\newtheorem{theorem}{Theorem}[section]
\newtheorem{lemma}[theorem]{Lemma}
\newtheorem{prop}[theorem]{Proposition}
\newtheorem*{Theorem1'}{Theorem 1'}
\theoremstyle{definition}
\theoremstyle{remark}
\renewcommand \H{{\mathrm{Hol}}}
\newcommand \Aut{{\mathrm{Aut}}}
\newcommand \Z{{\mathbb Z}}
\newcommand \N{{\mathbb N}}
\newcommand \GL{{\mathrm{GL}}}
\newcommand \Heis{{\mathrm{Heis}}}
\def\A{{\mathfrak{a}}}
\def\B{{\mathfrak{b}}}
\def\C{{\mathfrak{c}}}
\def\D{{\mathfrak{d}}}
\def\S{{\Sigma}}
\def\T{{\Psi}}
\def\U{{\Upsilon}}
\def\So{{\Sigma}_0}
\begin{document}

\title[the automorphism group of certain 3-generator $p$-groups]{the automorphism group of certain 3-generator $p$-groups}

\author{Fernando Szechtman}
\address{Department of Mathematics and Statistics, University of Regina, Canada}
\email{fernando.szechtman@gmail.com}
\thanks{This research was partially supported by NSERC grant RGPIN-2020-04062}

\subjclass[2020]{20D45, 20D15, 20D20}



\keywords{Automorphism group; $p$-group}

\begin{abstract} A presentation as well as a structural description of the automorphism group
of a family of 3-generator finite $p$-groups is given, $p$ being an odd prime.
\end{abstract}

\maketitle

\section{Introduction}

We fix a prime number $p$ throughout the paper. The automorphism groups of finite $p$-groups have been widely studied. One line of investigation was
related to the divisibility conjecture, to the effect that if $G$ is a finite noncyclic $p$-group of order larger than $p^2$, then
$|G|$ divides $|\mathrm{Aut}(G)|$. While this
holds true for several classes of $p$-groups, it has recently been shown to be false in general \cite{GJ}. A thorough survey of this conjecture can be found in \cite{Di}.
A second avenue of research is connected to the conjecture that every finite nonabelian $p$-group has a noninner automorphism of order
$p$. We refer the reader to \cite{G}
and references therein for the status of this conjecture. A third focus of attention has been the actual structure of $\mathrm{Aut}(G)$ when $G$ is a finite $p$-group of one type or another. This has been the case, in particular for metacyclic groups, as found in
\cite{BC,C,C2,GG,M,M2,ZL}, for instance.

The present paper gives a presentation as well as a structural description of the
automorphism group
of a family of 3-generator finite $p$-groups, with $p$ odd.

We assume henceforth that $p$ is odd and that $i,n\in\N$ satisfy $n\geq 2$ and $1\leq i\leq n-1$. Given an integer $d$
that is not a multiple of $p$, the inverse of $1+dp$ modulo $p^n$ is also of the form $1+ep$ for some
integer $e$ not divisible by $p$, so we have
\begin{equation}\label{eqrd}
p\nmid d,\; p\nmid e,
\end{equation}
\begin{equation}\label{eqrd2}
(1+dp)(1+ep)\equiv 1\mod p^n.
\end{equation}
Let $\S$ be the group with the following presentation, depending on whether $2i\leq n$ or~$2i\geq n$:
\begin{equation}
\label{sime}
\S=\langle \A,\B,\C\,|\, \A^{p^i}=\B^{p^{n-i}}=\C^{p^{n-1}}=1, [\A,\B]=\C^{p^{n-i-1}}, \C\A \C^{-1}=\A^{1+e p},
\C \B\C^{-1}=\B^{1+d p}\rangle,
\end{equation}
\begin{equation}
\label{sima}
\S=\langle \A,\B,\C\,|\, \A^{p^{n-i}}=\B^{p^{n-i}}=\C^{p^{n-1}}=1, [\A,\B]=\C^{p^{i-1}}, \C\A \C^{-1}=\A^{1+e p},
\C \B\C^{-1}=\B^{1+d p}\rangle.
\end{equation}
This is a quotient of the finite Wamsley group \cite{W} defined on 3 generators with 3 relations.

In this paper, we give a presentation of $\mathrm{Aut}(\S)$ and its normal Sylow $p$-subgroup $\Pi$,
and provide a structural description of these groups.

There is a noteworthy connection between $\S$ and automorphism groups. Let $\U$
be the semidirect product of a cyclic group $C_{p^n}$
of order $p^n$ by the unique subgroup of order $p^{n-i}$ of the cyclic group $\mathrm{Aut}(C_{p^n})$. Thus
\begin{equation}
\label{preshi}
\U=\langle x,y\,|\, x^{p^n}=1, y^{p^{n-i}}=1, yxy^{-1}=x^{1+p^i}\rangle.
\end{equation}
Then $\S$ is isomorphic to the normal Sylow $p$-subgroup of $\mathrm{Aut}(\U)$.
A presentation for $\mathrm{Aut}(G)$ when $G$ is a split metacyclic $p$-group can be found in \cite{BC}. This includes, in particular, the case when $G=\U$.

Consider the group $\T$ with the following presentation, depending on whether $2i\leq n$ or $2i\geq n$:
$$
\T=\langle
\A,\B,\D\,|\,
\A^{p^i}=\B^{p^{n-i}}=\D^{p^i}=1, [\A,\B]=\D, \A\D=\D\A,
\B\D=\D\B
\rangle,
$$
$$
\T=\langle
\A,\B,\D\,|\,
\A^{p^{n-i}}=\B^{p^{n-i}}=\D^{p^{n-i}}=1, [\A,\B]=\D, \A\D=\D\A,
\B\D=\D\B
\rangle.
$$
Then $\T$ is a $p$-group of Heisenberg type, and $\S$ is an extension of $\T$ by a cyclic factor.
Moreover, $\S$ cannot be generated by fewer than 3 elements, except in the extreme case $n=2$, when $\S$ is the Heisenberg group of order $p^3$. Furthermore, $\S$ is a nonsplit extension of $\T$, except when $n=2$, in which case $\S=\T$.

The fact that $\S$ requires 3 generators makes
the computation of $\mathrm{Aut}(\S)$ particularly challenging. An appreciation of the obstacles involved can perhaps be gleaned from the following:
$\Pi$ requires 6 generators and 21 relations when $2i\geq n$ but $i\neq n-1$, while if $2i<n$ we utilize
8 generators and 38 relations for this purpose (although we could manage with slightly fewer). We feel that the groups $\S$ constitute a nontrivial
addition to the reservoir of $p$-groups for which the automorphism group is known.

Our work begins in Section \ref{bast}, where we describe properties of $\S$ and $\T$.
Once this background information has been collected, Sections \ref{rojo1} and \ref{rojo2} describe $\mathrm{Aut}(\S)$ and $\Pi$. In Section \ref{rojo1} we assume
that $2i\geq n$; in this case, $\T$ is a characteristic subgroup of $\S$ and
our analysis of the image of the restriction homomorphism
$\mathrm{Aut}(\S)\to \mathrm{Aut}(\T)$ allows us in Theorem~\ref{mainresult} to obtain the desired presentations of
$\mathrm{Aut}(\S)$ and $\Pi$ when $i<n-1$. Considerable effort is spent in Theorem \ref{mainresult} in finding suitable generators for $\Pi$,
and we show that no smaller amount of generators is possible. In addition, Theorem \ref{mainresult} gives a presentation of 
$\mathrm{Out}(\S)$, and shows that $\Pi$ is the kernel of the canonical homomorphism  $\mathrm{Aut}(\S)\to \mathrm{Aut}(\S/\S^p)$. The case $i=n-1$
is essentially different and is handled in Theorem \ref{lindop}, which gives presentations of Bruhat type for $\mathrm{Aut}(\S)$
and $\mathrm{Aut}(\T)$. 
In Section \ref{rojo2} we suppose
that $2i<n$; this case is considerably harder, as $\T$ is not a characteristic subgroup of $\S$. Let $G$ be the subgroup of
$\mathrm{Aut}(\S)$ preserving~$\T$. Theorem~\ref{main2} shows that $\mathrm{Aut}(\S)=G\langle U\rangle$, where $U$ is a suitable automorphism of $\S$,
and finds generators for $G$ and hence for $\mathrm{Aut}(\S)$. Presentations and further descriptions of $\mathrm{Aut}(\S)$ and
$\Pi$ can be found in Theorem \ref{main3} (which omits some relations from the case $n=2i+1$).
It turns out that $\Pi=\Phi\langle U\rangle$, where $\Phi$ is the kernel of the canonical homomorphism
$\mathrm{Aut}(\S)\to \mathrm{Aut}(\S/\S^p)$. Moreover, $\Pi$ requires at least 7 generators. 

An appendix contains a study of $\mathrm{Aut}(\U)$, independent of \cite{BC},
and establishes the foregoing connection between $\S$ and $\mathrm{Aut}(\U)$. In particular, we somewhat simplify the presentation
of $\mathrm{Aut}(\U)$ derived in \cite{BC} for an arbitrary
split metacyclic $p$-group.


Given a finite $p$-group $P$, we write $z(P)$ for the minimum number of generators of $P$, that is, $z(P)$
is the dimension of the $\Z/p\Z$-vector space $P/\Phi(P)$, where $\Phi(P)$ is the  Frattini subgroup of~$P$.
If $G$ is an arbitrary finite group, we let $s(G)$ denote a Sylow $p$-subgroup
of $G$. Thus $z(\U)=2$, $s(\mathrm{Aut}(\U))=\S$ and $z(\S)=3$ when $n>2$; also
$s(\mathrm{Aut}(\S))=\Pi$ with $z(\Pi)=6$ if $2i\geq n$, $i\neq n-1$, and $z(\Pi)=7$ if $2i<n$.
On the other hand, if $P$ is an elementary abelian group of order $p^m$, then $z(P)=m$ and $z(s(\mathrm{Aut}(P)))=m-1$.
Prompted by these examples and others, and inspired by the divisibility conjecture, we wonder if it might be of interest to determine what classes of finite $p$-groups $G$ satisfy $z(s(\mathrm{Aut}(G)))/z(G)\geq 1$, including further information about this ratio.

Given a group $G$, we write
$$
[a,b]=aba^{-1}b^{-1},\quad a,b\in G,
$$
and let $\delta:G\to \mathrm{Inn}(G)$ stand for the homomorphism
$a\mapsto \delta_a$,
where $$\delta_a(b)={}^{a} b=aba^{-1},\quad a,b\in G.$$
The image of $\gamma\in \mathrm{Aut}(G)$ under the natural projection $\mathrm{Aut}(G)\to \mathrm{Out}(G)$
will be denoted by $\overline{\gamma}$.
Observe that
$$
[ab,c]={}^{a}[b,c][a,c],\; [a,b]^{-1}=[b,a],\; [c,ab]=[c,a]\, {}^{a}[c,b].
$$
In particular, if $[G,G]$ is included in the center of $G$, then the bracket
is a group homomorphism in each variable. We will repeatedly and implicitly use these facts throughout the paper.

\section{Background material on $\S$ and $\T$}\label{bast}

\begin{prop}\label{u}  The group $\S$ cannot be generated by fewer than 3 elements, unless $n=2$. 
\end{prop}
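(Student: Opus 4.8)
The plan is to compute the minimal number of generators via the Burnside basis theorem, which identifies $z(\S)$ with $\dim_{\Z/p\Z}\S/\Phi(\S)$, where $\Phi(\S)=\S^p[\S,\S]$. Since $\A,\B,\C$ generate $\S$ we have $z(\S)\le 3$ for free, so the whole content is the reverse inequality $z(\S)\ge 3$ whenever $n>2$. The cleanest route to this lower bound is to produce a surjection from $\S$ onto the elementary abelian group $(\Z/p\Z)^3$: the minimal number of generators cannot increase under passage to a quotient, so a single such surjection forces $z(\S)\ge 3$.

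Concretely, I would define $\phi\colon\S\to(\Z/p\Z)^3$ by sending $\A,\B,\C$ to the three standard basis vectors $e_{\A},e_{\B},e_{\C}$ (writing the target additively). By von Dyck's theorem it suffices to check that each defining relator of $\S$ is annihilated by $\phi$. The power relators $\A^{p^i},\B^{p^{n-i}},\C^{p^{n-1}}$ (and their analogues when $2i\ge n$) map to $0$ because their exponents are multiples of $p$, all the relevant exponents $i,\,n-i,\,n-1$ being at least $1$ under the standing hypotheses $n\ge 2$ and $1\le i\le n-1$. The two conjugation relators collapse as well: in the abelian target conjugation is trivial, while $\A^{1+ep}\mapsto(1+ep)e_{\A}=e_{\A}$ since $p\,e_{\A}=0$, and likewise for $\B$. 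Finally $[\A,\B]\mapsto 0$, being a commutator in an abelian group.

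The single relator that requires $n>2$ is the commutator relation $[\A,\B]=\C^{p^{n-i-1}}$ (when $2i\le n$) or $[\A,\B]=\C^{p^{i-1}}$ (when $2i\ge n$). For $\phi$ to respect it, the right-hand side must also vanish, i.e. the exponent of $p$ on $\C$ must be at least $1$. This is exactly where the hypothesis enters. If $2i\le n$ and $n>2$, then $i\le n/2<n-1$ forces $n-i-1\ge 1$; if $2i\ge n$ and $n>2$, then $2i\ge n>2$ forces $i\ge 2$, hence $i-1\ge 1$. In either regime $\C^{p^{n-i-1}}$, respectively $\C^{p^{i-1}}$, is a genuine $p$-th power, so it maps to $0$ and $\phi$ is well defined. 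Its image contains $e_{\A},e_{\B},e_{\C}$, so $\phi$ is onto and $z(\S)\ge 3$, which combined with $z(\S)\le 3$ settles the claim.

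The step I expect to be the crux is verifying this last relation, since it is the only place where the exponent of $\C$ can drop to $p^{0}=1$. Precisely when $n=2$ one is forced into $i=1$ in both regimes, the right-hand side becomes $\C$ itself, and the relation reads $\C=[\A,\B]\in[\S,\S]$; the map onto $(\Z/p\Z)^3$ then breaks down because $\C$ is no longer independent modulo $\Phi(\S)$. This is consistent with $\S$ being the $2$-generated Heisenberg group of order $p^3$ in that case, and confirms that the hypothesis $n>2$ is not merely a convenience but the exact threshold. Everything else in the argument is routine bookkeeping on the exponents appearing in the relators.
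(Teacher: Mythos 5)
Your proof is correct and follows essentially the same route as the paper: both construct an epimorphism from $\S$ onto an elementary abelian group of order $p^3$ by sending $\A,\B,\C$ to a basis, and conclude since a quotient cannot need more generators than the group. The only difference is that you spell out the von Dyck verification (in particular that $n>2$ forces $n-i-1\geq 1$, resp.\ $i-1\geq 1$, so the relator $[\A,\B]=\C^{p^{n-i-1}}$, resp.\ $\C^{p^{i-1}}$, dies in the quotient), which the paper leaves implicit.
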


\begin{proof} Let $E$ be an elementary abelian $p$-group of order $p^3$ and let $a,b,c$ be generators of $E$.
Consider the assignment
\begin{equation}
\label{ass}
\A\mapsto a,\; \B\mapsto b,\; \C\mapsto c.
\end{equation}
The given presentation of $\S$ ensures that, except when $n=2$, we can extend (\ref{ass}) to an
epimorphism $\S\to E$. Since $E$ cannot be generated by fewer than 3 elements, neither can $\S$.
\end{proof}

\begin{prop}\label{v} The derived subgroup of $\S$ is generated by $\A^p,\B^p,\C^{p^{n-i-1}}$ if $2i\leq n$, and
by $\A^p,\C^p,\C^{p^{i-1}}$ if $2i\geq n$.
\end{prop}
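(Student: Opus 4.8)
The plan is to exhibit a concrete subgroup $H$, show that $H\subseteq[\S,\S]$, that $H$ is normal in $\S$, and that $\S/H$ is abelian; the last two facts force $[\S,\S]\subseteq H$, and combined with the first they give $H=[\S,\S]$.

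First I would read off the three basic commutators from the defining relations (\ref{sime}) and (\ref{sima}). The relations $\C\A\C^{-1}=\A^{1+ep}$ and $\C\B\C^{-1}=\B^{1+dp}$ give $[\C,\A]=\A^{ep}$ and $[\C,\B]=\B^{dp}$; since $p\nmid e$ and $p\nmid d$ by (\ref{eqrd}) and $\A,\B$ have $p$-power order, $e$ and $d$ are units modulo those orders, so $\langle[\A,\C]\rangle=\langle\A^p\rangle$ and $\langle[\B,\C]\rangle=\langle\B^p\rangle$. The remaining relation gives $[\A,\B]=\C^{p^{n-i-1}}$ when $2i\leq n$ and $[\A,\B]=\C^{p^{i-1}}$ when $2i\geq n$. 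Taking $H$ to be the subgroup generated by $\A^p$, $\B^p$ and this power of $\C$, each generator of $H$ is a power of a basic commutator, so $H\subseteq[\S,\S]$.

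The step I expect to be the main obstacle is proving that $[\A,\B]$ is central in $\S$. Writing $m$ for the exponent with $[\A,\B]=\C^{p^{m}}$ (so $m=n-i-1$ or $m=i-1$), I would compute $[\C^{p^{m}},\A]=\A^{(1+ep)^{p^{m}}-1}$ and invoke the congruence $(1+ep)^{p^{m}}\equiv 1+ep^{m+1}\pmod{p^{m+2}}$, which holds because $p$ is odd. This makes the exponent $(1+ep)^{p^{m}}-1$ divisible by $p^{m+1}$; since $m+1=n-i$ when $2i\leq n$ (resp.\ $m+1=i$ when $2i\geq n$) and the order of $\A$ is $p^{i}$ (resp.\ $p^{n-i}$), the order of $\A$ divides $p^{m+1}$, whence $\A^{(1+ep)^{p^{m}}-1}=1$. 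The identical computation with $\B$ and $d$ gives $[\C^{p^{m}},\B]=1$, and $[\C^{p^{m}},\C]=1$ is clear, so $\C^{p^{m}}=[\A,\B]$ is central.

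Granting centrality, I would verify that $H$ is normal in $\S$ by conjugating its generators by $\A,\B,\C$. Conjugation by $\C$ sends $\A^p$ and $\B^p$ back into $\langle\A^p\rangle$ and $\langle\B^p\rangle$ through the conjugation relations, and fixes $\C^{p^{m}}$. Conjugation of $\B^p$ by $\A$ gives $(\A\B\A^{-1})^p=([\A,\B]\B)^p=[\A,\B]^p\B^p$ by centrality, which lies in $H$; conjugation of $\A^p$ by $\B$ is symmetric, and $\C^{p^{m}}$ is central. Thus $H$ is normal. Finally, all three basic commutators lie in $H$, so $\S/H$ is abelian and $[\S,\S]\subseteq H$. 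Together with $H\subseteq[\S,\S]$ this yields $H=[\S,\S]$, which is the assertion.
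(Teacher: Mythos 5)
Your proposal is correct and follows exactly the paper's strategy: let $H$ be the subgroup generated by the stated elements, then show $H\subseteq[\S,\S]$, $H$ normal in $\S$, and $\S/H$ abelian, forcing $H=[\S,\S]$. The paper's own proof is a one-line version of this argument, and your write-up simply supplies the verifications it leaves implicit (notably the centrality of $[\A,\B]$, used for normality), while also silently correcting the statement's evident typo $\C^p$ in place of $\B^p$ in the $2i\geq n$ case.
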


\begin{proof} Let $T$ be the subgroup of $\S$ generated by the stated elements. The given presentation of $\S$ and (\ref{eqrd})
ensure that $T\subseteq [\S,\S]$, $T$ is normal in $\S$, and
$\S/T$ is abelian, whence $T=[\S,\S]$. 
\end{proof}

Given a ring $R$ with $1\neq 0$ and a right module $M$, the Heisenberg group $\Heis(R,M)$
consists of all matrices
$$
\left(\begin{array}{ccc}
1 & u & v\\
0 & 1 & r\\
0 & 0 & 1
\end{array}\right),\quad r\in R, u,v\in M,
$$
under the usual matrix multiplication, with the understanding with $1\cdot u=u$ for $u\in M$. If $M=R$ we simply write
$\Heis(R)=\Heis(R,M)$.

\begin{prop}\label{heipres} We have $\T\cong\Heis(\Z/p^{n-i}\Z,\Z/p^i\Z)$
if $2i\leq n$ and $\T\cong\Heis(\Z/p^{n-i}\Z)$ if $2i\geq n$. In particular, $\T$ has order $p^{n+i}$ if $2i\leq n$
and $p^{3(n-i)}$ if $2i\geq n$, and any element of $\T$ can be written in one and only one way in the form $ABD$, where
$A\in\langle\A\rangle$, $B\in\langle\B\rangle$, and $D\in\langle\D\rangle$
\end{prop}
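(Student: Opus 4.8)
The plan is to realize the claimed isomorphism as a concrete map out of $\T$ built from its presentation, establish surjectivity by hand, and then pin down injectivity by an independent order count; the uniqueness of the $\A^a\B^b\D^c$ form will fall out of the same count. Throughout I treat both cases together, writing $R=\Z/p^{n-i}\Z$ and $M=\Z/p^i\Z$ when $2i\le n$ (note that $n-i\ge i$ guarantees $p^{n-i}M=0$, so $M$ really is an $R$-module) and $M=R=\Z/p^{n-i}\Z$ when $2i\ge n$.

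First I would define a homomorphism $\phi\colon \T\to\Heis(R,M)$ using the universal property of the given presentation. Send $\A,\B,\D$ to the unipotent matrices with data $(u,v,r)$ equal to $(1,0,0)$, $(0,0,1)$, $(0,1,0)$ respectively, and verify that every defining relation of $\T$ holds for these images. The order relations reduce to the fact that $p^{i}$ (resp. $p^{n-i}$) kills the relevant coordinate in $M$ or $R$; the centrality relations $\A\D=\D\A$ and $\B\D=\D\B$ are immediate since the image of $\D$ lies in the center of $\Heis(R,M)$; and a direct two-line computation gives $[\phi(\A),\phi(\B)]=\phi(\D)$. Hence $\phi$ is well defined.

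For surjectivity I would compute $\phi(\A^{a}\B^{b}\D^{c})$ explicitly: multiplying the three images yields the matrix with data $(a,\,ab+c,\,b)$. Given an arbitrary target $(u,v,r)$, setting $a=u$, $b=r$, $c=v-ur$ produces it, so $\phi$ is onto. This same formula shows that $(a,b,c)\mapsto(a,ab+c,b)$ is a bijection between the index set of residues and $\Heis(R,M)$, a fact I will reuse for uniqueness.

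The hard part, and the only place where real content lies, is bounding $\abs{\T}$ from above, since a presentation by itself yields only such a bound and surjectivity alone cannot force injectivity. Here I would run a collection argument inside $\T$: because $\D$ is central and $[\A,\B]=\D$ gives $\B\A=\A\B\D^{-1}$, any word in the generators can be rewritten by moving every $\A$ to the left and every $\D$ to the right, landing in the form $\A^{a}\B^{b}\D^{c}$; the order relations then confine the exponents, so $\abs{\T}\le p^{i}\,p^{\,n-i}\,p^{i}=p^{n+i}$ when $2i\le n$ and $\abs{\T}\le p^{3(n-i)}$ when $2i\ge n$. Since $\abs{\Heis(R,M)}=\abs{R}\,\abs{M}^{2}$ equals exactly this number, the surjection $\phi$ must be an isomorphism and $\abs{\T}$ equals the stated order. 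Finally, existence of the $\A^{a}\B^{b}\D^{c}$ form comes from the collection step, and its uniqueness follows because $\phi$ is injective while the images $(a,ab+c,b)$ are pairwise distinct; this yields the ``one and only one way'' assertion.
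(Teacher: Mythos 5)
Your proposal is correct and follows essentially the same route as the paper: both map $\A,\B,\D$ to the same unipotent matrices via the presentation, bound $\abs{\T}$ from above by the order of the Heisenberg group using the collected form $\A^{a}\B^{b}\D^{c}$, and conclude that the resulting epimorphism is an isomorphism, with uniqueness of the normal form following from the order count. You merely make explicit the details the paper compresses (the collection argument behind $\T=\langle\A\rangle\langle\B\rangle\langle\D\rangle$ and the surjectivity computation $(a,b,c)\mapsto(a,ab+c,b)$).
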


\begin{proof} Note that
the annihilator of the $\Z$-module $\Z/p^i\Z$ is $p^i\Z$; if $2i\leq n$ then $p^{n-i}\Z$ is contained in~$p^i\Z$, so $\Z/p^i\Z$ becomes a $\Z/p^{n-i}\Z$-module. The given presentation of $\T$ yields the decomposition $\T=\langle\A\rangle\langle\B\rangle\langle\D\rangle$,
and ensures that the assignment
$$
\A\mapsto \left(\begin{array}{ccc}
1 & 1 & 0\\
0 & 1 & 0\\
0 & 0 & 1
\end{array}\right),\; \B\mapsto \left(\begin{array}{ccc}
1 & 0 & 0\\
0 & 1 & 1\\
0 & 0 & 1
\end{array}\right),\; \D\mapsto \left(\begin{array}{ccc}
1 & 0 & 1\\
0 & 1 & 0\\
0 & 0 & 1
\end{array}\right)
$$
extends to a group epimorphism $\T\to \Heis(\Z/p^{n-i}\Z,\Z/p^i\Z)$ if $2i\leq n$ and
$\T\to\Heis(\Z/p^{n-i}\Z)$ if $2i\geq n$. It is an isomorphism as the presentation of $\T$
bounds its order, from above, by the order of the corresponding Heisenberg group. Uniqueness
follows from existence and $|\T|$.
\end{proof}

\begin{lemma}
\label{uno} Let $a,b,c,\ell\in\Z$, where $a,c\geq 1$ and $b\geq 0$. Then
\begin{equation}
\label{abc}
(1+\ell p^a)^{c p^b}\equiv 1+c \ell p^{a+b}\mod p^{2a+b}.
\end{equation}
\end{lemma}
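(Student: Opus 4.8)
The plan is to induct on $b$, turning the whole statement into repeated $p$-th powering, where at each stage the $p$-adic valuation of the ``error'' beyond $1+c\ell p^{a+b}$ climbs by exactly one.

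First I would dispose of the base case $b=0$, where the claim reads $(1+\ell p^a)^{c}\equiv 1+c\ell p^a\mod p^{2a}$. Expanding by the binomial theorem,
$$
(1+\ell p^a)^{c}=1+c\ell p^a+\sum_{k=2}^{c}\binom{c}{k}\ell^k p^{ak},
$$
and since $ak\geq 2a$ for every $k\geq 2$, the entire sum is divisible by $p^{2a}$, which settles the base case.

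For the inductive step, suppose the congruence holds for some $b\geq 0$ and write $(1+\ell p^a)^{cp^b}=1+w$ with $w=c\ell p^{a+b}+Mp^{2a+b}$ for some integer $M$. Factoring gives $w=p^{a+b}(c\ell+Mp^a)$, so $p^{a+b}\mid w$. Raising to the $p$-th power and expanding,
$$
(1+\ell p^a)^{cp^{b+1}}=(1+w)^p=1+pw+\sum_{k=2}^{p-1}\binom{p}{k}w^k+w^p.
$$
The linear term contributes $pw=c\ell p^{a+b+1}+Mp^{2a+b+1}\equiv c\ell p^{a+b+1}\mod p^{2a+b+1}$, which is exactly the desired main term. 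It then remains to show that every other summand is divisible by $p^{2a+b+1}$, and the induction closes.

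The main obstacle, and the only place where the hypotheses $p$ odd and $a\geq 1$ are genuinely used, is controlling those remaining terms. For $2\leq k\leq p-1$ one has $p\mid\binom{p}{k}$ and $p^{k(a+b)}\mid w^k$, so such a term has valuation at least $1+k(a+b)\geq 1+2(a+b)\geq 2a+b+1$, the last step using $b\geq 0$. For the top term $w^p$, the valuation is at least $p(a+b)\geq 3(a+b)=3a+3b$, and $3a+3b\geq 2a+b+1$ is equivalent to $a+2b\geq 1$, which holds since $a\geq 1$; here the oddness of $p$, giving $p\geq 3$, is precisely what furnishes the factor $3$. Once these two valuation bounds are in place the inductive step is complete, and the congruence follows for all $b\geq 0$.
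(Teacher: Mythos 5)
Your proof is correct and takes essentially the same route as the paper's: induction on $b$ via the binomial theorem, with $p$-adic valuation bounds on the higher-order terms and the oddness of $p$ used to push the error past $p^{2a+b+1}$. The only difference is organizational — the paper first runs the induction with $c=1$ and then absorbs general $c$ by one extra binomial expansion at the end, whereas you fold general $c$ into the base case $b=0$ and carry it through the induction, locating the use of $p\geq 3$ in the top term $w^p$ rather than in $p\mid\binom{p}{2}$ — and both versions are equally valid.
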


\begin{proof} We assume first that $c=1$ and show (\ref{abc}) by induction on $b$. If $b=0$ there is nothing to do.
Suppose (\ref{abc}) is true for some $b\geq 0$. Then there exists $s\in\Z$ such that
$$
(1+\ell p^a)^{p^b}=1+\ell p^{a+b}+s p^{2a+b}=1+p^{a+b}(\ell+sp^a).
$$
Set $f=\ell+sp^a$. Then
$$
(1+\ell p^a)^{p^{b+1}}=(1+p^{a+b}f)^p=1+p^{a+b+1}f+{{p}\choose{2}} p^{2(a+b)}f^2+\cdots+{{p}\choose{p}} p^{p(a+b)}f^p.
$$
Since $p$ is odd, we have $p|{{p}\choose{2}}$, so there is some $k\in\Z$ such that
$$
{{p}\choose{2}} p^{2(a+b)}f^2=p^{2a+2b+1}k.
$$
Moreover, since $a\geq 1$, we have $i(a+b)\geq 2a+b+1$ for all $3\leq i\leq p$. Therefore,
$$
(1+\ell p^a)^{p^{b+1}}\equiv 1+p^{a+b+1}(\ell+sp^a)\equiv 1+\ell p^{a+(b+1)}\mod p^{2a+(b+1)}.
$$
This proves (\ref{abc}) when  $c=1$. Now if $c\in\N$ is arbitrary then the previous case yields
$$
(1+\ell p^a)^{cp^b}=(1+p^{a+b}(\ell+hp^a))^c
$$
for some $h\in\Z$. Since $i(a+b)\geq 2a+b$ for all $i\geq 2$, the binomial expansion implies
$$
(1+p^{a+b}(\ell+hp^a))^c\equiv 1+cp^{a+b}(\ell+hp^a)\equiv 1+c\ell p^{a+b}\mod p^{2a+b}.
$$
This demonstrates (\ref{abc}) in general.
\end{proof}

\begin{prop}\label{heipres2} The group $\S$ is an extension of $\T$ by $C_{p^{n-i-1}}$
if $2i\leq n$ and by $C_{p^{i-1}}$ if $2i\geq n$. In particular, $\S$ has order $p^{2n-1}$ if $2i\leq n$
and $p^{3n-2i-1}$ if $2i\geq n$, and any element of $\S$ can be written in one and only one way in the form $ABC$, where
$A\in\langle\A\rangle$, $B\in\langle\B\rangle$, and $C\in\langle\C\rangle$.
\end{prop}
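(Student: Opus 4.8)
The plan is to pin down $|\S|$ by squeezing it between a combinatorial upper bound coming directly from the presentation and the order of an explicit model group onto which $\S$ surjects; the extension structure and the uniqueness of the normal form then fall out by counting. Throughout I would treat the case $2i\leq n$, the case $2i\geq n$ being entirely analogous after replacing $\C^{p^{n-i-1}}$ by $\C^{p^{i-1}}$ and adjusting the exponents accordingly.

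First I would establish the existence half of the normal form. The relation $[\A,\B]=\C^{p^{n-i-1}}$ together with the conjugation relations $\C\A\C^{-1}=\A^{1+ep}$ and $\C\B\C^{-1}=\B^{1+dp}$ provide rewriting rules $\B\A=\A\B\C^{-p^{n-i-1}}$, $\C\A=\A^{1+ep}\C$ and $\C\B=\B^{1+dp}\C$ that allow any word in $\A,\B,\C$ to be collected into the form $\A^x\B^y\C^z$; since $\A^{p^i}=\B^{p^{n-i}}=\C^{p^{n-1}}=1$, one may take $0\leq x<p^i$, $0\leq y<p^{n-i}$ and $0\leq z<p^{n-1}$. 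This shows every element of $\S$ has the asserted shape and yields the upper bound $|\S|\leq p^{2n-1}$.

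For the matching lower bound I would exhibit a group of order exactly $p^{2n-1}$ onto which $\S$ surjects. Using Proposition \ref{heipres} to identify $\T$, I would first check that $\A\mapsto\A^{1+ep}$, $\B\mapsto\B^{1+dp}$ defines an automorphism $\theta$ of $\T$: the defining relations of $\T$ are preserved, the essential point being that $[\A^{1+ep},\B^{1+dp}]=\D^{(1+ep)(1+dp)}=\D$, because $(1+dp)(1+ep)\equiv 1\bmod p^n$ by (\ref{eqrd2}) and $\D$ has order $p^i$. A computation with Lemma \ref{uno} shows that $1+ep$ and $1+dp$ have multiplicative orders $p^{i-1}$ and $p^{n-i-1}$ modulo the orders of $\A$ and $\B$, so $\theta$ has order $p^{n-i-1}$, which divides $p^{n-1}$. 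I would then form $\widehat{\S}=\T\rtimes\langle c\rangle$ with $\langle c\rangle\cong C_{p^{n-1}}$ acting through $\theta$, observe that both $\D$ and $c^{p^{n-i-1}}$ are central in $\widehat{\S}$, and set $M=\widehat{\S}/Z$ with $Z=\langle c^{p^{n-i-1}}\D^{-1}\rangle$. Since $\langle c\rangle\cap\T=1$, the element $c^{p^{n-i-1}}\D^{-1}$ has order $p^i$, whence $|Z|=p^i$ and $|M|=p^{n+i}p^{n-1}/p^i=p^{2n-1}$. In $M$ the images of $\A,\B,c$ satisfy all the defining relations of $\S$, in particular $c^{p^{n-i-1}}=\D=[\A,\B]$, so the assignment $\A\mapsto\A$, $\B\mapsto\B$, $\C\mapsto c$ extends to an epimorphism $\S\to M$, giving $|\S|\geq p^{2n-1}$.

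Combining the two bounds yields $|\S|=p^{2n-1}$ and $\S\cong M$. The number of formal products $\A^x\B^y\C^z$ in the allowed ranges is exactly $p^i\cdot p^{n-i}\cdot p^{n-1}=p^{2n-1}=|\S|$, so the existence of the normal form forces its uniqueness. Finally, since $Z\cap\T=1$, the image of $\T$ in $M\cong\S$ is a normal subgroup isomorphic to $\T$, and the quotient is cyclic, generated by the image of $c$ and of order $p^{n-i-1}$; this exhibits $\S$ as an extension of $\T$ by $C_{p^{n-i-1}}$. I expect the main obstacle to be the construction and verification of the model $M$ — specifically, confirming that $\theta$ is a well-defined automorphism of $\T$ of the correct order and that $|Z|=p^i$ — since this is exactly where the arithmetic of (\ref{eqrd2}) and Lemma \ref{uno} must be invoked carefully; the collection argument underlying the upper bound, though tedious, is routine.
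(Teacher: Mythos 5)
Your proposal is correct, and its skeleton matches the paper's: bound $|\S|$ above by $p^{2n-1}$ (resp.\ $p^{3n-2i-1}$) using the presentation, bound it below by exhibiting a concrete extension of $\T$ realizing the relations, and deduce the extension structure and uniqueness of the normal form by counting. The genuine difference is in how the model extension is produced. The paper takes the automorphism $\sigma$ of $\T$ given by $\A\mapsto\A^{1+ep}$, $\B\mapsto\B^{1+dp}$, $\D\mapsto\D$, observes via Lemma \ref{uno} that $\sigma^{p^{j}}=1_\T=\delta_\D$ and $\sigma(\D)=\D$ (with $j=n-i-1$ or $i-1$ according to the case), and then quotes the general theory of cyclic extensions \cite[Chapter III, Section 7]{Z} to obtain a group $G=\T\langle\C\rangle$ with cyclic factor $C_{p^j}$ in which $\C$ acts via $\sigma$ and $\C^{p^j}=\D$. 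You instead construct this group by hand as a central quotient of a split extension, $M=(\T\rtimes \langle c\rangle)/\langle c^{p^{n-i-1}}\D^{-1}\rangle$ with $\langle c\rangle\cong C_{p^{n-1}}$ acting through $\theta=\sigma$, the quotient gluing $c^{p^{n-i-1}}$ to $\D$. Your verifications — that $\theta$ is an automorphism of $\T$ of order $p^{n-i-1}$, that $c^{p^{n-i-1}}\D^{-1}$ is central of order $p^i$ so that $|M|=p^{2n-1}$, and that $M/\T\cong C_{p^{n-i-1}}$ — are all correct, and the case $2i\geq n$ does go through as claimed with adjusted exponents (there one needs $\theta^{p^{i-1}}=1$, i.e.\ $n-i-1\leq i-1$, which is exactly that hypothesis). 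What your route buys is self-containedness: it proves, in the special case needed, the cyclic-extension existence theorem that the paper invokes as a black box; the cost is the extra page of verification that the citation makes unnecessary. You are also more explicit than the paper about the upper bound, which the paper asserts simply as $|\S|\leq|G|$ without spelling out the collection argument.
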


\begin{proof} By (\ref{eqrd}) and (\ref{eqrd2}) there is an automorphism $\sigma$ of $\T$ such that
$\A\mapsto \A^{1+ep}$, $\B\mapsto \B^{1+dp}$ and $\D\mapsto \D$. Setting $j=n-i-1$ if $2i\leq n$ and $j=i-1$
if $2i\geq n$, Lemma \ref{uno} implies that
$\sigma^{p^j}=1_\T=\delta_\D$. As $\sigma(\D)=\D$, there is an
extension $G=\T\langle \C\rangle$ of $\T$ with cyclic factor $C_{p^j}$ and such that conjugation by $\C$ acts on $\T$
via $\sigma$ (see \cite[Chapter III, Section 7]{Z}). The given presentation of $\S$ yields an epimorphism
$\S\to G$, which is an isomorphism as $|\S|\leq |G|$. That $\S=\langle\A\rangle\langle\B\rangle\langle\C\rangle$
follows from $G=\T\langle \C\rangle$ and Proposition \ref{heipres}, while uniqueness follows from existence and the given order of $\S$.
\end{proof}

We henceforth view $\T$ as a normal subgroup of $\S$, as indicated in (the proof of) Proposition~\ref{heipres2}.

\begin{lemma} \label{potpq} Let $A\in\langle\A\rangle$, $B\in\langle\B\rangle$, $D\in\langle\D\rangle$,
and $C\in\langle\C\rangle$. Then
$$
(ABDC)^p=A_0B_0D_0C^p,
$$
for some $A_0\in\langle\A^p\rangle$, $B_0\in\langle\B^p\rangle$, and $D_0\in\langle\D^p\rangle$.
\end{lemma}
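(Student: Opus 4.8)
The plan is to exploit the semidirect-product structure established in Proposition~\ref{heipres2}: set $T=ABD\in\T$, so that $ABDC=TC$, and let $\phi$ denote conjugation by $C$, an automorphism of the normal subgroup $\T$. First I would push the copies of $C$ to the right using $C^k t=\phi^k(t)\,C^k$ for $t\in\T$, obtaining
$$
(TC)^p=\Big(\prod_{k=0}^{p-1}\phi^k(T)\Big)C^p ,
$$
which already isolates the factor $C^p$. It then remains to show that the $\T$-part $P:=\prod_{k=0}^{p-1}\phi^k(T)$ has the form $A_0B_0D_0$ with $A_0\in\langle\A^p\rangle$, $B_0\in\langle\B^p\rangle$, $D_0\in\langle\D^p\rangle$, after which the lemma follows.

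Next I would record the action of $\phi$. Writing $C=\C^m$, the defining relations of $\S$ give $\phi(\A)=\A^{u}$ and $\phi(\B)=\B^{v}$ with $u=(1+ep)^m$ and $v=(1+dp)^m$, while $\phi(\D)=\D$ since $C$ and $\D$ are both powers of $\C$; thus $\phi$ fixes $\langle\D\rangle$ pointwise. The one arithmetic fact I need is the trivial congruence $u\equiv v\equiv 1\bmod p$, coming from $1+ep\equiv 1+dp\equiv 1\bmod p$. Writing $A=\A^a$ and $B=\B^b$, this yields $\phi^k(T)=\A^{au^k}\B^{bv^k}D$ for each $k$.

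I would then carry out the collection process inside $\T$. Since $\langle\D\rangle=[\T,\T]$ is central (Proposition~\ref{heipres}), I first pull all $p$ central copies of $D$ to the right, giving a factor $D^p\in\langle\D^p\rangle$, and then move every $\A$-power to the left past every $\B$-power. As the bracket is bilinear with values in the central $\langle\D\rangle$ and $[\B^{s},\A^{t}]=\D^{-st}$, this produces
$$
P=\Big(\prod_{k=0}^{p-1}\A^{au^k}\Big)\Big(\prod_{k=0}^{p-1}\B^{bv^k}\Big)\,\D^{-ab\sum_{0\le j<k\le p-1}u^kv^j}\,D^p .
$$
The $\A$-part equals $\A^{a\sum_k u^k}$ and the $\B$-part equals $\B^{b\sum_k v^k}$; since $u\equiv v\equiv 1\bmod p$ each exponent-sum is $\equiv p\equiv 0\bmod p$, so these factors lie in $\langle\A^p\rangle$ and $\langle\B^p\rangle$ respectively.

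The step I expect to be the genuine obstacle is controlling the commutator correction $\D^{c}$ with $c=-ab\sum_{0\le j<k\le p-1}u^kv^j$, and this is where the hypothesis that $p$ is odd enters. Each summand satisfies $u^kv^j\equiv 1\bmod p$, and the number of index pairs is $\binom{p}{2}=p\cdot\tfrac{p-1}{2}$, which is divisible by $p$ exactly because $p$ is odd; hence $p\mid c$ and $\D^{c}\in\langle\D^p\rangle$. Combining $\D^{c}$ with the earlier $D^p$ gives the required $D_0\in\langle\D^p\rangle$, and $P=A_0B_0D_0$ as claimed. The care needed here is purely bookkeeping: making sure the collection yields precisely $\binom{p}{2}$ central commutators, so that oddness of $p$ can force the divisibility — note that for $p=2$ the count $\binom{2}{2}=1$ is odd, consistent with the standing assumption that $p$ is odd.
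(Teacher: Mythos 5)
Your proof is correct and follows essentially the same route as the paper: both write $(ABDC)^p$ as $\bigl(\prod_{k}\phi^k(ABD)\bigr)C^p$, collect the $\A$- and $\B$-powers using centrality of $\langle\D\rangle=[\T,\T]$, and conclude via $u\equiv v\equiv 1\bmod p$ together with $\sum_{j<k}u^kv^j\equiv\binom{p}{2}\equiv 0\bmod p$, which is where oddness of $p$ enters in both arguments. The only difference is notational (your $\phi$-and-double-sum bookkeeping versus the paper's explicit product $A^{1+j+\cdots+j^{p-1}}B^{1+k+\cdots+k^{p-1}}D^pD_1$), not mathematical.
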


\begin{proof} We have
$$
(ABDC)^p=w C^p,
$$
where
$$
w=AB\cdot {}^{C}\!(AB)\cdot {}^{C^2}\!(AB)\cdots  {}^{C^{p-1}}\!(AB)D^p.
$$
Now $C=\C^r$, with $r\in\N$, so setting
$$
j=(1+ep)^r,\; k=(1+dp)^r,
$$
where $d$ and $e$ are as in (\ref{eqrd}) and (\ref{eqrd2}), the defining relations of $\S$ then give
$$
w=ABA^jB^kA^{j^2}B^{k^2}\cdots A^{j^{p-1}}B^{k^{p-1}}D^p.
$$
Recalling the comments on commutators made in the Introduction, we find that
$$
w=A^{1+j+\cdots+j^{p-1}}B^{1+k+\cdots+k^{p-1}}D^p D_1,
$$
where
$$
D_1=[B,A]^j [B,A]^{(1+k)j^2}\cdots [B,A]^{(1+k+\cdots+k^{p-2})j^{p-1}}.
$$
Since
$$
j\equiv 1\equiv k\mod p,
$$
we have
$$
1+j+\cdots+j^{p-1}\equiv 0\equiv 1+k+\cdots+k^{p-1}\mod p
$$
and
$$
j+(1+k)j^2+\cdots+(1+k+\cdots+k^{(p-2)})j^{p-1}\equiv 1+2+\cdots+(p-1)\equiv 0\mod p.
$$
Thus $w=A_0B_0D_0$, with $A_0\in\langle\A^p\rangle$, $B_0\in\langle\B^p\rangle$, and $D_0\in\langle\D^p\rangle$.
\end{proof}

If $G$ is a group, then $G^p$ stands for the subgroup of $G$ generated by all elements $g^p$, with $g\in G$.

\begin{prop}\label{potp} Suppose $n\neq 2$ and let $v\in\S^p$. Then
$v=ABC$ for unique elements
$A\in\langle\A^p\rangle$, $B\in\langle\B^p\rangle$, and $C\in\langle\C^p\rangle$.
\end{prop}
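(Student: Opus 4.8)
The plan is to prove that $\S^p$ equals the product set $P=\langle\A^p\rangle\langle\B^p\rangle\langle\C^p\rangle$; once this is done, uniqueness of the factorization will follow for free from Proposition \ref{heipres2}. I would start by extracting the content of the hypothesis $n\neq 2$: as $n\geq 3$, in both presentations one has $\D=\C^{p^k}$ with $k\geq 1$ (explicitly $k=n-i-1$ when $2i\leq n$ and $k=i-1$ when $2i\geq n$, each positive exactly because $n\geq 3$), so $\langle\D\rangle\subseteq\langle\C^p\rangle$. I would also recall from the construction in Proposition \ref{heipres2} that $\D$ lies in the center of $\S$.

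The main step is to show that $P$ is a subgroup of $\S$, which, $\S$ being finite, reduces to closure under multiplication. The relation $\C\A\C^{-1}=\A^{1+ep}$ gives $\C\A^p\C^{-1}=(\A^p)^{1+ep}$, and similarly $\C\B^p\C^{-1}=(\B^p)^{1+dp}$, so $\C$—and hence $\langle\C^p\rangle$—normalizes both $\langle\A^p\rangle$ and $\langle\B^p\rangle$; this lets me push every $\C^p$-factor to the right past the $\A^p$- and $\B^p$-factors. Because $\D=[\A,\B]$ commutes with $\A$ and $\B$, the usual bilinearity yields $[\B^{pb},\A^{pa}]=\D^{-p^2ab}\in\langle\C^p\rangle$, which lets me reorder a $\B^p$-factor past an $\A^p$-factor at the cost of a central element of $\langle\C^p\rangle$. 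Sorting an arbitrary word in the generators into the order $\A^p,\B^p,\C^p$ therefore shows $P\cdot P\subseteq P$, so $P$ is a subgroup, equal to $\langle\A^p,\B^p,\C^p\rangle$.

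The inclusion $P\subseteq\S^p$ is clear, since $\A^p,\B^p,\C^p$ are themselves $p$-th powers. For $\S^p\subseteq P$, it suffices—$P$ being a subgroup and $\S^p$ being generated by the $p$-th powers—to check that $g^p\in P$ for every $g\in\S$. Writing $g=ABC$ as in Proposition \ref{heipres2} and applying Lemma \ref{potpq} with trivial $\D$-factor gives $g^p=A_0B_0D_0C^p$ with $A_0\in\langle\A^p\rangle$, $B_0\in\langle\B^p\rangle$, $D_0\in\langle\D^p\rangle$ and $C\in\langle\C\rangle$; as $D_0\in\langle\D\rangle\subseteq\langle\C^p\rangle$ and $C^p\in\langle\C^p\rangle$, the product $D_0C^p$ lies in $\langle\C^p\rangle$ and $g^p\in P$. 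Hence $\S^p=P$. Uniqueness is then automatic: any expression $v=ABC$ with $A\in\langle\A^p\rangle\subseteq\langle\A\rangle$, $B\in\langle\B^p\rangle\subseteq\langle\B\rangle$, $C\in\langle\C^p\rangle\subseteq\langle\C\rangle$ is in particular a factorization of the kind in Proposition \ref{heipres2}, so $A,B,C$ are uniquely determined by $v$.

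I expect the real work to be concentrated in the closure argument of the second paragraph, and in particular in checking that the commutator correction $[\B^{pb},\A^{pa}]$ and the factor $D_0$ from Lemma \ref{potpq} both land in $\langle\C^p\rangle$. This is precisely where $n\neq 2$ is felt, via the containment $\langle\D\rangle\subseteq\langle\C^p\rangle$; the excluded case $n=2$ is the degenerate one in which $\S$ is the exponent-$p$ Heisenberg group and $\S^p$ is trivial.
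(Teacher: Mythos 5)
Your proof is correct and is essentially the paper's own argument: both rest on the observation that $n\neq 2$ forces $\D\in\langle\C^p\rangle$, on Lemma \ref{potpq} combined with Proposition \ref{heipres2} to put every $p$-th power in the form $A_0B_0C_0$ with $A_0\in\langle\A^p\rangle$, $B_0\in\langle\B^p\rangle$, $C_0\in\langle\C^p\rangle$, on closure of such products to get existence, and on the uniqueness clause of Proposition \ref{heipres2}. The only difference is that your second paragraph carefully verifies (via the normalizing action of $\C$ and the centrality of $\D$) the closure step that the paper dismisses as ``easy to see.''
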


\begin{proof} As $n\neq 2$, we have $\D\in \langle \C^p\rangle$. Let $u\in\S$. Then
Proposition \ref{heipres2} and Lemma \ref{potpq} give
\begin{equation}
\label{cuza}
u^p=A_0B_0C_0,
\end{equation}
where
$A_0\in\langle\A^p\rangle$, $B_0\in\langle\B^p\rangle$, and $C_0\in\langle\C^p\rangle$.
As $\S$ has finite order, $v$ is a finite product of elements of the form (\ref{cuza}), and it
is easy to see that such a product will be also be of the stated form. This proves existence,
while uniqueness follows from Proposition \ref{heipres2}.
\end{proof}

Given a nonzero integer $m$, we write $v_p(m)$ for the $p$-valuation of $m$,
so that $v_p(m)=a$,
where $a$ is the unique integer satisfying $a\geq 0$, $p^a|m$, and $p^{a+1}\nmid m$. We extend the use of $v_p$
to nonzero rational numbers in the usual way.

\begin{prop}\label{charac} Let $\Omega\in\mathrm{Aut}(\S)$ and $t\in \T$. Set $\ell=i$ if $2i\leq n$ and $\ell=n-i$ if $2i\geq n$.
Suppose that $t^{p^\ell}=1$. Then $\Omega(t)\in \T$. In particular, if $2i\geq n$ then $\T$ is a characteristic subgroup of $\S$.
\end{prop}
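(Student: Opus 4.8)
The plan is to detach the statement from $\Omega$ entirely and reduce it to an intrinsic structural fact about $\S$, which I then prove by tracking a single $p$-valuation. Since $\Omega$ is an automorphism, $\Omega(t)^{p^\ell}=\Omega(t^{p^\ell})=1$, so $\Omega(t)$ is an element of $\S$ killed by $p^\ell$. Hence everything follows from the claim that \emph{every $u\in\S$ with $u^{p^\ell}=1$ already lies in $\T$}: granting it, $\Omega(t)\in\T$ immediately. For the final assertion, when $2i\ge n$ Proposition~\ref{heipres} identifies $\T$ with $\Heis(\Z/p^{n-i}\Z)$, which, $p$ being odd, has exponent $p^{n-i}=p^\ell$; thus every $t\in\T$ satisfies $t^{p^\ell}=1$, the claim gives $\{u\in\S:u^{p^\ell}=1\}=\T$, and since the left-hand set is visibly invariant under every automorphism, $\T$ is characteristic.

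To prove the claim I would argue by contraposition using the normal form of Proposition~\ref{heipres2}. Write $u=\A^a\B^b\C^r$ and set $m=n-i-1$ if $2i\le n$ and $m=i-1$ if $2i\ge n$, so that $\langle\D\rangle=\langle\C^{p^m}\rangle$ and $u\in\T$ exactly when $p^m\mid r$. Assuming $p^m\nmid r$, i.e.\ $v_p(r)\le m-1$, it suffices by uniqueness of the normal form to show the $\langle\C\rangle$-component of $u^{p^\ell}$ is nontrivial. So I track the $\C$-exponent through successive $p$-th powers: writing $u^{p^k}=\A^{a_k}\B^{b_k}\C^{r_k}$, Lemma~\ref{potpq} (applied with trivial $\D$-part) gives $(u^{p^k})^p=A_0B_0D_0\C^{pr_k}$ with $D_0\in\langle\D^p\rangle=\langle\C^{p^{m+1}}\rangle$, yielding the recursion $r_{k+1}\equiv pr_k\pmod{p^{m+1}}$.

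The heart of the matter is to show $v_p(r_\ell)=v_p(r)+\ell$. While $v_p(r_k)\le m-1$, the term $pr_k$ dominates the congruence (its valuation is $\le m$, against $\ge m+1$ for the error), so $v_p(r_{k+1})=v_p(r_k)+1$ and the element stays outside $\T$. The delicate step is the one at which $v_p(r_k)$ first reaches $m$: here $u^{p^k}$ enters $\T$, so I switch from Lemma~\ref{potpq} to the explicit Heisenberg power rule of Proposition~\ref{heipres}, under which the $\D$-exponent $z_k$ (with $r_k=p^m z_k$) evolves by $z\mapsto pz$ up to a correction that is a multiple of $\binom{p}{2}a_kb_k$. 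Because $p$ is odd, $p\mid\binom{p}{2}$, and because every $p$-th power has its $\A$- and $\B$-parts in $\langle\A^p\rangle,\langle\B^p\rangle$ (again Lemma~\ref{potpq}, so $v_p(a_k),v_p(b_k)\ge1$ for $k\ge1$), this correction has strictly larger $p$-valuation than $pz_k$; hence no cancellation occurs and $v_p(z_k)$, and so $v_p(r_k)$, keeps growing by exactly one per step. This is the main obstacle—ruling out cancellation precisely as the $\C$-exponent crosses the threshold $p^m$—and it is exactly where oddness of $p$ is used.

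Combining the two phases gives $v_p(r_\ell)=v_p(r)+\ell$ in all cases. Finally $v_p(r)+\ell\le(m-1)+\ell=n-2$, since $m+\ell=n-1$ in both regimes (namely $(n-i-1)+i$ and $(i-1)+(n-i)$). Therefore $r_\ell\not\equiv0\pmod{p^{n-1}}$, so $\C^{r_\ell}\neq1$ and $u^{p^\ell}\neq1$, contradicting the hypothesis. This establishes the claim and hence the proposition.
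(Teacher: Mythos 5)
Your proposal is correct, but it takes a genuinely different route from the paper's proof. The paper never detaches the statement from $\Omega$: for an arbitrary $t\in\T$ it notes $t^p\in\langle\A^p,\B^p,\D^p\rangle$ (Lemma \ref{potpq}), so that $u^p=\Omega(t^p)$ lands in the derived subgroup $\langle\A^p,\B^p,\D\rangle$ of Proposition \ref{v}, which is characteristic and contained in $\T$; writing $u$ in normal form as $AB\C^r$, this alone already forces $v_p(r)\geq n-\ell-2$, and the hypothesis $t^{p^\ell}=1$ is then spent in a single step --- one $\D$-component computation of a $p^{\ell-1}$-st power --- to improve this to $v_p(r)\geq n-\ell-1$, i.e.\ $u\in\T$. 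You instead prove the stronger, automorphism-free fact that the entire $p^\ell$-torsion of $\S$ lies in $\T$, by iterating the powering process through all $\ell$ steps and tracking valuations, with the explicit Heisenberg power rule taking over once the element crosses into $\T$. What your route buys is an intrinsic characterization, $\{u\in\S:u^{p^\ell}=1\}\subseteq\T$ with equality when $2i\geq n$, which makes both assertions of the proposition immediate and dispenses with Proposition \ref{v} altogether; what the paper's route buys is brevity, since the characteristic derived subgroup starts the valuation count one notch below the threshold and leaves only a single step to push.

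One quantitative point in your second phase needs tightening. To rule out cancellation in the step $z_{k+1}=pz_k-\binom{p}{2}a_kb_k$ you need $v_p(a_k)+v_p(b_k)>v_p(z_k)$ at \emph{every} step past the crossover, and the bound you cite, $v_p(a_k),v_p(b_k)\geq 1$, only guarantees this while $v_p(z_k)\leq 1$, i.e.\ for the first two such steps. The repair comes from the very iteration you run: each $p$-th power multiplies the $\A$- and $\B$-exponents by a multiple of $p$ (Lemma \ref{potpq} in the first phase, the Heisenberg rule in the second), so $v_p(a_k),v_p(b_k)\geq k$, while $v_p(z_k)=k-k^*\leq k-1$ because the crossover index $k^*$ (the first $k$ with $v_p(r_k)=m$) satisfies $k^*\geq 1$ under your contrapositive hypothesis $v_p(r)\leq m-1$; hence $v_p(a_k)+v_p(b_k)\geq 2k>v_p(z_k)$ and your induction closes. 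With that adjustment the argument is complete and correct.
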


\begin{proof} If $n=2$ then $\T=\S$, so we assume that $n\neq 2$.

Let $\Omega\in\mathrm{Aut}(\S)$ and $t\in \T$ be arbitrary.  Then
$t^p\in\langle\A^p,\B^p,\D^p\rangle$ by Lemma \ref{potpq}. By Proposition~\ref{v}, $\langle\A^p,\B^p,\D\rangle=[\S,\S]$,
which is a characteristic subgroup of $\S$. Thus, setting $u=\Omega(t)$, we have
$$
u^p=\Omega(t)^p=\Omega(t^p)\in \langle\A^p,\B^{p},\D\rangle\subset \T.
$$
On the other hand, by Proposition \ref{heipres2}, we have
\begin{equation}
\label{luca}
u=AB\C^r,\quad r\in\Z,
\end{equation}
where $A\in\langle \A\rangle$ and $B\in\langle \B\rangle$. Thus Lemma \ref{potpq}  gives
$$
u^p=A_0B_0D_0\C^{rp},
$$
where $A_0\in\langle \A^p\rangle$, $B_0\in\langle \A^p\rangle$, and $D_0\in\langle \D^p\rangle$.
From $u^p\in \T$ we infer $\C^{rp}\in \T$. Since $\C^{p^{n-\ell-1}}=\D\in \T$ but $\C^{p^{n-\ell-2}}\notin \T$,
we deduce $v_p(r)\geq n-\ell-2$, so
\begin{equation}
\label{luca2}
r=p^{n-\ell-2}f,\quad f\in\Z,
\end{equation}
and therefore
$$
u^p=A_0B_0D_0\D^f.
$$
We now suppose for the first time that $t^{p^\ell}=1$, so that $u^{p^\ell}=1$. Then
$$
(A_0B_0D_0\D^f)^{p^{\ell-1}}=1,
$$
that is
$$
(A_0B_0D_0)^{p^{\ell-1}}\D^{f p^{\ell-1}}=1.
$$
Referring to the normal form of the elements of $\T$, the $\D$-component of $(A_0B_0D_0)^{p^{\ell-1}}$ is equal to
$$
D_0^{p^{\ell-1}}[B_0,A_0] [B_0,A_0]^2\cdots [B_0,A_0]^{p^{\ell-1}-1}=1.
$$
Therefore $\D^{f p^{\ell-1}}=1$, which implies $p|f$. Thus $v_p(r)\geq n-\ell-1$ and $u\in \T$.
\end{proof}

\begin{prop}\label{nonsplit} The group $\S$ is a nonsplit extension of $\T$, unless $n=2$ in which case $\T=\S$.
\end{prop}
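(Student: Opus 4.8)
The plan is to argue by contradiction: assume $n \neq 2$ and that the extension $1 \to \T \to \S \to C_{p^j} \to 1$ splits, where $j = n-i-1$ if $2i \leq n$ and $j = i-1$ if $2i \geq n$. A splitting would produce an element $w \in \S$ of order exactly $p^j$ whose image generates the quotient $\S/\T \cong \langle \overline{\C}\rangle$; equivalently, there is a complement $\langle w\rangle$ to $\T$ in $\S$ with $w^{p^j} = 1$ and $w \notin \T$. By the unique normal form of Proposition~\ref{heipres2}, such a $w$ can be written $w = AB\C^r$ with $A \in \langle\A\rangle$, $B \in \langle\B\rangle$, and $v_p(r) = 0$ (so that $\overline{w}$ generates the cyclic quotient). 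My goal is to show no such element of order $p^j$ exists, i.e.\ every $w$ of this form has order strictly larger than $p^j$.

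The key computation is to iterate the power formula. Since $\C^{p^{j}} = \D$ lies in $\T$ but is nontrivial (it generates the cyclic factor of order $p^j$ sitting inside $\T$ via the relation $[\A,\B] = \C^{p^{n-i-1}} = \D$ in the case $2i\le n$, and analogously $[\A,\B] = \C^{p^{i-1}} = \D$ when $2i\ge n$), I would apply Lemma~\ref{potpq} repeatedly to $w = AB\C^r$. Writing $w^{p} = A_0 B_0 D_0 \C^{rp}$ and continuing, the $\C$-part of $w^{p^{j}}$ is $\C^{r p^{j}} = (\C^{p^{j}})^{r} = \D^{r}$, which is nontrivial because $p \nmid r$ and $\D$ has order $p^{j} > 1$ (here we use $n \neq 2$, so $j \geq 1$). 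The heart of the matter is to verify that this nonzero $\D^{r}$ is not cancelled by the $\langle\D\rangle$-contributions coming from the abelian parts $A^{\ast}, B^{\ast}$ and from the accumulated commutators: using the normal form of $\T$ from Proposition~\ref{heipres}, the $\D$-component of $w^{p^j}$ is $\D^{r}$ times a product of $\A^{p}$-, $\B^{p}$-, and commutator-terms lying in $\langle \D^{?}\rangle$, and I would track $p$-valuations (via Lemma~\ref{uno} applied to $j = (1+ep)^{\ast}$, $k = (1+dp)^{\ast}$) to see that the extra contribution sits in a strictly higher power of $\D$ and hence cannot kill $\D^{r}$.

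The main obstacle I anticipate is precisely this last cancellation bookkeeping: one must show that the commutator-collection terms $D_1 = \prod [B,A]^{(\cdots)}$ appearing in each application of Lemma~\ref{potpq}, together with the $\D$-shifts hidden inside $A_0, B_0$, all lie in $\langle \D^{p}\rangle$ or higher, so that modulo the top cyclic quotient $\langle\D\rangle/\langle\D^{p}\rangle$ the $\D$-exponent of $w^{p^j}$ is congruent to $r \not\equiv 0 \bmod p$. This is the same phenomenon that made Proposition~\ref{charac} work, and I would lean on the congruences $j \equiv k \equiv 1 \bmod p$ already established there to show the commutator exponents vanish modulo $p$. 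Once that is in hand, $w^{p^j} \neq 1$ for every $w$ projecting to a generator of $\S/\T$, so no complement exists and the extension is nonsplit. A cleaner alternative I would also consider is to compare $p$-ranks or to invoke Proposition~\ref{u}: if $\S$ split as $\T \rtimes C_{p^j}$, one could try to exhibit $\S$ as generated by two elements (a generator of the complement together with a generating element of $\T$ built from $\A,\B$), contradicting $z(\S) = 3$ for $n > 2$; deciding between these two routes, and confirming the direct-product-vs-semidirect subtleties, is where I would spend the most care.
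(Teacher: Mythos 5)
Your proposal is correct and is essentially the paper's own argument: write the would-be complement generator in normal form $AB\C^r$ with $p\nmid r$ (the paper writes $u=t\C^j$), iterate Lemma \ref{potpq} to get $w^{p^j}=A_1B_1D_1\C^{rp^j}=A_1B_1D_1\D^r$ with $A_1\in\langle\A^p\rangle$, $B_1\in\langle\B^p\rangle$, $D_1\in\langle\D^p\rangle$, and invoke uniqueness of the normal form to force $D_1\D^r=1$, which is impossible since $p\nmid r$ and $\D\neq 1$. The ``cancellation bookkeeping'' you flag as the main obstacle is already fully contained in the conclusion $D_0\in\langle\D^p\rangle$ of Lemma \ref{potpq}, so no further valuation tracking is needed; your only slip --- asserting $\D$ has order $p^j$, when in fact it has order $p^i$ (resp.\ $p^{n-i}$) --- is harmless, since the argument uses only $\D\neq 1$.
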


\begin{proof} We may assume $n\neq 2$. Suppose, if possible, that there is an element $u\in \S$ such that
$\S=\T\rtimes \langle u\rangle$, so that $u$ has order $q$, with $q=p^{n-i-1}$ if $2i\leq n$ and $q=p^{i-1}$ if $2i\geq n$.
Now $u=t\C^j$, where $t\in \T$ and $j\in\Z$.
Here $p\nmid j$, for otherwise  $\C\notin \T\rtimes\langle u\rangle$. Proposition \ref{heipres} and a repeated application of Lemma \ref{potpq} yield
$$
1=u^q=A_1B_1D_1\C^{jq}=A_1B_1D_1\D^{j},
$$
where $A_1\in\langle\A^p\rangle$, $B_1\in\langle\B^p\rangle$, and $D_1\in\langle\D^p\rangle$. The normal form of the elements of $\T$ forces $D_1\D^j=1$. As $p\nmid j$, we see that $D_1\D^j$ generates the nontrivial group $\langle \D\rangle$, a contradiction.
\end{proof}

\begin{prop} Let $(d_0,e_0)$ be a pair of integers satisfying (\ref{eqrd}) and
(\ref{eqrd2}), and let $\S_0$ be the group associated with it via (\ref{sime}) and (\ref{sima}). Then
$\S\cong\S_0$.
\end{prop}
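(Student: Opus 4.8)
The plan is to exhibit an explicit isomorphism $\S\to\S_0$ obtained by twisting the generators by a single unit. The starting observation is that, by Lemma~\ref{uno}, $(1+dp)^{p^k}\equiv 1+dp^{k+1}\mod p^{k+2}$, so $1+dp$ has multiplicative order exactly $p^{n-1}$ modulo $p^n$; the same holds for $1+d_0p$. Consequently both $1+dp$ and $1+d_0p$ are generators of the cyclic group $U=(1+p\Z)/(p^n\Z)$ of order $p^{n-1}$, so there is an integer $s$, necessarily coprime to $p$, with $(1+d_0p)^s\equiv 1+dp\mod p^n$. Raising the relation $(1+d_0p)(1+e_0p)\equiv 1$ for $(d_0,e_0)$ to the $s$-th power and invoking (\ref{eqrd2}) for $(d,e)$ then forces $(1+e_0p)^s\equiv 1+ep\mod p^n$ as well. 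Thus a single congruence modulo $p^n$ controls both conjugation data simultaneously.

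With this $s$ in hand, I would define the map on generators by $\A\mapsto\A_0$, $\B\mapsto\B_0^s$, $\C\mapsto\C_0^s$ and check that it respects every defining relation in (\ref{sime}) (resp.\ (\ref{sima})). The order relations are immediate since $s$ is coprime to $p$. The two conjugation relations also present no difficulty: for instance $\C_0^s\A_0\C_0^{-s}=\A_0^{(1+e_0p)^s}=\A_0^{1+ep}$, the last equality being the congruence above read modulo the order of $\A_0$, and likewise $\C_0^s\B_0^s\C_0^{-s}=\B_0^{s(1+d_0p)^s}=(\B_0^s)^{1+dp}$ modulo the order of $\B_0$.

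The one relation that dictates the twist is the commutator relation. The element $\D_0=\C_0^{p^{n-i-1}}$ (resp.\ $\C_0^{p^{i-1}}$) is central in $\S_0$, being central in the Heisenberg group $\T_0$ and fixed under conjugation by $\C_0$, so the bracket is bilinear into $\langle\D_0\rangle$. Hence $[\A_0,\B_0^s]=[\A_0,\B_0]^s=\C_0^{sp^{n-i-1}}$, which is exactly the image $(\C_0^s)^{p^{n-i-1}}$ of the right-hand side $\C^{p^{n-i-1}}$. It is precisely to make these two factors of $s$ agree that I twist $\B$ (and not $\A$) by the same exponent $s$ used on $\C$; had I left $\B$ untwisted the two sides would differ by a power of $s$. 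This verification yields a homomorphism $\phi\colon\S\to\S_0$.

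Finally, since $\gcd(s,p)=1$, the images $\A_0,\B_0^s,\C_0^s$ generate $\S_0$, so $\phi$ is surjective; and $|\S|=|\S_0|$ by Proposition~\ref{heipres2}, the stated order depending only on $n$ and $i$. A surjection between finite groups of equal order is an isomorphism, so $\S\cong\S_0$. I expect the only genuinely delicate point to be the bookkeeping in the commutator relation, namely confirming that the exponent $s$ forced onto $\C$ by the conjugation relations is matched on the $\B$-side so that both sides of $[\A,\B]=\C^{p^{n-i-1}}$ acquire the same power of $s$; all remaining checks are reductions modulo $p^i$, $p^{n-i}$, or $p^{n-1}$ of the single congruence $(1+d_0p)^s\equiv 1+dp\mod p^n$.
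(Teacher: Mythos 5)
Your proof is correct and follows essentially the same route as the paper: the paper uses cyclicity of the Sylow $p$-subgroup of the units to find $r$ coprime to $p$ with $(1+ep)^r\equiv 1+e_0p \bmod p^{n-1}$, checks that $\A^r,\B,\C^r$ satisfy the defining relations of $\S_0$, and concludes from generation and equality of orders. Your version merely mirrors this (twisting $\B$ and $\C$ instead of $\A$ and $\C$, mapping $\S\to\S_0$ rather than $\S_0\to\S$, and working modulo $p^n$ instead of $p^{n-1}$), including the key observation that the commutator relation forces exactly one of $\A,\B$ to carry the same exponent as $\C$.
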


\begin{proof} Set $j=i$ if $2i\leq n$ and $j=n-i$ if $2i\geq n$.

Let $H$ be the Sylow $p$-subgroup of the unit group $(\Z/p^{n-1}\Z)^\times$. Then $H$ is generated by the class
of $1+ep$. Since the class of $1+e_0p$ also generates $H$, there is an integer $r$, relatively prime to $p$, such that
$(1+ep)^r\equiv 1+e_0p\mod p^{n-1}$. Taking inverses modulo $p^{n-1}$, we infer $(1+dp)^r\equiv 1+d_0p\mod p^{n-1}$.
Consider the elements $A=\A^r,B=\B, C=\C^r$. They generate $\S$ and satisfy
$$
A^{p^j}=B^{p^{n-i}}=C^{p^{n-1}}=1,\; [A,B]=C^{n-j-1},\; CAC^{-1}=A^{1+e_0p},\; CBC^{-1}=B^{1+d_0p}.
$$
This readily gives an isomorphism $\S_0\to\S$.
\end{proof}

For future reference note that (\ref{eqrd2}) implies $e\equiv -d\mod p$. Since $p$ is odd and $p\nmid d$, we deduce 
\begin{equation}\label{enod}
e\not\equiv d\mod p.
\end{equation}


\section{The automorphism group of $\S$ when $2i\geq n$}\label{rojo1}

We assume throughout this section that $2i\geq n$. Using the matrix description of $\T$ given in Proposition \ref{heipres}, we readily see that
$$
Z(\T)=\langle \D\rangle.
$$
In particular, $\D=\C^{p^{i-1}}$ is central in $\S$. More precisely, we have the following result.

\begin{lemma}\label{centro} The center of $\S$ is generated by $\C^{p^{n-i-1}}$.
\end{lemma}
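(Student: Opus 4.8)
The plan is to prove the two inclusions $\langle\C^{p^{n-i-1}}\rangle\subseteq Z(\S)$ and $Z(\S)\subseteq\langle\C^{p^{n-i-1}}\rangle$ separately. Recall from this section that $\A$, $\B$ and $\D=[\A,\B]=\C^{p^{i-1}}$ all have order $p^{n-i}$ (Proposition \ref{heipres}), while $\C$ has order $p^{n-1}$ (Proposition \ref{heipres2}); in particular $\langle\D\rangle\subseteq\langle\C^{p^{n-i-1}}\rangle$ since $n-i-1\leq i-1$.

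First I would show $\C^{p^{n-i-1}}$ is central. It commutes with $\C$ trivially, so it suffices to check $\A$ and $\B$. From $\C\A\C^{-1}=\A^{1+ep}$ we get $\C^{p^{n-i-1}}\A\C^{-p^{n-i-1}}=\A^{(1+ep)^{p^{n-i-1}}}$, and Lemma \ref{uno} (with $a=1$, $\ell=e$, $b=n-i-1$) gives $(1+ep)^{p^{n-i-1}}\equiv 1\pmod{p^{n-i}}$; since $\A$ has order $p^{n-i}$ the right-hand side is just $\A$. The identical argument with $d$ in place of $e$ handles $\B$. Hence $\C^{p^{n-i-1}}\in Z(\S)$.

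For the reverse inclusion, take $u\in Z(\S)$ and write it in the normal form $u=\A^x\B^y\C^z$ afforded by Proposition \ref{heipres2}; I would then extract constraints on $x,y,z$ from $[u,\A]=1$ and $[u,\B]=1$. Expanding $[u,\A]$ with the commutator identities from the Introduction, and using that $\D$ is central of order $p^{n-i}$ and that $\C^z\A\C^{-z}=\A^{(1+ep)^z}$, a short computation collapses to $[u,\A]=\A^m\,\D^{-y(m+1)}$, where $m=(1+ep)^z-1$. Because the $\langle\A\rangle$- and $\langle\C\rangle$-components of the normal form are independent, $[u,\A]=1$ splits into $\A^m=1$ and $\D^{y(m+1)}=1$.

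The heart of the matter is the valuation estimate $v_p\big((1+ep)^z-1\big)=1+v_p(z)$, a direct consequence of Lemma \ref{uno}: it shows $\A^m=1$, i.e. $(1+ep)^z\equiv1\pmod{p^{n-i}}$, holds precisely when $p^{n-i-1}\mid z$. Granting this, $m\equiv0\pmod{p^{n-i}}$, so $\D^{y(m+1)}=1$ reduces to $\D^{y}=1$, forcing $\B^y=1$; the symmetric computation for $[u,\B]=1$ (with $d$ replacing $e$) yields $p^{n-i-1}\mid z$ and $\A^x=1$. Combining these, $u=\C^z$ with $p^{n-i-1}\mid z$, i.e. $u\in\langle\C^{p^{n-i-1}}\rangle$, as desired; note that $[u,\C]=1$ turns out to be automatic and is never needed. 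The one delicate step is the commutator bookkeeping producing $[u,\A]=\A^m\D^{-y(m+1)}$ together with the precise application of Lemma \ref{uno} to pin down the condition on $z$. A slightly shorter structural alternative is to use $Z(\S)\cap\T\subseteq Z(\T)=\langle\D\rangle$ and that $Z(\S)$ maps into the cyclic quotient $\S/\T$, but the explicit calculation has the advantage of exhibiting the generator directly.
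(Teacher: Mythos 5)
Your proof is correct and follows essentially the same route as the paper: both arguments reduce centrality to the conjugation action of the $\C$-component on $\A$ and $\B$, apply Lemma \ref{uno} to force $p^{n-i-1}$ to divide the exponent of $\C$, and use $2i\geq n$ (i.e.\ $i-1\geq n-i-1$) to absorb $\langle\D\rangle$ into $\langle\C^{p^{n-i-1}}\rangle$. The only difference is bookkeeping: the paper writes a central element as $\C^r t$ with $t\in\T$ and quotes $Z(\T)=\langle\D\rangle$, whereas you carry out the equivalent commutator computation explicitly in the normal form $\A^x\B^y\C^z$ and split it using uniqueness of that form.
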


\begin{proof} Let $z\in Z(\S)$. Then $z=\C^r t$ for some $t\in \T$ and $r\in\N$, so
(\ref{sima}) yields
$$
{}^{t}\A=\A\D^a,\; {}^{t}\B=\B\D^b
$$
for some $a,b\in\Z$, as well as
$$
\A={}^{z}\A=\,{}^{\C^r}\!(\A\D^a)=\A^{(1+ep)^r}\D^a,\;
\B={}^{z}\B={}^{\C^r}\!(\B\D^b)=\B^{(1+dp)^r}\D^b,
$$
so $t\in Z(\T)=\langle \D\rangle$ and $\C^r$ commutes with $\A$ and $\B$.
Since $\C^r$ commutes with $\B$,
we see  that $(1+d p)^r\equiv 1\mod p^{n-i}$.  As $p\nmid d$, Lemma \ref{uno} ensures that $1+dp$ has order $p^{n-i-1}$
modulo $p^{n-i}$, so $p^{n-i-1}|r$. But $i-1\geq n-i-1$, so $\D\in \langle \C^{p^{n-i-1}}\rangle$, and by above
$Z(\S)\subseteq \langle \C^{p^{n-i-1}}\rangle$. As the reverse inclusion is clear, the result follows.
\end{proof}

\begin{theorem}\label{mainresult} Let $p$ be an odd prime. Suppose $i,n\in\N$ satisfy
$n\geq 2$, $1\leq i\leq n-1$, and $2i\geq n$. Let $d,e\in\Z$ be chosen so that
(\ref{eqrd}) and (\ref{eqrd2}) hold, and let $\S$ be the group with presentation (\ref{sima}).

Let $A,B,C\in\mathrm{Inn}(\S)$ be the inner automorphisms of $\S$ associated with $\A,\B,\C$, respectively.
Let $D,E,F,G,H\in \mathrm{Aut}(\S)$ be respectively the defined by
$$
\A \mapsto \A^{1+p^{n-i-1}},\;\B\mapsto\B,\; \C\mapsto \C^{1+p^{n-i-1}},
$$
$$
\A\mapsto\A\B^{p^{n-i-1}},\; \B\mapsto\B,\; \C\mapsto\C,
$$
$$
\A\mapsto\A,\; \B\mapsto\A^{p^{n-i-1}}\B,\; \C\mapsto\C,
$$
$$
\A\mapsto\A^{g_0},\; \B\mapsto\B^{h_0},\; \C\mapsto\C,
$$
$$
\A\mapsto\B,\; \B\mapsto\A,\; \C\mapsto\C^{-1},
$$
where $g_0$ is any integer of order $p-1$ modulo $p^{n-i}$, with inverse $h_0$ modulo $p^{n-i}$. Consider the homomorphism
$\lambda:\mathrm{Aut}(\S)\to \mathrm{Aut}(\S/\T)$, whose existence is ensured by Proposition \ref{charac}, as well as the normal subgroup
$\Delta=\ker\lambda$ of $\mathrm{Aut}(\S)$. Moreover, suppose that $i\neq n-1$. Then
\begin{equation}
\label{autoabc}
\mathrm{Aut}(\S)=\langle A,B,C,D,E,F,G,H\rangle
\end{equation}
is a group of order $2p^{3n-2i+1}(p-1)$, and
$$
\Delta=\langle A,B,C,D,E,F,G\rangle,\;
\mathrm{Aut}(\S)=\Delta\rtimes\langle H\rangle=\langle A,B,C,D,E,F\rangle\rtimes
(\langle G\rangle\rtimes\langle H\rangle),\; [\mathrm{Aut}(\S):\Delta]=2,
$$
$$
\Delta=\mathrm{Inn}(\S)\rtimes \langle D,E,F,G\rangle,\;
$$
$$
\mathrm{Inn}(\S)=\langle A,B,C\rangle\cong (\Z/p^{n-i}\Z\times \Z/p^{n-i}\Z)\rtimes \Z/p^{n-i-1}\Z,
$$
$$
\langle D,E,F,G\rangle\cong \Z/p^{i}\Z\times [(\Z/p\Z\times \Z/p\Z)\rtimes \Z/(p-1)\Z].
$$
Furthermore, the generators $A,B,C,D,E,F,G,H$ of $\mathrm{Aut}(\S)$ satisfy the defining relations
$$
A^{p^{n-i}}=1,\; B^{p^{n-i}}=1,\; C^{p^{n-i-1}}=1,\; CAC^{-1}=A^{1+ep},\; CBC^{-1}=B^{1+dp},\; AB=BA,
$$
$$
D^{p^{i}}=1,\; E^{p}=1,\; F^{p}=1,\; G^{p-1}=1,\; H^2=1,\; DE=ED,\; DF=FD,\; DG=GD,\; EF=FE,
$$
$$
GEG^{-1}=E^{h_0^2},\; GFG^{-1}=F^{g_0^2}, HEH^{-1}=A^{-p^{n-i-1}}F,\; HFH^{-1}=B^{p^{n-i-1}}E,\; HGH^{-1}=G^{-1},
$$
$$
DAD^{-1}=A^{1+p^{n-i-1}},\; DB=BD,\; DC=CD,
$$
$$
EAE^{-1}=AB^{p^{n-i-1}},\; EB=BE,\; EC=CE,
$$
$$
FA=AF,\; FBF^{-1}=A^{p^{n-i-1}}B,\; FC=CF,
$$
$$
GAG^{-1}=A^{g_0},\; GBG^{-1}=B^{h_0},\; GC=CG,
$$
$$
HAH^{-1}=B,\; HBH^{-1}=A,\; HCH^{-1}=C^{-1},\; HDH^{-1}=D C^{vp^{n-i-2}},
$$
where $v\in\Z$ satisfies $dv\equiv 1\mod p$, and the generators $\overline{D},\overline{E},\overline{F},\overline{G},\overline{H}$
of 
\begin{equation}
\label{otre}
\mathrm{Out}(\S)\cong \Z/p^{i}\Z\times [(\Z/p\Z\times \Z/p\Z)\rtimes (\Z/(p-1)\Z\rtimes \Z/2\Z)]
\end{equation}
satisfy the defining relations naturally arising from the above, and realizing (\ref{otre}). 

In addition, $\Pi=\langle A,B,C,D,E,F\rangle$ is a normal Sylow $p$-subgroup  of $\mathrm{Aut}(\S)$,
with the above defining relations, excluding those involving $G$ or $H$, and $\Pi$ is the kernel
of the natural homomorphism $\Gamma:\mathrm{Aut}(\S)\to \mathrm{Aut}(\S/\S^p)$.
Furthermore, $\Pi$ cannot be generated by fewer than 6 elements.
\end{theorem}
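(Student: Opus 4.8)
The plan is to compute $\mathrm{Aut}(\S)$ by parametrizing an arbitrary automorphism through the images of $\A,\B,\C$ and matching the resulting families against words in the proposed generators. I first handle the inner part: by Lemma~\ref{centro} we have $Z(\S)=\langle\C^{p^{n-i-1}}\rangle$ of order $p^i$, so $\mathrm{Inn}(\S)\cong\S/Z(\S)$ has order $p^{3(n-i)-1}$, and reducing the relations \eqref{sima} modulo $Z(\S)$ shows that $A,B$ have order $p^{n-i}$ and commute while $C$ has order $p^{n-i-1}$ and conjugates them by $1+ep,1+dp$; this is exactly $(\Z/p^{n-i}\Z\times\Z/p^{n-i}\Z)\rtimes\Z/p^{n-i-1}\Z$. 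Next I verify that each of $D,E,F,G,H$ respects the defining relations \eqref{sima} — using \eqref{eqrd}, \eqref{eqrd2}, \eqref{enod} and Lemma~\ref{uno} — so that they are genuine automorphisms, and I confirm every displayed relation among $A,\dots,H$ by substituting into the normal form of Proposition~\ref{heipres2}. Consequently $\mathrm{Aut}(\S)$ is a quotient of the abstract group $Q$ presented by these generators and relations, and a routine collection of the relations bounds $|Q|\le 2p^{3n-2i+1}(p-1)$.

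The heart of the proof is the reverse inequality, obtained by analysing the restriction homomorphism $\rho\colon\mathrm{Aut}(\S)\to\mathrm{Aut}(\T)$, which is defined because $\T$ is characteristic (Proposition~\ref{charac}). For $\Omega\in\mathrm{Aut}(\S)$ I write, in the normal form of Proposition~\ref{heipres2}, $\Omega(\A)=\A^{a_1}\B^{a_2}\D^{a_3}$ and $\Omega(\B)=\A^{b_1}\B^{b_2}\D^{b_3}$ (both lie in $\T$) and $\Omega(\C)=\A^{c_1}\B^{c_2}\C^{c_3}$, and translate the constraints — the orders, the relation $[\Omega(\A),\Omega(\B)]=\Omega(\C)^{p^{i-1}}$, the two conjugation relations, and invertibility modulo $\S^p$ — into congruences. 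Using the Heisenberg bracket $[\A^{a_1}\B^{a_2},\A^{b_1}\B^{b_2}]=\D^{a_1b_2-a_2b_1}$ together with Lemma~\ref{potpq}, the commutator relation ties the determinant $a_1b_2-a_2b_1$ to $c_3$, while the conjugation relations, via \eqref{enod} (so that $1+ep\not\equiv 1+dp$), force $c_3\equiv\pm1$ and pin down the $p$-divisibility of the off-diagonal entries $a_2,b_1$. The sign of $c_3$ is precisely $\lambda(\Omega)\in\{\pm1\}$, giving $\ker\lambda=\Delta$ and $[\mathrm{Aut}(\S):\Delta]=2$. Counting the solutions of the congruences yields exactly $2p^{3n-2i+1}(p-1)$ automorphisms, so $Q\to\mathrm{Aut}(\S)$ is an isomorphism, which simultaneously establishes \eqref{autoabc}, the order, and the full presentation. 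The main obstacle is exactly this exhaustive counting: the conjugation relations only give $c_3\equiv\pm1\bmod p^{n-i-1}$, and upgrading this to $\bmod\,p^{i-1}$ when $2i>n$ requires combining the commutator relation with the exact order $p^{n-1}$ of $\Omega(\C)$ to exclude the a priori possible extra automorphisms.

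With the presentation available the structural statements are read off internally. The relations $DAD^{-1}=A^{1+p^{n-i-1}}$, $EAE^{-1}=AB^{p^{n-i-1}}$, $FBF^{-1}=A^{p^{n-i-1}}B$, $GAG^{-1}=A^{g_0}$, $GBG^{-1}=B^{h_0}$ together with the commuting relations show that $\langle D,E,F,G\rangle$ normalizes $\mathrm{Inn}(\S)=\langle A,B,C\rangle$ and meets it trivially, giving $\Delta=\mathrm{Inn}(\S)\rtimes\langle D,E,F,G\rangle$; inside $\langle D,E,F,G\rangle$ the relations $DE=ED$, $DF=FD$, $DG=GD$, $EF=FE$, $GEG^{-1}=E^{h_0^2}$, $GFG^{-1}=F^{g_0^2}$ exhibit the central factor $\langle D\rangle\cong\Z/p^i\Z$ and the complement $(\Z/p\Z\times\Z/p\Z)\rtimes\Z/(p-1)\Z$. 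Adjoining $H$, which has order $2$, inverts $G$, and swaps the $E,F$ data, yields $\mathrm{Aut}(\S)=\Delta\rtimes\langle H\rangle$, and passing to the quotient by $\mathrm{Inn}(\S)$ produces the presentation and the structure \eqref{otre} of $\mathrm{Out}(\S)$.

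Finally I set $\Pi=\langle A,B,C,D,E,F\rangle$ and identify it with $\ker\Gamma$. Since $[\S,\S]\subseteq\S^p$ (as $\D=\C^{p^{i-1}}\in\S^p$), inner automorphisms act trivially on $\S/\S^p$, and the hypothesis $i\ne n-1$ guarantees $\A^{p^{n-i-1}},\C^{p^{n-i-1}}\in\S^p$, so each of the six generators lies in $\ker\Gamma$, giving $\Pi\subseteq\ker\Gamma$. On the other hand $G$ and $H$ map under $\Gamma$ to $\mathrm{diag}(g_0,h_0,1)$ and a signed transposition in $\GL_3(\Z/p\Z)$, spanning an image of order $2(p-1)$; comparing orders forces $\Pi=\ker\Gamma$, a normal $p$-subgroup of order $p^{3n-2i+1}$, which is the full $p$-part of $|\mathrm{Aut}(\S)|$, hence a Sylow $p$-subgroup. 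The six-generator bound is the computation $z(\Pi)=\dim_{\Z/p\Z}\Pi/\Phi(\Pi)=6$: every displayed relation among $A,\dots,F$ places each commutator and each nontrivial power into $\Phi(\Pi)=\Pi^p[\Pi,\Pi]$, so the images of $A,B,C,D,E,F$ are linearly independent in $\Pi/\Phi(\Pi)$ and no fewer generators can suffice.
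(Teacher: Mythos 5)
Your skeleton (verify the five maps are automorphisms, check the listed relations, bound the abstract group $Q$ by $2p^{3n-2i+1}(p-1)$, then match this with a lower bound for $|\mathrm{Aut}(\S)|$) agrees with the paper's, but the step carrying all the weight --- the lower bound --- is asserted rather than proved, and the one concrete claim you make about it is false. You propose to count solutions of the congruences on the exponents of $\Omega(\A),\Omega(\B),\Omega(\C)$, but (i) the count is never performed, and (ii) necessity of the congruences is not enough: to conclude that the number of solutions equals $|\mathrm{Aut}(\S)|$ you must also show that every admissible exponent tuple is actually realized by an automorphism, and nothing in your sketch addresses this sufficiency direction. Worse, your stated key step --- upgrading $c_3\equiv\pm1\pmod{p^{n-i-1}}$ to $c_3\equiv\pm1\pmod{p^{i-1}}$ when $2i>n$ in order to ``exclude the a priori possible extra automorphisms'' --- cannot be correct: the automorphism $D$ of the theorem itself sends $\C\mapsto\C^{1+p^{n-i-1}}$, and when $2i>n$ one has $1+p^{n-i-1}\not\equiv 1\pmod{p^{i-1}}$ (since $n-i-1<i-1$) and $1+p^{n-i-1}\equiv 1\not\equiv-1\pmod{p}$, so $1+p^{n-i-1}\not\equiv\pm1\pmod{p^{i-1}}$. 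Your strengthened congruence would thus exclude a listed generator and undercount $|\mathrm{Aut}(\S)|$ by a factor of $p^{2i-n}$. The paper avoids both problems by normalizing instead of counting: given an arbitrary $\Omega$, it multiplies successively by $H$ (to fix the sign of $r$), by inner automorphisms $\delta_s$ and by $D^l$ (to force $\Omega(\C)=\C$), then by $\delta_s G^l$ (to force $a=j=1$), until $E^uF^v\Omega=1$; this uses only the explicitly constructed automorphisms and never needs to decide which exponent tuples are realizable.

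Two further points. The triviality of $\mathrm{Inn}(\S)\cap\langle D,E,F,G\rangle$ cannot simply be ``read off internally'' from the relations: relations only bound a group from above, and the paper needs this intersection computation \emph{before} the order of $\mathrm{Aut}(\S)$ is known, proving it by evaluating a putative common element $A^jB^kC^m=D^aG^fE^bF^c$ on $\C$, then $\A$, then $\B$. (In your scheme the triviality could be deduced afterwards from the exact order, but that order is exactly what your count fails to deliver.) You also never indicate where the hypothesis $i\neq n-1$ enters your congruence analysis; in the paper it is used at one precise point, to pass from $b\equiv f\equiv 0\pmod{p^{n-i-1}}$ (which needs $n-i-1\geq1$) to the invertibility of $a$ and $j$ modulo $p^{n-i}$. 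Finally, your argument for the six-generator bound is inverted: observing that all relators land in $\Phi(\Pi)$ only shows $\dim_{\Z/p\Z}\Pi/\Phi(\Pi)\leq 6$; to get the lower bound one must exhibit an epimorphism from $\Pi$ onto a $6$-dimensional $\Z/p\Z$-vector space, which is legitimate only after the listed relations are known to be defining, as in the paper's final step.
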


\begin{proof}
That the maps defining $D,E,F,G,H$ actually extend to automorphisms of $\S$ is a routine calculation
involving the defining relations (\ref{sima}) of $\S$ and will be omitted.

The stated relations among $A,B,C,D,E,F,G,H$ are easily verified. Moreover, any group generated
by elements $A$ through $H$ subject to the given relations has order $\leq 2p^{3n-2i+1}(p-1)$.

We claim that $\mathrm{Inn}(\S)\cap\langle D,E,F,G\rangle$
is trivial. Indeed, the defining relations of $\S$ yield
$$
A^j(\C)=\A^{-jep}\C,\; B^j(\C)=\B^{-jdp}\C,\; A^j(\B)=\B \D^j,\;
B^j(\A)=\A \D^{-j},\quad j\in\Z.
$$
A general element of $\mathrm{Inn}(\S)$ (resp. $\langle D,E,F,G\rangle$) has the form $A^jB^k C^m$ (resp.
$D^a G^f E^b F^c$) for some $j,k,m\in\Z$ (resp. $a,b,c,f\in\Z$). Suppose that
$A^jB^k C^m=D^a G^f E^b F^c$. We wish to show that this common element is trivial. On the one hand, we have
$$(A^jB^k C^m)(\C)=\B^{-kdp}\A^{-jep}\D^{-jkdp}\C
$$
and on the other hand
$$
(D^a G^f E^b F^c)(\C)=\C^{(1+p^{n-i-1})^a}.
$$
From the normal form of the elements of $\S$ deduce that $p^{n-i-1}|j$ and $p^{n-i-1}|k$. But then $\D^{-jkdp}=1$
and a fortiori
$$
\C=\C^{(1+p^{n-i-1})^a}.
$$
This forces $D^a=1$. Now
$$
(G^f E^b F^c)(\A)=\A^{g_0^f}\B^{h_0^fbp^{n-i-1}},
$$
while
$$
(A^jB^k C^m)(\A)=\A^{(1+ep)^m} \D^{-k(1+ep)^m}.
$$
This implies that $p|b$, $p^{n-i}|k$, and $(1+ep)^m\equiv g_0^f\mod p^{n-i}$. The first two conditions yield that
$E^b=1$ and $B^k=1$. As for the third, since the order of $(1+ep)^m$ modulo $p^{n-i}$ is a $p$-power and that of $g_0^f$
is a factor of $p-1$, both orders are equal to 1. This forces $G^f=1$ and $C^m=1$. Finally, we have
$$
F^c(\B)=\A^{cp^{n-i-1}}\B,
$$
while
$$
A^j(\B)=\B\D^j.
$$
Thus $p|c$ and $p^{n-i}|j$, whence $F^c=1$ and $A^j=1$. This proves the claim.

We show below that (\ref{autoabc}) holds. Since $\langle A,B,C,D,E,F,G\rangle\subseteq \Delta$ and $H\notin\Delta$,
we infer $$\Delta=\mathrm{Inn}(\S)\rtimes\langle D,E,F,G\rangle,\; \mathrm{Aut}(\S)=\Delta\rtimes\langle H\rangle,\; [\mathrm{Aut}(\S):\Delta]=2.$$
On the other hand, the very definitions of $D,E,F,G$ yield
$$\langle D,E,F,G\rangle\cong \Z/p^{i}\Z\times [(\Z/p\Z\times \Z/p\Z)\rtimes \Z/(p-1)\Z],$$ while Lemma \ref{centro} gives
$$\mathrm{Inn}(\S)\cong \S/Z(\S)\cong (\Z/p^{n-i}\Z\times \Z/p^{n-i}\Z)\rtimes \Z/p^{n-i-1}\Z.$$ Thus
$|\mathrm{Aut}(\S)|=2p^{3n-2i+1}(p-1)$, whence the given relations among $A,B,C,D,E,F,G,H$ must be defining relations,
and $\mathrm{Aut}(\S)=\langle A,B,C,D,E,F\rangle\rtimes (\langle G\rangle\rtimes\langle H\rangle)$ holds. This decomposition
of $\mathrm{Aut}(\S)$ yields (\ref{otre}), whence the given presentation of $\mathrm{Aut}(\S)$ immediately gives one for $\mathrm{Out}(\S)$.

That $\Pi$ is a normal Sylow $p$-subgroup  of $\mathrm{Aut}(\S)$
with the stated defining relations follows directly from above. The very definition of $A,B,C,D,E,F$ places
them in $\ker\Gamma$. Thus $\ker\Gamma=\Pi$ provided we show that $\ker\Gamma\cap (\langle G\rangle\rtimes\langle H\rangle)$
is trivial. To this end, let $l\in\Z$ and suppose, if possible, that $G^l H\in\ker\Gamma$. This implies $\C^2\in\S^p$, whence $\C\in\S^p$, against Proposition \ref{potp}.
Suppose next that $G^l\in\ker\Gamma$. This implies $\A^{{g_0}^l}\in\S^p$, so $\A^{{g_0}^l}=\A^k$ for some multiple $k$ of $p$,
by Proposition \ref{potp}. The order of this common element is a factor of $p-1$ as well as a $p$-power, whence this element is 1,
which implies $G^l=1$. This proves that $\ker\Gamma=\Pi$.
The defining relations of $\Pi$ make it clear that $[\Pi,\Pi]\subseteq \Pi^p$, which
yields an epimorphism $\Pi\to V$, where $V$ is 6-dimensional $\Z/\Z p$-vector space.
Thus $\Pi$ cannot be generated by fewer than 6 elements.

We proceed to show (\ref{autoabc}). Let $\Omega$ be an arbitrary automorphism of $\S$. By Proposition \ref{charac},
\begin{equation}\label{omet}
\Omega(\A)=\A^a\B^b\D^c,\;
\Omega(\B)=\A^f\B^j\D^k,\;
\Omega(\C)=\A^m\B^q\C^r,
\end{equation}
where $a,b,c,f,j,k,m,q,r\in\N$. Since $\Omega$ is an automorphism, we have
\begin{equation}\label{omeq}
{}^{\Omega(\C)}\Omega(\A)=\Omega({}^\C\A)=\Omega(\A)^{1+ep},\;
{}^{\Omega(\C)}\Omega(\B)=\Omega({}^\C\B)=\Omega(\B)^{1+dp}.
\end{equation}
Using (\ref{omet}) and comparing the $\A$-components of each side of (\ref{omeq}) yields
\begin{equation}\label{jorg}
\A^{(1+ep)^r a}=\A^{(1+ep) a},\; \A^{(1+ep)^r f}=\A^{(1+dp) f}.
\end{equation}
By Proposition \ref{charac}, $\Omega$ induces an automorphism
in $\T/Z(\T)$. Let $M\in\GL_2(\Z/p^{n-i}\Z)$ be the matrix of this automorphism with the respect to the basis formed by
$\A,\B$ when taken modulo $Z(\T)$. Then (\ref{omet}) gives
$$
M=\left(\begin{array}{cc} \bar{a} & \bar{b}\\ \bar{f} & \bar{j}\end{array}\right),
$$
where the bar indicates the passage from $\Z$ to $\Z/p^{n-i}\Z$. Since $M$ is invertible, one of $a,f$ is not divisible by $p$, whence
by (\ref{jorg})
$$
(1+ep)^r \equiv 1+ep \mod p^{n-i}\text{ or }(1+ep)^r \equiv 1+dp \mod p^{n-i}.
$$
Thus by (\ref{eqrd2})
$$
(1+ep)^{r-1} \equiv 1\mod p^{n-i}\text{ or }(1+ep)^{r+1} \equiv 1 \mod p^{n-i}.
$$
Since $p\nmid e$, Lemma \ref{uno} ensures that $1+ep$ has order $p^{n-i-1}$ modulo $p^n$, whence
$$
r\equiv 1\mod p^{n-i-1}\text{ or }r\equiv -1\mod p^{n-i-1}.
$$
In the second case, we replace $\Omega$ by $H\Omega$, so we may assume that the first case occurs. Thus
\begin{equation}\label{req2}
r\equiv 1\mod p^{n-i-1}.
\end{equation}
Taking into account (\ref{eqrd2}), we deduce
\begin{equation}\label{req}
(1+ep)^r \equiv 1+ep \mod p^{n-i}\text{ and }(1+dp)^r \equiv 1+dp \mod p^{n-i}.
\end{equation}

In view of (\ref{req}), the automorphisms that
conjugation by $\C^r$ and $\C$ induce on $\T/Z(\T)$ are identical. Let $N$ be the matrix of this automorphism
relative to the same basis used above. Then
$$
N=\left(\begin{array}{cc} \overline{1+ep} & \bar{0}\\ \bar{0} & \overline{1+dp}\end{array}\right).
$$
Using once more that $\Omega$ is an automorphism of $\S$, we see that for any $t\in \T$,  we have
\begin{equation}\label{pat}
{}^{\Omega(\C)}\Omega(t)=\Omega({}^\C t).
\end{equation}
Since conjugation by $\C^r$ and $\C$ induce the same automorphism on $\T/Z(\T)$, (\ref{omet}) and (\ref{pat}) give
$$
{}^{\C}\Omega(t)\, Z(\T)=\Omega({}^\C t)\, Z(\T).
$$
In matrix terms, this means that
$$
NM=MN,
$$
which is equivalent to
\begin{equation}\label{pep}
b(e-d)p\equiv 0\mod p^{n-i},\; f(e-d)p\equiv 0\mod p^{n-i}.
\end{equation}
It follows from (\ref{enod}) and (\ref{pep}) that
\begin{equation}\label{eqbf}
b\equiv 0\equiv f\mod p^{n-i-1}.
\end{equation}
We now use for the first time the hypothesis $i<n-1$. Since $M$ is invertible, (\ref{eqbf}) implies
\begin{equation}\label{eqaj}
\overline{a},\overline{j}\in (\Z/p^{n-i}\Z)^\times.
\end{equation}

Using (\ref{omet}) and (\ref{req}), and carefully comparing the $\D$-components of each side of (\ref{omeq}) gives
\begin{equation}\label{eqcc}
\D^{-(1+ep)aq}\D^{(1+dp)bm}\D^{c}=\D^{-abep(ep+1)/2}\D^{c(1+ep)},\;
\D^{-(1+ep)fq}\D^{(1+dp)jm}\D^{k}=\D^{-fjdp(dp+1)/2}\D^{k(1+dp)}.
\end{equation}
We claim that
\begin{equation}\label{equ}
q\equiv 0\equiv m\mod p.
\end{equation}
Indeed, using (\ref{eqbf}), that $n-1>i$, and that $e(ep+1)$ (resp. $d(dp+1)$) is even, we deduce from the first
(resp. second) equality in (\ref{eqcc}) that $\D^{aq}$ (resp.  $\D^{jm}$) is in $\langle \D^p\rangle$.
Thus (\ref{eqaj}) yields $q\equiv 0\mod p$ (resp. $m\equiv 0\mod p$). This proves the claim.

On the other hand, in view of (\ref{req}), we have
\begin{equation}\label{oja}
\A\C^r\A^{-1}=\A^{-ep}\C^r,\; \B\C^r\B^{-1}=\B^{-dp}\C^r.
\end{equation}
We deduce from (\ref{omet}), (\ref{equ}), and (\ref{oja}) that if we replace $\Omega$ by $\delta_s\Omega$,
where $\delta_s\in\mathrm{Inn}(\S)$ for a suitable $s\in \T$, the following will hold in (\ref{omet}):
\begin{equation}\label{qmcero}
q=0=m.
\end{equation}

Making use of (\ref{req2}) and replacing $\Omega$ by $D^l\Omega$ for a suitable $l\in\Z$, we will have
$q=0=m$ and $r=1$ in (\ref{omet}), so that
\begin{equation}\label{eqnu}
\Omega(\C)=\C.
\end{equation}
Taking into account that
$$
\D^{aj-bf}=[\Omega(\A),\Omega(\B)]=\Omega([\A,\B])=\Omega(\D)=\D,
$$
and appealing to (\ref{eqbf}), we infer
\begin{equation}\label{eqajota}
aj\equiv 1\mod p^{n-i}.
\end{equation}
Going back to (\ref{eqcc}) and making use of (\ref{qmcero}), we see that
$$
\D^{abep(ep+1)/2}=\D^{cep},\;
\D^{fjdp(dp+1)/2}=\D^{kdp}.
$$
But $d(dp+1)$ and $e(ep+1)$ are even, so (\ref{eqbf}) yields
$$
\D^{cep}=1=\D^{kdp}.
$$
Since $d$ and $e$ are not divisible by $p$, we deduce
\begin{equation}\label{eqcks}
c\equiv 0\equiv k\mod p^{n-i-1}.
\end{equation}
Now $\A^{p^{n-i-1}}$ and $\B^{p^{n-i-1}}$ commute with $\C$ and with each other, so replacing $\Omega$ by
$\delta_s\Omega$, where $\delta_s\in\mathrm{Inn}(\S)$ for a suitable $s\in \T$,
(\ref{eqbf}) and (\ref{eqcks}) imply that (\ref{eqnu}) still holds and we further have
$$
c=0=k
$$
in (\ref{omet}).

We next replace $\Omega$ by $\delta_s G^l\Omega$, for a suitable $l\in\Z$ and $\delta_s\in\mathrm{Inn}(\S)$, with
$s\in \langle \C\rangle$.
This will keep (\ref{eqnu}) and $c=0=k$, and, because of (\ref{eqajota}), will further achieve $a=1=j$ in (\ref{omet}).
Taking into account~(\ref{eqbf}), we see that $E^uF^v\Omega=1$ for suitable $u,v\in\Z$. This proves (\ref{autoabc}),
completing the proof of the theorem.
\end{proof}


\begin{prop}\label{kerla} Let $\Lambda:\mathrm{Aut}(\S)\to \mathrm{Aut}(\T)$ be the restriction homomorphism ensured by
Proposition \ref{charac}. Then $\ker\Lambda$ is the cyclic group of order $p^{i-1}$ generated by
$$
\A \mapsto \A,\;\B\mapsto\B,\; \C\mapsto \C^{1+p^{n-i}}.
$$
\end{prop}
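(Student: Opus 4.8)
The plan is to name the stated map $L\colon \A\mapsto\A,\ \B\mapsto\B,\ \C\mapsto\C^{1+p^{n-i}}$ and argue in two directions: first that $\langle L\rangle\subseteq\ker\Lambda$ with $|\langle L\rangle|=p^{i-1}$, and then the reverse inclusion by bounding $|\ker\Lambda|$ from above by $p^{i-1}$.

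First I would check that $L$ respects the defining relations (\ref{sima}) and so extends to an endomorphism of $\S$. The relations $\A^{p^{n-i}}=\B^{p^{n-i}}=1$ are untouched; for $\C^{p^{n-1}}=1$ and $[\A,\B]=\C^{p^{i-1}}$ one expands $(\C^{1+p^{n-i}})^{p^{n-1}}$ and $(\C^{1+p^{n-i}})^{p^{i-1}}$ and uses $\C^{p^{n-1}}=1$, the latter because $(1+p^{n-i})p^{i-1}=p^{i-1}+p^{n-1}$. For the two conjugation relations, Lemma \ref{uno} shows that $1+ep$ and $1+dp$ have order $p^{n-i-1}$ modulo $p^{n-i}$, so $\C^{p^{n-i}}$ centralizes $\A$ and $\B$ and hence conjugation by $\C^{1+p^{n-i}}$ agrees with conjugation by $\C$ on $\A,\B$. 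Since $1+p^{n-i}$ is prime to $p$, the element $\C^{1+p^{n-i}}$ generates $\langle\C\rangle$, so $L$ is onto, hence an automorphism. Applying Lemma \ref{uno} once more gives that $1+p^{n-i}$ has order exactly $p^{i-1}$ modulo $p^{n-1}$, so $L$ has order $p^{i-1}$. As $L$ fixes $\A,\B,\D$, it restricts to the identity on $\T$, whence $\langle L\rangle\subseteq\ker\Lambda$.

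For the reverse inclusion, I would take any $\Omega\in\ker\Lambda$, so that $\Omega$ fixes $\T$ pointwise; in particular $\Omega(\A)=\A$, $\Omega(\B)=\B$, $\Omega(\D)=\D$, and $\Omega$ is determined by $\Omega(\C)$. Applying $\Omega$ to $\C\A\C^{-1}=\A^{1+ep}$ and $\C\B\C^{-1}=\B^{1+dp}$ shows that $g:=\C^{-1}\Omega(\C)$ centralizes $\A$ and $\B$, hence all of $\T=\langle\A,\B\rangle$. I would then compute $C_\S(\T)$ directly from the normal form of Proposition \ref{heipres2}: writing a $\T$-centralizing element as $\A^x\B^y\C^w$ and using that $\D$ is central of order $p^{n-i}$ together with the orders of $1+ep,1+dp$ modulo $p^{n-i}$, one finds $x\equiv y\equiv 0$ and $p^{n-i-1}\mid w$, so $C_\S(\T)=\langle\C^{p^{n-i-1}}\rangle=Z(\S)$ (cf.\ Lemma \ref{centro}). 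Thus $\Omega(\C)=\C^{1+tp^{n-i-1}}$ for some $t\in\Z$. Finally, feeding this into $\Omega(\C)^{p^{i-1}}=\Omega(\D)=\D=\C^{p^{i-1}}$ gives $\C^{tp^{n-2}}=1$, that is $p\mid t$, so $\Omega(\C)=\C^{1+sp^{n-i}}$.

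It then remains to count: the elements $\C^{1+sp^{n-i}}$ take exactly $p^{i-1}$ distinct values as $s$ varies, since they depend only on $s$ modulo $p^{i-1}$; hence $|\ker\Lambda|\le p^{i-1}$. Combined with $\langle L\rangle\subseteq\ker\Lambda$ and $|\langle L\rangle|=p^{i-1}$, this forces $\ker\Lambda=\langle L\rangle$, cyclic of order $p^{i-1}$. I expect the main obstacle to be the centralizer computation $C_\S(\T)=\langle\C^{p^{n-i-1}}\rangle$ and, relatedly, the final pinning down of the exponent: the conjugation relations alone only give $\Omega(\C)=\C^{1+tp^{n-i-1}}$, and it is exactly the commutator relation $[\A,\B]=\C^{p^{i-1}}$ that supplies the extra factor of $p$ and singles out the generator $\C\mapsto\C^{1+p^{n-i}}$ rather than $\C\mapsto\C^{1+p^{n-i-1}}$.
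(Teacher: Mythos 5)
Your proposal is correct and follows essentially the same route as the paper: both pin down $\Omega(\C)$ by applying $\Omega$ to the relations $\C\A\C^{-1}=\A^{1+ep}$ and $\C\B\C^{-1}=\B^{1+dp}$ (your centralizer computation $C_\S(\T)=\langle\C^{p^{n-i-1}}\rangle$ is exactly the paper's step showing, for $\Omega(\C)=t\C^s$ with $t\in\T$, that $t\in Z(\T)=\langle\D\rangle$ and $s\equiv 1 \bmod p^{n-i-1}$), and then both use $\Omega(\D)=\D$ to upgrade the congruence to $1+sp^{n-i}$. The only difference is organizational: you spell out the routine direction---that $L$ extends to an automorphism of order $p^{i-1}$ fixing $\T$ pointwise---and close by counting, steps the paper leaves implicit.
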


\begin{proof} Let $\Omega\in \ker\Lambda$. Then $\Omega(\C)=t\C^s$, where $t\in \T$ and $s\in\N$.
Since $\Omega$ is the identity on $\T$,
$$
\A^{1+ep}=\Omega(\A^{1+ep})=\Omega(\C\A\C^{-1})=\Omega(\C)\Omega(\A)\Omega(\C^{-1})=
t\C^s\A\C^{-s}t^{-1}=t\A^{(1+ep)^s}t^{-1}.
$$
Since conjugation by $t$ sends $\A$ to itself times some power of $\D$, this forces
$$
(1+ep)^s\equiv (1+ep)\mod p^{n-i},
$$
so
$$
(1+ep)^{s-1}\equiv 1\mod p^{n-i}.
$$
As $p\nmid e$, Lemma \ref{uno} yields
$$
s\equiv 1\mod p^{n-i-1}.
$$
We also deduce that $t$ must commute with $\A^{1+ep}$
and hence with $\A$. A like calculation with $\B$ instead of $\A$ forces $t$ to commute with $\B$,
whence $t\in Z(\T)=\langle\D\rangle$. Since $\D=\C^{p^{i-1}}$ and $i-1\geq n-i-1$, we infer
$\Omega(\C)=\C^k$, where $k\equiv 1\mod p^{n-i-1}$. But $\Omega$ must fix $\D$ as well, so in fact $k\equiv 1\mod p^{n-i}$.
\end{proof}


Note that if $i\neq n-1$ then, in the notation of Theorem \ref{mainresult}, we have $\ker\Lambda=\langle D^p\rangle$.




\begin{theorem}\label{lindop} Suppose $i=n-1$, so that $\S$ be the group of order $p^{n+1}$ generated by $\A,\B,\C$, with defining relations
$$
\A^p=1,\B^p=1,\C^{p^{n-1}}=1, [\A,\B]=\C^{p^{n-2}}, \A\C=\C\A, \B\C=\C\B,
$$
and $\T\cong\Heis(\Z/p\Z)$ is the subgroup of $\S$ of order~$p^3$ generated by
$\A,\B$ and $\C^{p^{n-2}}=\D$.

Let $\Lambda:\mathrm{Aut}(\S)\to \mathrm{Aut}(\T)$ be the restriction homomorphism whose existence is ensured by
Proposition \ref{charac}. Then
$$
\mathrm{Aut}(\S)\cong\ker\Lambda\times \mathrm{Im}\Lambda,
$$
where $\ker\Lambda$ is a cyclic group of order $p^{n-2}$ and $\mathrm{Im}\Lambda=\mathrm{Aut}(\T)$ is an extension
of $\Z/p\Z\oplus\Z/p\Z$ by $\GL_2(\Z/p\Z)$. More precisely, let
$A,B\in\mathrm{Inn}(\S)$ be the inner automorphisms of $\S$ associated with $\A,\B$, respectively.
Let $C,D,E,F,G\in \mathrm{Aut}(\S)$ be respectively defined by
$$
\A\mapsto\A,\; \B\mapsto\A\B,\; \C\mapsto \C,
$$
$$
\A\mapsto\B,\; \B\mapsto\A^{-1},\; \C\mapsto \C,
$$
$$
\A\mapsto\A^r,\; \B\mapsto\B^{s},\; \C\mapsto \C,
$$
$$
\A\mapsto\A^r,\; \B\mapsto\B,\; \C\mapsto \C^r,
$$
$$
\A \mapsto \A,\;\B\mapsto\B,\; \C\mapsto \C^{1+p},
$$
where $r$ is any integer of order $p-1$ modulo $p^{n-1}$ (and hence modulo $p$), with inverse $s$ modulo~$p^{n-1}$,
(and hence modulo $p$), both of which are taken to be odd (as we are allowed to do), and further let $t=r(r-1)/2$.
Then $\ker\Lambda=\langle G\rangle$ and $\mathrm{Aut}(\S)$ is generated by $A,B,C,D,E,F,G$, with defining relations
$$
A^p=1,B^p=1,AB=BA,
$$
$$
C^p=1,D^2=E^{(p-1)/2}, E^{p-1}=1, F^{p-1}=1, G^{p^{n-2}}=1, DED^{-1}=E^{-1}, DFD^{-1}=E^{-1}F,
$$
$$
EF=FE, ECE^{-1}=A^{-t}C^{r^2}, FCF^{-1}=C^r,
$$
$$
A^{(r^k-1)/2} B^{-(s^k+1)/2} DC^{s^k} D= E^{k} C^{-s^k}DC^{-r^k},\quad 0\leq k<p-1,
$$
$$
CAC^{-1}=A, CBC^{-1}=AB, DAD^{-1}=B, DBD^{-1}=A^{-1}, EAE^{-1}=A^r,
$$
$$
EBE^{-1}=B^s, FAF^{-1}=A^r, FBF^{-1}=B,
$$
$$
AG=GA, BG=GB, CG=GC, DG=GD, EG=GE, FG=GF.
$$
Moreover, $\mathrm{Aut}(\T)$ is generated by the restrictions to $\T$ of $A$ through $F$, say $A_0$ through~$F_0$,
respectively, with the above defining relations, mutatis mutandi (this means that we replace $A$ through $F$ by $A_0$ through $F_0$
and we remove all relations involving $G$). Furthermore,
$$
|\mathrm{Aut}(\S)|=p^{n+1}(p-1)^2(p+1),\; |\mathrm{Aut}(\T)|=p^3(p-1)^2(p+1).
$$
\end{theorem}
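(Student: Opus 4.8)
The restriction homomorphism $\Lambda:\mathrm{Aut}(\S)\to\mathrm{Aut}(\T)$ is available because $2i=2(n-1)\geq n$ forces $\T$ to be characteristic in $\S$ (Proposition \ref{charac}), and Proposition \ref{kerla} already identifies $\ker\Lambda$: since here $n-i=1$, its generator $\C\mapsto\C^{1+p^{n-i}}$ is exactly $G$, so $\ker\Lambda=\langle G\rangle$ is cyclic of order $p^{i-1}=p^{n-2}$. The plan is to first compute $\mathrm{Aut}(\T)$ and show that $\Lambda$ is onto, so that $\mathrm{Im}\Lambda=\mathrm{Aut}(\T)$ and $|\mathrm{Aut}(\S)|=|\ker\Lambda|\cdot|\mathrm{Aut}(\T)|$; then to prove that the central extension $1\to\langle G\rangle\to\mathrm{Aut}(\S)\to\mathrm{Aut}(\T)\to1$ is a direct product; and finally to verify the displayed relations and check that they force the right order, so that they are defining.

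For the first step I would use that $\T\cong\Heis(\Z/p\Z)$ is extraspecial of exponent $p$, with $Z(\T)=\langle\D\rangle\cong\Z/p\Z$ and $\T/Z(\T)\cong(\Z/p\Z)^2$. Each $\phi\in\mathrm{Aut}(\T)$ induces $\bar\phi\in\GL(\T/Z(\T))=\GL_2(\Z/p\Z)$, and since $[\,\cdot\,,\cdot\,]$ is a nondegenerate alternating form into $Z(\T)$, the action of $\phi$ on $Z(\T)$ is multiplication by $\det\bar\phi$; hence the kernel of $\mathrm{Aut}(\T)\to\GL_2(\Z/p\Z)$ is exactly the set of maps $\A\mapsto\A\D^a,\ \B\mapsto\B\D^b$, which is $\mathrm{Inn}(\T)\cong(\Z/p\Z)^2$. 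Because $\T$ has exponent $p$ and its only nontrivial commutator is governed by $\det$, every $M\in\GL_2(\Z/p\Z)$ lifts, so $1\to(\Z/p\Z)^2\to\mathrm{Aut}(\T)\to\GL_2(\Z/p\Z)\to1$ is exact and $|\mathrm{Aut}(\T)|=p^2\cdot p(p-1)^2(p+1)=p^3(p-1)^2(p+1)$. To see that $\Lambda$ is onto I would restrict $A,\dots,F$ to $\T$: here $A_0,B_0$ generate $\mathrm{Inn}(\T)$; $C_0,D_0$ map to the transvection $\left(\begin{smallmatrix}1&1\\0&1\end{smallmatrix}\right)$ and the Weyl element $\left(\begin{smallmatrix}0&-1\\1&0\end{smallmatrix}\right)$, which together generate $\SL_2(\Z/p\Z)$; and $F_0$ maps to $\left(\begin{smallmatrix}r&0\\0&1\end{smallmatrix}\right)$, whose determinant $r$ has order $p-1$, supplying all of $\GL_2(\Z/p\Z)$. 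Thus $\Lambda(\langle A,\dots,F\rangle)=\mathrm{Aut}(\T)$, whence $\mathrm{Im}\Lambda=\mathrm{Aut}(\T)$, $\mathrm{Aut}(\S)=\langle A,\dots,F\rangle\langle G\rangle=\langle A,\dots,G\rangle$, and $|\mathrm{Aut}(\S)|=p^{n-2}\cdot p^3(p-1)^2(p+1)=p^{n+1}(p-1)^2(p+1)$.

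For the presentation I would verify that $A,\dots,G$ satisfy all the displayed relations by evaluating both sides on $\A,\B,\C$, obtaining an epimorphism $\pi$ from the abstractly presented group $\Gamma$ onto $\mathrm{Aut}(\S)$, and then bound $|\Gamma|\leq p^{n+1}(p-1)^2(p+1)$ from the relations; by the order computed above, $\pi$ is then an isomorphism and the relations are defining. The direct-product structure drops out of the presentation: the relations $AG=GA,\dots,FG=GF$ make $G$ central, and the assignment sending $G$ to a generator of $\Z/p^{n-2}\Z$ and $A,\dots,F$ to $0$ respects every relation (those free of $G$ become $0=0$, those involving $G$ survive since $G$ has order $p^{n-2}$), giving a retraction $\rho:\mathrm{Aut}(\S)\to\langle G\rangle$. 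Then $\ker\rho=\langle A,\dots,F\rangle$ is a complement to the central subgroup $\langle G\rangle$, so $\mathrm{Aut}(\S)=\langle A,\dots,F\rangle\times\langle G\rangle$; as $\langle A,\dots,F\rangle\cap\ker\Lambda=\ker\rho\cap\langle G\rangle=1$ and $\Lambda(\langle A,\dots,F\rangle)=\mathrm{Aut}(\T)$, the map $\Lambda$ carries $\langle A,\dots,F\rangle$ isomorphically onto $\mathrm{Aut}(\T)$. This is precisely $\mathrm{Aut}(\S)\cong\ker\Lambda\times\mathrm{Im}\Lambda$ with $\ker\Lambda\cong\Z/p^{n-2}\Z$, and the asserted presentation of $\mathrm{Aut}(\T)$ is just the restriction of the $G$-free relations, mutatis mutandis.

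The hard part is the block of Bruhat-type relations, above all
$$
A^{(r^k-1)/2}B^{-(s^k+1)/2}DC^{s^k}D=E^kC^{-s^k}DC^{-r^k},\qquad 0\leq k<p-1,
$$
together with $ECE^{-1}=A^{-t}C^{r^2}$ ($t=r(r-1)/2$) and $D^2=E^{(p-1)/2}$. At the level of $\GL_2(\Z/p\Z)$ these are the torus, unipotent and Weyl relations of a Bruhat presentation (one checks, for instance, that both sides of the displayed relation equal $\left(\begin{smallmatrix}-1&0\\ s^k&-1\end{smallmatrix}\right)$), but lifting them to $\mathrm{Aut}(\T)$ forces the inner-automorphism correction terms $A^{(r^k-1)/2}B^{-(s^k+1)/2}$ and $A^{-t}$, whose exponents must be pinned down by tracking the central $\langle\D\rangle$-component as automorphisms are composed; the parity conventions on $r,s$ are exactly what make these exponents integers. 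Confirming that the Bruhat relations, the Borel relations among $C,E,F$, and the inner relations among $A,B$ together put an arbitrary word into a normal form indexed by the $p^3(p-1)^2(p+1)$ elements of $\mathrm{Aut}(\T)$ (hence yielding the order bound for $\Gamma$) is where the bulk of the bookkeeping lies.
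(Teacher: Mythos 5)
Your proposal follows the same route as the paper in every structural respect: $\ker\Lambda=\langle G\rangle$ via Proposition \ref{kerla}; the identification of $\mathrm{Aut}(\T)$ as an extension of $\mathrm{Inn}(\T)\cong\Z/p\Z\oplus\Z/p\Z$ by $\GL_2(\Z/p\Z)$, with surjectivity read off from the matrices that $C_0,D_0,E_0,F_0$ induce on $\T/Z(\T)$; the resulting order counts; and the scheme ``verify the relations, then bound the order of the abstractly presented group.'' Your one genuine departure is the splitting argument: you obtain $\mathrm{Aut}(\S)=\langle A,\dots,F\rangle\times\langle G\rangle$ from a retraction $\rho$ of the presented group onto $\Z/p^{n-2}\Z$ (send $G$ to a generator and $A,\dots,F$ to $0$), whereas the paper argues directly that $\langle A,\dots,F\rangle\cap\langle G\rangle$ is trivial: every element of $\langle A,B,C,D,E\rangle$ fixes $\C$, $F(\C)=\C^r$, $G(\C)=\C^{1+p}$, and $r$ and $1+p$ generate subgroups of $(\Z/p^{n-1}\Z)^\times$ of coprime orders $p-1$ and $p^{n-2}$. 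Your version is clean, but note that it depends on the presentation being defining, while the paper's does not.

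That is exactly where the gap lies: you never prove the bound $|\Gamma|\le p^{n+1}(p-1)^2(p+1)$; you only announce that the relations ``put an arbitrary word into a normal form'' and defer the rest as bookkeeping. This bound is the mathematical core of the theorem --- without it nothing you wrote shows that the listed relations are defining, and your direct-product argument, built on the presentation, inherits the gap (though it could be repaired along the paper's lines). The paper closes it with a short Bruhat-cell argument which your sketch would need to reproduce: in the presented group $P$, the subgroup $\langle A,B,G\rangle$ is normal of order at most $p^n$; in $Q=P/\langle A,B,G\rangle$, writing $c,u,v,f$ for the images of $C,D,E,F$, the subgroup $N=\langle c,v,f\rangle$ has order at most $p(p-1)^2$, since $v,f$ commute and normalize $\langle c\rangle$ by the relations $ECE^{-1}=A^{-t}C^{r^2}$ and $FCF^{-1}=C^r$; one then checks that $Y=N\cup NuN$ is a subgroup of $Q$, using $u^{-1}=u\cdot u^2$ with $u^2=v^{(p-1)/2}\in N$, the fact that $u$ conjugates $v,f$ into $N$, and --- this is precisely what the relations indexed by $0\le k<p-1$ are for --- the identities $uc^{s^k}u=v^kc^{-s^k}uc^{-r^k}\in NuN$, which together with $uc^0u=u^2\in N$ give $u\langle c\rangle u\subseteq Y$ because the powers $s^k$ exhaust $(\Z/p\Z)^\times$. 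Hence $Q=Y$ and $|Q|\le|N|+|N|\,|\langle c\rangle|\le p(p-1)^2(p+1)$, so $|P|\le p^n\cdot p(p-1)^2(p+1)$ as required. Your plan is compatible with this argument (and your $\GL_2$-level check that both sides of the Bruhat relation equal the same matrix is correct), but as submitted the decisive step is missing.
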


\begin{proof} That the maps defining $A$ through $G$ extend to automorphisms of $\S$ follows from the defining relations of $\S$.

We have $\ker\Lambda=\langle G\rangle$, a group of order $p^{n-2}$, by Proposition \ref{kerla}.
Let $Z$ be the subgroup of $\mathrm{Aut}(\T)$  generated by $A_0$ through $F_0$. We claim that $Z=\mathrm{Aut}(\T)$.
Indeed, we have a homomorphism $\Gamma:\mathrm{Aut}(\T)\to \mathrm{Aut}(\T/Z(\T))$ whose kernel is easily seen to be
$\mathrm{Inn}(\T)$. Relative to the basis $\{\A Z(\T),\B Z(\T)\}$ of the $\Z/p\Z$-vector space
$\T/Z(\T)$, the
matrices of $\Gamma(C_0)$ through $\Gamma(F_0)$ are respectively given by
$$
\left(\begin{array}{cc} 1 & 1\\ 0 & 1\end{array}\right), \left(\begin{array}{cc} 0 & -1\\ 1 & 0\end{array}\right),
\left(\begin{array}{cc} r & 0\\ 0 & r^{-1}\end{array}\right), \left(\begin{array}{cc} r & 0\\ 0 & 1\end{array}\right).
$$
As these matrices generate $\GL_2(\Z/p\Z)$, we see that the restriction of $\Gamma$ to $Z$ is surjective. Since
$\mathrm{Inn}(\T)\subset Z$, it follows that $Z=\mathrm{Aut}(\T)$. We have shown, in particular, that
$\mathrm{Aut}(\T)$ is an extension of $\ker\Gamma\cong\Z/p\Z\oplus\Z/p\Z$ by $\mathrm{Im}\Gamma\cong\GL_2(\Z/p\Z)$.

The verification of the given relations among $A$ through $G$ is a routine calculation and will be omitted. We next
show that these are defining relations.

Let $P$ be the abstract group generated by elements $A$ through $G$ subject to the given relations. It suffices to show
that $|P|\leq p^{n+1}(p-1)^2(p+1)$. In this respect, it is clear that $\langle A,B,G\rangle$ is a normal subgroup of $P$
of order $\leq p^n$. Let $Q=P/\langle A,B,G\rangle$ and let $c,u,v,f$ be the images of $C,D,E,F$,  respectively,
under the canonical projection $P\to Q$. We further let $N=\langle c,v,f\rangle$. Then $v,f$ commute and normalize
$\langle c\rangle$, so $|N|\leq p(p-1)^2$. We claim that $Q=N\cup NuN$. This means that $Y=N\cup NuN$ is a subgroup of $Q$.
Now $u^{-1}=u^3=uu^2$, with $u^2\in N$, so $Y$ is closed under inversion. That $Y$ is closed
under multiplication is respectively equivalent to $NuNuN\subset Y$, $uNu\subset Y$, and $uNu^{-1}\subset Y$. Now
$u$ conjugates $v,f$ back into $N$, and $u\langle c\rangle u\subset Y$ by the given relations, so $Y$ is closed
under multiplication. Since $NuN=Nu\langle c\rangle$, we have
$$
|NuN|\leq |N||\langle c\rangle|=p^2(p-1)^2,
$$
and therefore
$$
|Q|\leq |N|+|NuN|\leq p(p-1)^2+p^2(p-1)^2=p(p-1)^2(p+1),
$$
whence
$$
|P|=|\langle A,B,G\rangle||Q|\leq p^n p(p-1)^2(p+1)=p^{n+1}(p-1)^2(p+1),
$$
as required.  This shows that the stated relations are defining relations for $\mathrm{Aut}(\S)$.

Now, $F$ normalizes $\langle A,B,C,D,E\rangle$, every element of this group fixes $\C$,
while $F(\C)=\C^r$ and $G(\C)=\C^{1+p}$. Since $r$ has order $p-1$ modulo $p^{n-1}$ and $1+p$ has order $p^{n-2}$
modulo $p^{n-1}$ (where $p^{n-1}$ is the order of $\C$), it follows that the intersection of
 $\langle A,B,C,D,E,F\rangle=\langle A,B,C,D,E\rangle\rtimes \langle F\rangle$ with $\langle G\rangle$
is trivial, so the restriction of $\Lambda$ to $\langle A,B,C,D,E,F\rangle$ is an isomorphism and, since $G$
is central, we have $\mathrm{Aut}(\S)=\langle G\rangle\times \langle A,B,C,D,E,F\rangle$.
\end{proof}


\section{The automorphism group of $\S$ when $2i<n$}\label{rojo2}

We assume throughout this section that $2i<n$.

\begin{lemma}\label{centro2i} The center of $\S$ is generated by
$\D$ and $\B^{p^{n-i-1}}$.
\end{lemma}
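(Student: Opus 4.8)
The plan is to follow the pattern of Lemma \ref{centro}, while keeping careful track of the fact that in the present range $2i<n$ the center of the Heisenberg factor $\T$ is strictly larger than $\langle\D\rangle$. Recall from Proposition \ref{heipres} that here $\T\cong\Heis(\Z/p^{n-i}\Z,\Z/p^i\Z)$, that $\D=\C^{p^{n-i-1}}$ has order $p^i$, that $[\T,\T]=\langle\D\rangle\subseteq Z(\T)$, and that every element of $\T$ is uniquely $\A^x\B^y\D^w$.

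First I would show $Z(\S)\subseteq\T$. Given $z\in Z(\S)$, write $z=\C^r t$ with $t\in\T$. Since $[\T,\T]=\langle\D\rangle$, conjugation by $t$ fixes $\B$ up to a power of $\D$, say ${}^t\B=\B\D^b$; applying ${}^{\C^r}$ and using $\C\B\C^{-1}=\B^{1+dp}$ together with $\sigma(\D)=\D$ gives ${}^z\B=\B^{(1+dp)^r}\D^b$. As $z$ commutes with $\B$, uniqueness of the normal form forces $(1+dp)^r\equiv 1\pmod{p^{n-i}}$ and $\D^b=1$. Because $p\nmid d$, Lemma \ref{uno} shows that $1+dp$ has order $p^{n-i-1}$ modulo $p^{n-i}$, so $p^{n-i-1}\mid r$; since $\C^{p^{n-i-1}}=\D\in\T$, this yields $\C^r\in\langle\D\rangle$ and hence $z\in\T$.

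Next, writing $z=\A^x\B^y\D^w$ and using that the bracket is a homomorphism in each variable (as $[\T,\T]$ is central), one computes $[z,\B]=\D^{x}$ and $[z,\A]=\D^{-y}$; commuting with $\A$ and $\B$ therefore forces $p^i\mid x$ and $p^i\mid y$, that is $\A^x=1$ and $z\in\langle\B^{p^i},\D\rangle=Z(\T)$. It then remains to impose commutation with $\C$: since ${}^\C(\B^y\D^w)=\B^{(1+dp)y}\D^w$, equality with $z$ gives $(1+dp)y\equiv y\pmod{p^{n-i}}$, i.e. $dpy\equiv 0\pmod{p^{n-i}}$, whence $p^{n-i-1}\mid y$ as $p\nmid d$. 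Thus $\B^y\in\langle\B^{p^{n-i-1}}\rangle$ and $z\in\langle\B^{p^{n-i-1}},\D\rangle$. For the reverse inclusion I would check that both generators are central: $\D$ lies in $Z(\T)$ and commutes with $\C$, while $\B^{p^{n-i-1}}$ lies in $Z(\T)$ precisely because $2i<n$ forces $n-i-1\ge i$, and it is fixed by conjugation by $\C$ since $(1+dp)p^{n-i-1}\equiv p^{n-i-1}\pmod{p^{n-i}}$.

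The only genuinely new point compared with the case $2i\ge n$ handled in Lemma \ref{centro} is that $Z(\T)$ now contains the extra cyclic factor $\langle\B^{p^i}\rangle$, so I expect the main work to lie in seeing exactly how conjugation by $\C$ cuts this factor down from $\langle\B^{p^i}\rangle$ to $\langle\B^{p^{n-i-1}}\rangle$; everything else reduces to the normal form of Proposition \ref{heipres2} and the order computation of Lemma \ref{uno}.
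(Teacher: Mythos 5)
Your proof is correct and follows essentially the same route as the paper: decompose $z=\C^r t$, use the normal form together with Lemma \ref{uno} to force $p^{n-i-1}\mid r$, identify the relevant central elements of $\T$ (the paper cites $Z(\T)=\langle\D,\B^{p^i}\rangle$ from the Heisenberg description where you re-derive it by commutator computations), and finally cut the factor $\langle\B^{p^i}\rangle$ down to $\langle\B^{p^{n-i-1}}\rangle$ by imposing commutation with $\C$. The differences are purely organizational, so there is nothing to add.
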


\begin{proof} Let $z\in Z(\S)$. Then $z=\C^r t$, where $t\in \T$. The first part of the proof of Lemma~\ref{centro}
shows that $t\in Z(\T)$, that $\C^r$ commutes with $\A$ and $\B$, and that $p^{n-i-1}|r$. In this case,
$Z(\T)=\langle \D,\B^{p^i} \rangle$.  As $\D=\C^{p^{n-i-1}}$, we infer
$Z(\S)\subseteq \langle \B^{p^i}, \D \rangle$.  Here $\D$ actually belongs to $Z(\S)$,
so $Z(\S)=\langle \B^{p^j}, \D \rangle$, where $j\geq i$ is large enough so that $\C$
commutes with $\B^{p^j}$, that is, $j\geq n-i-1$. Since $n-i-1\geq i$, the proof is complete.
\end{proof}

\begin{lemma}\label{sumalinda} Let $a,b\in\N$ and $z\in\Z$ be such that
\begin{equation}\label{hipo}
z^{p^a}\equiv 1\mod p^b.
\end{equation}
Then
$$
1+z+z^2+\cdots+z^{p^a-1}\equiv p^a\mod p^b.
$$
\end{lemma}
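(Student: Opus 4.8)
The plan is to work with the $p$-adic valuation $v_p$ and to exploit the geometric series identity together with Lemma \ref{uno}. First I would dispose of the trivial cases: if $b=0$ both sides agree modulo $p^0=1$, and if $z=1$ the left-hand sum equals $p^a$ exactly, so there is nothing to prove. Assume therefore $b\geq 1$ and $z\neq 1$. Since $z^{p^a}\equiv 1\pmod p$ while Fermat's little theorem gives $z^{p^a}\equiv z\pmod p$, we obtain $z\equiv 1\pmod p$; hence we may write $z=1+\ell p^s$ with $p\nmid\ell$ and $s=v_p(z-1)\geq 1$. Writing the sum as
$$
S:=1+z+\cdots+z^{p^a-1}=\frac{z^{p^a}-1}{z-1},
$$
the assertion $S\equiv p^a\pmod{p^b}$ becomes the valuation estimate $v_p(S-p^a)\geq b$.

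Next I would extract the needed valuations from Lemma \ref{uno}. Applying it with $c=1$, base exponent $s\geq 1$, and outer exponent $a\geq 0$ yields $z^{p^a}=(1+\ell p^s)^{p^a}\equiv 1+\ell p^{s+a}\pmod{p^{2s+a}}$. Because $p\nmid\ell$ and $s+a<2s+a$, this pins down the exact value $v_p(z^{p^a}-1)=s+a$. The hypothesis $z^{p^a}\equiv 1\pmod{p^b}$ says precisely that $v_p(z^{p^a}-1)\geq b$, so
$$
s+a\geq b,
$$
which is the key inequality tying $s$ to $b$.

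Finally I would estimate the numerator of $S-p^a$. Clearing the denominator and using $p^a(z-1)=\ell p^{s+a}$,
$$
S-p^a=\frac{z^{p^a}-1-p^a(z-1)}{z-1}=\frac{z^{p^a}-1-\ell p^{s+a}}{\ell p^s}.
$$
The congruence from Lemma \ref{uno} shows the integer in the numerator is divisible by $p^{2s+a}$, so its valuation is at least $2s+a$; subtracting $v_p(z-1)=s$ gives $v_p(S-p^a)\geq (2s+a)-s=s+a\geq b$. Since $S-p^a$ is an integer, this means $p^b\mid S-p^a$, which is exactly the claim.

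The computation is short; the only real subtlety is that one should \emph{not} attempt a naive induction on $a$ by grouping the sum into $p$ blocks of length $p^a$, since the resulting inner geometric sum corresponds to the congruence $z^{p^a}\equiv 1$ rather than $z^{p^{a-1}}\equiv 1$, so the induction hypothesis fails to apply. Routing everything through Lemma \ref{uno} and the valuation of the numerator $z^{p^a}-1-p^a(z-1)$ sidesteps this entirely, provided one remembers to treat $z=1$ separately so that dividing by $z-1$ is legitimate.
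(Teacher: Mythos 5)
Your proof is correct and follows essentially the same route as the paper's: both arguments reduce to $z\equiv 1\pmod p$, write $z=1+\ell p^s$, apply Lemma \ref{uno} with $c=1$ to conclude $v_p\left(z^{p^a}-1\right)=s+a\geq b$, and finish via the geometric series identity $1+z+\cdots+z^{p^a-1}=(z^{p^a}-1)/(z-1)$. The only difference is in the last step's bookkeeping: the paper divides out $(z^{p^a}-1)/(z-1)=p^{s+a}(h+fp^s)/(hp^s)$ and checks that the unit $h$ divides $f$, whereas you bound $v_p(S-p^a)$ directly from the numerator $z^{p^a}-1-p^a(z-1)$, which sidesteps that divisibility argument and is marginally cleaner.
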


\begin{proof} It follows from (\ref{hipo}) that $z\equiv 1\mod p$. Indeed, the order of $z$ modulo $p$ must be a factor of
both $p^a$ and $p-1$ (which is the order of $(\Z/p\Z)^\times$), so $z$ has order 1 modulo $p$.

If $z=1$ the result is obvious. If not, $z=1+hp^s$, where $h\in\Z$ not divisible by $p$ and $s\geq 1$. Then by Lemma \ref{uno},
$$
z^{p^a}\equiv  (1+hp^s)^{p^a}\equiv 1+hp^{s+a}\mod p^{2s+a},
$$
so for some integer $f$, we have
$$
z^{p^a}=1+hp^{s+a}+fp^{2s+a}=1+p^{s+a}(h+fp^s).
$$
Here $p\nmid (h+fp^s)$, because $s\geq 1$. It follows from (\ref{hipo}) that $b\leq s+a$.
Now
$$
1+z+z^2+\cdots+z^{p^a-1}\equiv (z^{p^a}-1)/(z-1)\equiv p^{s+a}(h+fp^s)/hp^s\mod p^b.
$$
Here  $z-1$ divides $z^{p^a}-1$, so $hp^s$ divides $p^{s+a}(h+fp^s)$. But $hp^s$ also divides $hp^{s+a}$, so
$hp^s$ divides $p^{s+a}fp^s$, that is $h$ divides $p^{s+a}f$. Since $h$ is relatively prime to $p$, we infer that $h$ divides $f$. As
$b\leq s+a$, we conclude that
$p^{s+a}(h+fp^s)/hp^s\equiv p^a+fp^{s+a}/h\equiv p^a\mod p^b$.
\end{proof}

\begin{theorem}\label{main2} Let $p$ be an odd prime. Suppose $i,n\in\N$ satisfy
$n\geq 2$, $1\leq i\leq n-1$, and $2i<n$. Let $d,e\in\Z$ be chosen so that
(\ref{eqrd}) and (\ref{eqrd2}) hold, and let $\S$ be the group with presentation (\ref{sime}).

Let $L,M,N\in\mathrm{Inn}(\S)$ be the inner automorphisms of $\S$ associated with $\A,\B,\C$, respectively.
Let $U,V,W,X,Y,Z\in \mathrm{Aut}(\S)$ be respectively the defined by
$$
\A \mapsto \A,\; \B\mapsto\B\C^{p^{n-i-2}},\; \C\mapsto \A^{-e} \C,
$$
$$
\A \mapsto \A,\; \B\mapsto\B^{1+p^{n-i-1}},\ \C\mapsto \C^{1+p^{n-i-1}},
$$
$$
\A\mapsto\A\B^{p^{n-i-1}},\; \B\mapsto\B,\; \C\mapsto\C,
$$
$$
\A\mapsto\A,\; \B\mapsto\A^{p^{i-1}}\B,\; \C\mapsto\C,
$$
$$
\A\mapsto\A\C^{p^{n-2}},\; \B\mapsto\B,\; \C\mapsto\C,
$$
$$
\A\mapsto\A^{g_0},\; \B\mapsto\B^{h_0},\; \C\mapsto\C,
$$
where $g_0$ is any integer of order $p-1$ modulo $p^{n-i}$, with inverse $h_0$ modulo $p^{n-i}$. Let $G$ be the subgroup of
$\mathrm{Aut}(\S)$ preserving $\T$. Then
\begin{equation}
\label{autolmn}
G=\langle L,M,N,V,W,X,Y,Z\rangle
\end{equation}
and
$$
\mathrm{Aut}(\S)=G\langle U\rangle=\langle U\rangle G.
$$
\end{theorem}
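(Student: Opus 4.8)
The plan is to prove the two displayed identities in turn, after dispatching the routine verifications.

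\smallskip

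First I would check that each of $U,V,W,X,Y,Z$ extends to an automorphism of $\S$, a direct computation with the defining relations (\ref{sime}) in the spirit of the one suppressed at the start of the proof of Theorem \ref{mainresult}. Since $\T=\langle\A,\B\rangle$ is normal in $\S$, the inner automorphisms $L,M,N$ preserve $\T$; and evaluating $V,W,X,Y,Z$ on the generators $\A,\B$ shows their images lie in $\T$, where one repeatedly uses that $\C^{p^{n-i-1}}=\D\in\T$, so that $\C^{p^{n-i-1}}$, $\C^{p^{n-2}}$, and all higher powers of $\C^{p^{n-i-1}}$ lie in $\langle\D\rangle\subseteq\T$. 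Hence $\langle L,M,N,V,W,X,Y,Z\rangle\subseteq G$. By contrast $U(\B)=\B\C^{p^{n-i-2}}\notin\T$, because $\T\cap\langle\C\rangle=\langle\D\rangle=\langle\C^{p^{n-i-1}}\rangle$ contains no power $\C^{p^{n-i-2}}$; thus $U\notin G$. As $G$ is the stabilizer of the subset $\T$, it is a subgroup of $\mathrm{Aut}(\S)$.

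\smallskip

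For $\mathrm{Aut}(\S)=G\langle U\rangle$, I would take an arbitrary $\Psi\in\mathrm{Aut}(\S)$ and record, as in the opening of the proof of Theorem \ref{mainresult}, the normal forms $\Psi(\A)=\A^a\B^b\D^c$, $\Psi(\B)=\A^f\B^j\C^k$ and $\Psi(\C)=\A^m\B^q\C^r$. Here $\Psi(\A)\in\T$ by Proposition \ref{charac} (as $\A^{p^i}=1$ and $\ell=i$), whereas $\Psi(\B)$ is permitted a genuine $\C$-component, since $\B$ has order $p^{n-i}>p^i$ and $\T$ is not characteristic. The decisive point is that $v_p(k)\geq n-i-2$: applying $\Psi$ to $\C\B\C^{-1}=\B^{1+dp}$ and comparing $\C$-exponents modulo $p^{n-i-1}$ — conjugation by any element of $\S$ leaves the $\C$-exponent unchanged, while Lemma \ref{potpq} gives that $\Psi(\B)^{1+dp}$ has $\C$-exponent $k(1+dp)$ modulo $p^{n-i-1}$ — yields $kdp\equiv 0\bmod p^{n-i-1}$, and $p\nmid d$ forces $p^{n-i-2}\mid k$. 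The same relation analysis forces $r\equiv 1\bmod p^{n-i-1}$; unlike the case $2i\geq n$ of Theorem \ref{mainresult}, no $\A\leftrightarrow\B$ transposition is available, because $\A,\B$ have distinct orders $p^i\neq p^{n-i}$. Consequently each power $U^{-l}$ shifts the $\C$-exponent of the image of $\B$ by $-lp^{n-i-2}$ modulo $p^{n-i-1}$, and as $l$ varies these exhaust $p^{n-i-2}\Z/p^{n-i-1}\Z$; choosing $l\equiv k/p^{n-i-2}\bmod p$ makes the $\C$-exponent of $(\Psi U^{-l})(\B)$ divisible by $p^{n-i-1}$, i.e. lie in $\langle\D\rangle\subseteq\T$. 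Then $(\Psi U^{-l})(\A)$ and $(\Psi U^{-l})(\B)$ both lie in $\T$, so $\Psi U^{-l}$ carries $\T=\langle\A,\B\rangle$ into, and by order onto, $\T$; hence $\Psi U^{-l}\in G$ and $\Psi\in G\langle U\rangle$. Having shown $\mathrm{Aut}(\S)=G\langle U\rangle$ as a set, this set is a group, so it coincides with its set of inverses $\langle U\rangle G$, giving $G\langle U\rangle=\langle U\rangle G$.

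\smallskip

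The reverse inclusion $G\subseteq\langle L,M,N,V,W,X,Y,Z\rangle$ is the heart of the matter and I expect it to be the main obstacle. Given $\Omega\in G$ we now have $\Omega(\B)\in\T$, so $k=0$ above and $\Omega$ has the same shape as the automorphisms handled in Theorem \ref{mainresult}; I would reduce $\Omega$ to the identity by successively composing with the listed generators. In outline: $r\equiv 1\bmod p^{n-i-1}$ as above; the inner automorphisms $L,M$ clear the $\A$- and $\B$-components of $\Omega(\C)$, and $V$ together with $N$ normalizes the $\C$-exponent so that $\Omega(\C)=\C$; the identity $[\Omega(\A),\Omega(\B)]=\Omega(\D)=\D$ then forces $aj\equiv 1$; the generator $Y$ together with inner automorphisms clears the $\D$-components of $\Omega(\A)$ and $\Omega(\B)$; finally $Z$ fixes the unit parts of $a,j$ modulo $p-1$ and the transvections $W,X$ clear the residual off-diagonal $p$-parts, leaving the identity. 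Each individual reduction is a direct transcription of a step already carried out in Theorem \ref{mainresult}, but the bookkeeping of the several exponents modulo the various powers of $p$ — especially near the boundary value $n=2i+1$, where $\B^{p^i}$ becomes central in $\S$ — is delicate and is where the real work lies. This gives $\Omega\in\langle L,M,N,V,W,X,Y,Z\rangle$, establishing (\ref{autolmn}), and together with the preceding paragraph it yields $\mathrm{Aut}(\S)=G\langle U\rangle=\langle U\rangle G$.
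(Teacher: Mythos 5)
Your first two paragraphs are sound and essentially reproduce the paper's argument: the inclusion $\langle L,M,N,V,W,X,Y,Z\rangle\subseteq G$, the observation that $U\notin G$ because $\T\cap\langle\C\rangle=\langle\D\rangle$, the bound $v_p(k)\geq n-i-2$ obtained by comparing $\C$-exponents in the abelian quotient $\S/\T$ (this is a cleaner route than the paper's, which extracts the same fact from equation (\ref{luca2}) inside the proof of Proposition \ref{charac}), and the use of a power of $U$ to pull an arbitrary $\Psi$ into $G$. Composing with $U^{-l}$ on the right is a nice touch: since $U$ fixes $\A$, one gets $(\Psi U^{-l})(\A)=\Psi(\A)\in\T$ for free, whereas the paper, working with $U^{l}\Omega$, needs its Claim 1 ($Q\in\langle\B^{p^{n-2i}}\rangle$) at exactly this point.

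The genuine gap is in your third paragraph, and it is a missing idea rather than deferred bookkeeping. Your reductions are justified by appeal to a ``direct transcription'' of Theorem \ref{mainresult}, but the engine of that proof --- the invertible matrix in $\GL_2(\Z/p^{n-i}\Z)$ of the automorphism induced on $\T/Z(\T)$, together with its commutation with the matrix of conjugation by $\C$ --- is unavailable when $2i<n$, for precisely the reason you yourself cite: $\A$ and $\B$ have different orders, so $\T/Z(\T)$ is no longer a free $\Z/p^{n-i}\Z$-module. The paper replaces that tool by its Claims 1 and 2: the containment $Q\in\langle\B^{p^{n-2i}}\rangle$, the fact that $\langle\D\rangle$ is a \emph{characteristic} subgroup of $\S$ (a commutator computation using Lemma \ref{uno}), and an order argument on $[Q,A][P,B]$ showing that the diagonal parts $P$ and $B$ generate $\langle\A\rangle$ and $\langle\B\rangle$. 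Every step of your outline silently relies on consequences of these. Your assertion $r\equiv 1\bmod p^{n-i-1}$ (the paper's Claim 3) needs $B$ to be a generator in order to cancel in $B^{(1+dp)^r}=B^{1+dp}$, and needs Lemma \ref{sumalinda} even to isolate that $\B$-component, since $\Omega(\B)$ carries a genuine $\C$-part; note also that the weaker fact $p\nmid r$, which your second paragraph uses, does not follow from generation alone in the boundary case $(i,n)=(1,3)$, where $n-i-2=0$. Likewise, $L,M$ can clear the $\A$- and $\B$-components of $\Omega(\C)$ only after one knows $C\in\langle\A^p\rangle$ and $D\in\langle\B^p\rangle$, because inner automorphisms shift $\C$ only by $p$-th powers; and $W,X,Y$ can clear the residual components only after one knows $Q\in\langle\B^{p^{n-i-1}}\rangle$, $A\in\langle\A^{p^{i-1}}\rangle$, and $R,S\in\langle\D^{p^{i-1}}\rangle$, which the paper derives from the $\D$-component comparisons (\ref{tig}) and (\ref{tig2}), again resting on Claims 1--2. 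These containment and generation statements \emph{are} the content of (\ref{autolmn}); without them the clearing steps simply cannot be performed, and Theorem \ref{mainresult} does not supply them. A smaller instance of the same underestimate: the extension of $U$ to an automorphism is not routine in the way it is for $V,\dots,Z$; it is the paper's Claim 4 and itself requires Lemma \ref{sumalinda}.
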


\begin{proof} Let $\Omega\in \mathrm{Aut}(\S)$. Since $\A^{p^i}=1$, Proposition \ref{charac} ensures
that $\Omega(\A)\in \T$, so $\Omega(\A)=PQR$ for some $P\in\langle\A\rangle$,
$Q\in\langle\B\rangle$, and $R\in\langle\D\rangle$.
Now the order of $\B$ is $p^{n-i}$, with $n-i>i$, but equation (\ref{luca2})
from Proposition \ref{charac} still gives $\Omega(\B)=t_0\C_0$, where $t_0\in \T$ and
$\C_0\in\langle \C^{p^{n-i-2}}\rangle$. Using the normal form of elements of $\T$, it follows that
$\Omega(\B)=AB\C^{sp^{n-i-2}}$ for some $A\in\langle\A\rangle$,
$B\in\langle\B\rangle$, and $s\in\N$. As $\Omega(\C)\in\S$, we have $\Omega(\C)=CD\C^{r}$
for some $C\in\langle\A\rangle$,
$D\in\langle\B\rangle$, and $r\in\N$. Thus
\begin{equation}
\label{abcd}
\Omega(\B)=AB\C^{sp^{n-i-2}},\;  \Omega(\A)=PQR,\;
\Omega(\C)=CD\C^{r}.
\end{equation}
Since $\Omega(\B),\Omega(\A), \Omega(\C)$ must generate $\S$, we see that $p\nmid r$.

\smallskip

\noindent{\sc Claim 1.} $Q\in \langle \B^{p^{n-2i}}\rangle$.

\smallskip

Since $1=\Omega(\A^{p^i})=(PQR)^{p^i}=P^{p^i}Q^{p^i}R^u$, $u\in\Z$, and the order of $\B$ is $p^{n-i}$, the result follows.

\smallskip

\noindent{\sc Claim 2.} $P$ generates $\langle\A\rangle$, $B$ generates $\langle\B\rangle$ and
$\Omega(\D)$ generates $\langle \D\rangle$.

\smallskip

Set $f=p^{n-i-2}s$. We then have
$$
\Omega(\D)=\Omega([\A,\B])=[\Omega(\A),\Omega(\B)]=[PQR,AB\C^{f}].
$$
Here
$$
[PQR,AB\C^{f}]=[PQR,A]\cdot\,{}^A[PQR,B\C^{f}]=[PQR,A]\cdot\,{}^A[PQR,B]\cdot\, {}^{AB} [PQR,\C^{f}],
$$
where
$$
[PQR,A]\cdot\,{}^A[PQR,B]=[Q,A][P,B],
$$
$$
[\C^{f},PQR]=[\C^{f},P]\cdot\, {}^{P}[\C^{f},QR]=
[\C^{f},P]\cdot\,{}^{P}[\C^{f},Q].
$$
Using Lemma \ref{uno}, $P^{p^i}=1$, and $n-i-1\geq i$, we see that
$$
[\C^{f},P]=P^{(1+ep)^{f}}P^{-1}=P^{1+sep^{n-i-1}}P^{-1}=1.
$$
Moreover, by Claim 1, $Q^{p^i}=1$. Using again Lemma \ref{uno} and $n-i-1\geq i$, we obtain
$$
[\C^{f},Q]=Q^{(1+dp)^{f}}Q^{-1}=Q^{1+sdp^{n-i-1}}Q^{-1}=1.
$$
Therefore,
$$
\Omega(\D)=[PQR,AB\C^{f}]=[Q,A][P,B]\in \langle \D\rangle.
$$
This shows that $\langle \D\rangle$ is a characteristic subgroup of $\S$, whence
$\Omega(\D)$ generates $\langle \D\rangle$. In particular, $[Q,A][P,B]$ has order $p^i$.
But $Q\in\langle\B^{p^{n-2i}}\rangle$ by Claim 1, so
$$
[Q,A]\in \langle\D^{p^{n-2i}}\rangle
$$
has order $<p^i$. Since $[P,B]$ commutes with $[Q,A]$, it follows that $[P,B]$ has order $p^i$. Thus $\langle P\rangle=\langle\A\rangle$ and $\langle B\rangle=\langle\B\rangle$.

\smallskip

\noindent{\sc Claim 3.} $r\equiv 1\mod p^{n-i-1}$.

\smallskip

We have ${}^{\Omega(\C)}\Omega(\B)=\Omega(\B)^{1+dp}$. Setting $f=p^{n-i-2}s$, this translates into
$$
{}^{CD \C^r} (AB\C^f)=(AB\C^f)^{1+dp}.
$$
The $\B$-component of the left hand side is equal to $B^{(1+dp)^r}$.
As for the $\B$-component of the right hand side, we have
$$
(AB\C^f)^p=ABA^{(1+ep)^f}B^{(1+dp)^f}\cdots A^{(1+ep)^{f(p-1)}}B^{(1+dp)^{f(p-1)}}\D^s.
$$
Using Lemma \ref{sumalinda} with $z=(1+dp)^f$, $a=1$ and $b=n-i$ we see that the $\B$-component of $(AB\C^f)^p$ is $B^p$
and that the $\A$-component of $(AB\C^f)^p$ is $A^p$.
Reordering the factors, it follows that
$$
(AB\C^f)^p=A^pB^p\D^u,\quad u\in\Z.
$$
Thus
$$
(AB\C^f)^{dp}=A^{pd} B^{pd}\D^q, \quad q\in\Z.
$$
so
$$
(AB\C^f)^{1+dp}=(AB\C^f)(AB\C^f)^{dp}=AB\C^f A^{pd} B^{pd}\D^q,
$$
and therefore
$$
(AB\C^f)^{1+dp}=AA^{(1+ep)^f pd}BB^{(1+dp)^f pd}\C^f \D^l, \quad l\in\Z.
$$
Now by Lemma \ref{uno},
$$
(1+dp)^f pd\equiv pd\mod p^{n-i},
$$
so the $\B$-component of $(AB\C^f)^{1+dp}$ is $B^{1+dp}$. All in all, we deduce
$$
B^{(1+dp)^r}=B^{1+dp}.
$$
Since $B$ has order $p^{n-i}$ by Claim 2, we infer from $p\nmid d$ and Lemma \ref{uno} that
$$
r\equiv 1\mod p^{n-i-1}.
$$

\medskip

\noindent{\sc Claim 4.} The map defining $U$ extends to an automorphism of $\S$.

\medskip

It is clear that the images of $\C,\A,\B$ generate $\S$. We need to verify that the defining relations of
$\S$ are preserved. We proceed to do this, labeling each step with the corresponding relation of $\S$.

$\bullet$ $\B^{p^{n-i}}=1$. Set $f=p^{n-i-2}$ and $z=(1+dp)^f$. Then
$$
(\B \C^f)^p=\B^u \D,
$$
where
$$
u=1+z+\cdots+z^{p-1}\mod p^{n-i}.
$$
But $z^p\equiv 1\mod p^{n-i}$, so Lemma \ref{sumalinda} implies $u\equiv p\mod p^{n-1}$, whence
\begin{equation}
\label{otraver2}
(\B \C^f)^p=\B^p \D.
\end{equation}
This element has order $p^{n-i-1}$, since the factors commute, $\D$ has order $p^i$, and $n-i-1\geq i$.

$\bullet$ $\A^{p^i}=1$. This step is clear.

$\bullet$ $[\A,\B]=\C^{p^{n-i-1}}$. We need to verify that
$$
[\A,\B\C^{p^{n-i-2}}]=(\A^{-e} \C)^{p^{n-i-1}}.
$$
On the one hand, we have
$$
[\A,\B\C^{p^{n-i-2}}]=[\A,\B]\cdot\, {}^{\B}\![\A,\C^{p^{n-i-2}}]=[\A,\B]=\C^{p^{n-i-1}},
$$
because $\C^{p^{n-i-2}}$ commutes with $\A$, since $(1+ep)^{p^{n-i-2}}\equiv 1+ep^{n-i-1}\mod p^{n-i}$,
$n-i-1\geq i$, and $\A$ has order $p^i$. On the other hand,
\begin{equation}
\label{pin}
(\A^{-e} \C)^{p^{n-i-1}}=\C^{p^{n-i-1}}
\end{equation}
applying Lemma \ref{sumalinda} with $z=1+ep$, $a=n-i-1$, and $b=n-i$, and using $n-i-1\geq i$.

$\bullet$ $\C^{p^{n-1}}=1$. This step follows from (\ref{pin}).

$\bullet$ ${}^\C \A=\A^{1+ep}$. This step is clear because $\A^{-e}$ commutes with $\A$.

$\bullet$ ${}^\C \B=\B^{1+dp}$. Set $f=p^{n-i-2}$. We need to verify that
\begin{equation}
\label{otraver}
{}^{\A^{-e}\C} (\B \C^f)=(\B \C^f)^{1+dp}.
\end{equation}
Using (\ref{otraver2}) and the fact that $\D$ is central in $\S$, we have
$$
(\B \C^f)^{dp}=\B^{dp} \D^{d},
$$
so
$$
(\B \C^f)^{1+dp}=(\B \C^f)(\B \C^f)^{dp}=\B \C^f\B^{dp} \D^{d}.
$$
Here $\C^f$ and $\B^{dp}$ commute, since $dp(1+dp)^{p^{n-i-2}}\equiv dp\mod p^{n-i}$ by Lemma \ref{uno}. Thus
$$
(\B \C^f)^{1+dp}=\B^{1+dp}\C^f \D^{d}.
$$
On the other hand, since $n-i-1\geq i$, $\C^f$ commutes with $\A$, so
$$
{}^{\A^{-e}\C} (\B \C^f)=\,{}^{\A^{-e}}\!(\B^{1+dp} \C^f)=\B^{1+dp}\D^{-e(1+dp)}\C^f.
$$
Now from $(1+dp)(1+ep)\equiv 1\mod p^n$ we infer $e+d+edp\equiv 0\mod p^{n-1}$, so $d\equiv -e(1+dp)\equiv p^i$.
This completes the proof of Claim 4.

\medskip

Recall that $\langle B\rangle=\B$ by Claim 2. Using
(\ref{abcd}), we see that $U^l\Omega$ preserves $\T$ for a suitable $l$.
This proves that $\mathrm{Aut}(\S)=\langle U \rangle G$. Since this is a subgroup, it follows that
$\langle U \rangle G=G\langle U \rangle$.

We proceed to prove (\ref{autolmn}). The verification that the maps defining $V,W,X,Y,Z$ extend
to automorphisms of $\S$ is routine and will be omitted.

Let $\Omega\in G$. Then
\begin{equation}
\label{abce}
\Omega(\B)=ABS,\;  \Omega(\A)=PQR,\;
\Omega(\C)=CD\C^{r}.
\end{equation}
Here  $A,P,C\in\langle\A\rangle$, $B,Q,D\in\langle\B\rangle$, and $R,S\in\langle\D\rangle$.
We know from Claim 3 that $r\equiv 1\mod p^{n-i-1}$. Thus, replacing $\Omega$ by $V^j\Omega$ for a suitable $j$,
we will have $r=1$ in (\ref{abce}).


For any $t\in \T$,  we have
$$
{}^{\Omega(\C)}\Omega(t)=\Omega({}^\C t).
$$
Taking $t=\A$, we get
\begin{equation}
\label{boro}
{}^{(CD\C)}PQR={}^{\Omega(\C)}\Omega(\A)=\Omega({}^\C \A)=\Omega(\A^{1+ep})=\Omega(\A)^{1+ep}=(PQR)^{1+ep}.
\end{equation}
Comparing the $\B$-components in (\ref{boro}), we see that
$$
Q^{1+dp}=Q^{1+ep}.
$$
It follows from (\ref{enod}) that $Q^p=1$ and therefore
\begin{equation}
\label{qato}
Q\in\langle \B^{p^{n-i-1}}\rangle.
\end{equation}
Comparing the $\D$-components in (\ref{boro}), we find that
\begin{equation}
\label{tig}
[C,Q]^{1+dp}[D,P]^{1+ep}R=[Q,P][Q,P]^2\cdots[Q,P]^{ep}R^{1+ep}.
\end{equation}
This implies that
$$
[C,Q][D,P]\in \langle\D^{p}\rangle.
$$
Now $n-i-1\geq 1$ because $2i<n$, so (\ref{qato}) implies
$[C,Q]\in\langle \D^{p}\rangle$, whence $[D,P]\in \langle\D^{p}\rangle$.
Since $P$ generates $\langle\A \rangle$, we infer
\begin{equation}
\label{boto}
D\in \langle\B^p\rangle.
\end{equation}
Likewise, taking $t=\B$, we must have
\begin{equation}
\label{boro2}
{}^{(CD\C)}ABS={}^{\Omega(\C)}\Omega(\B)=\Omega({}^\C \B)=\Omega(\B^{1+dp})=\Omega(\B)^{1+dp}=(ABS)^{1+dp}.
\end{equation}
Comparing the $\A$-components in (\ref{boro2}), we see that
$$
A^{1+ep}=A^{1+dp}.
$$
Since $d\not\equiv e\mod p$, this forces $A^p=1$, whence
\begin{equation}
\label{ato}
A\in\langle \A^{p^{i-1}}\rangle.
\end{equation}
Comparing the $\D$-components in (\ref{boro2}), we see that
\begin{equation}
\label{tig2}
[C,B]^{1+dp}[D,A]^{1+ep}S=[B,A][B,A]^2\cdots[B,A]^{dp}S^{1+dp}.
\end{equation}
It follows that
$$
[C,B][D,A]\in\langle \D^{p}\rangle.
$$
In view of (\ref{boto}), we have $[D,A]\in\langle \D^{p}\rangle$.
Since $B$ generates $\langle\B \rangle$, we infer
\begin{equation}
\label{boton}
C\in \langle\A^p\rangle.
\end{equation}

Making use of (\ref{boto}) and (\ref{boton}) we see that if we replace $\Omega$ by $\delta_s\Omega$, where
$\delta_s\in\mathrm{Inn}(\S)$ for a suitable $s\in \T$, we will have $C=1=D$ in (\ref{abce}).
Since $r\equiv 1\mod p^{n-i-1}$, a further replacement of $\Omega$ by $V^l\Omega$ for a suitable $l$
will maintain $C=1=D$ and ensure $r=1$ in (\ref{abce}). We then have
\begin{equation}
\label{puton}
\D=\Omega(\D)=\Omega([\A,\B])=
[PQR,ABS]=[P,B][Q,A]=[P,B].
\end{equation}
Here $[Q,A]=1$ because (\ref{qato}) and (\ref{ato}) ensure that $[Q,A]\in\langle \C^{p^{(n-i-1)+(n-i-1)+(i-1)}}\rangle$,
and $n-i-1\geq 1$ as just noted above. On the other hand, we have
$$
P=\A^a,\; B=\B^j,
$$
where $a,j$ are relatively prime to $p$ by Claim 2. By virtue of (\ref{puton}) we obtain the sharper statement
\begin{equation}
\label{raton}
aj\equiv 1\mod p^i.
\end{equation}
Going back to (\ref{tig2}) with $C=1=D$ and using $A\in\langle \A^{p^{i-1}}\rangle$, we deduce $S^{dp}=1$
and hence
$$
S\in\langle\D^{p^{i-1}}\rangle.
$$
Since $\A^{p^{i-1}}$ commutes with $\C$, replacing $\Omega$ by
$\delta_s\Omega$, where $\delta_s\in\mathrm{Inn}(\S)$ for a suitable $s\in \langle\A\rangle$, we obtain
$S=1$ in (\ref{abce}).

We next go back to (\ref{tig}) with $C=1=D$. Using $Q\in\langle \B^{p^{n-i-1}}\rangle$, we infer
$R^{ep}=1$ and therefore
\begin{equation}
\label{raton2}
R\in\langle\D^{p^{i-1}}\rangle.
\end{equation}
We next replace $\Omega$ by $\delta_s Z^l\Omega$, $\delta_s\in\mathrm{Inn}(\S)$, for suitable $l\in\Z$ and
$s\in \langle \C\rangle$.
This will keep $C=1=D$ as well as $r=1$; moreover, because of (\ref{raton}), will further achieve $P=\A$ and $B=\B$ in (\ref{abce}).
Taking into account (\ref{qato}), (\ref{ato}), and (\ref{raton2}), we see that $W^aX^bY^c\Omega=1$ for suitable $a,b,c$.
This proves (\ref{autoabc}), and completes the proof of the theorem.
\end{proof}

\begin{theorem}\label{main3} Keep the hypotheses and notation of Theorem \ref{main2}, and take $h_0$ odd. Then
$$
|\mathrm{Aut}(\S)|=p^{2n+2}(p-1), [\mathrm{Aut}(\S):G]=p;
$$
\begin{equation}
\label{gcomp2}
\mathrm{Inn}(\S)=\langle L,M,N\rangle\cong (\Z/p^i\Z\times \Z/p^{n-i-1}\Z)\rtimes \Z/p^{n-i-1}\Z;
\end{equation}
\begin{equation}
\label{gcomp3}
G/\langle L,M,N\rangle\cong \Z/p^i\Z\times [ (\Z/p\Z\times \Z/p\Z\times\Z/p\Z)\rtimes \Z/(p-1)\Z],
\end{equation}
with
\begin{equation}
\label{gcomp}
G=\langle L,M,N\rangle\rtimes \langle V,W,X,Y,Z\rangle\text{ if }i>1;
\end{equation}
$$\Phi=\langle L,M,N,V,W,X,Y\rangle$$ is the kernel
of the natural homomorphism $\Gamma:\mathrm{Aut}(\S)\to \mathrm{Aut}(\S/\S^p)$, and hence a normal subgroup of
$\mathrm{Aut}(\S)$;
$$\Pi=\langle L,M,N,U,V,W,X,Y\rangle=\langle M,N,U,V,W,X,Y\rangle=\Phi\langle U\rangle$$ is a normal Sylow $p$-subgroup of
$\mathrm{Aut}(\S)$;
$$
\mathrm{Aut}(\S)=\Pi\rtimes\langle Z\rangle=\Phi\langle U\rangle\rtimes \langle Z\rangle=\langle M,N,U,V,W,X,Y,Z\rangle.
$$
Moreover, the following relations hold:
$$
L^{p^i}=1, M^{p^{n-i-1}}=1,N^{p^{n-i-1}}=1, {}^N L=L^{1+ep}, {}^N M=M^{1+dp}, LM=ML,
$$
$$
U^p=L, V^{p^i}=1,W^{p}=1, X^{p}=1, Y^{p}=1, Z^{p-1}=1,
$$
$$
VW=WV, VY=YV, VZ=ZV, VX=XV,
$$
$$
{}^Z L=L^{g_0}, {}^Z M=h^{g_0}, ZN=NZ, {}^Y L=L, {}^Y M=M, YN=NY,
$$
$$
{}^X L=L, {}^X M=L^{p^{i-1}} M, XN=NX, {}^W L=L, {}^W M=M, WN=NW,
$$
$$
{}^V L=L, {}^V M=M, {}^V N=N, {}^U L=L, {}^U M=MN^{p^{n-i-2}}, {}^U N=L^{-e}N,
$$
$$
WX=XW, {}^YX=X\text{ if }i>1, {}^YX=LX\text{ if }i=1, WY=YW, {}^Z W=W^{h_0^2}, {}^Z X=X^{g_0^2}, {}^Z Y=Y^{h_0},
$$
as well as the following relations, subject to the indicated conditions:
$$
{}^U Y=V^{ep^{i-1}}Y (i>1), {}^U Y=N^{p^{n-3}}V^{e}Y (i=1, n>3), {}^U W=M^{p^{n-i-2}}W (n>2i+1),
$$
$$
{}^U V=V (n>2i+1),{}^U X=X (n>2i+1), {}^U Z=ZN^{p^{n-i-2}(h_0-1)/2}U^{h_0-1} (n>2i+1).
$$
Furthermore, suppose that $n\neq 2i+1$. Then the above are defining relations for $\mathrm{Aut}(\S)$,
as well as for $\Pi$, once all relations involving $Z$ are removed; $\Pi$ cannot be generated by fewer than 7 elements;
$G$ is not a normal subgroup of $\mathrm{Aut}(\S)$.
\end{theorem}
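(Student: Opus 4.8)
The plan is to upgrade the decomposition $\mathrm{Aut}(\S)=G\langle U\rangle$ of Theorem~\ref{main2} into exact information: first pin down orders, then verify the displayed relations, and finally force them to be defining by matching an upper bound read off from the relations against a lower bound coming from the structure of $\mathrm{Aut}(\S)$. I begin with $|\mathrm{Inn}(\S)|=|\S/Z(\S)|$. By Proposition~\ref{heipres2} we have $|\S|=p^{2n-1}$, and by Lemma~\ref{centro2i} the center is the internal direct product $\langle\D\rangle\times\langle\B^{p^{n-i-1}}\rangle$, of order $p^{i}\cdot p$ by the normal form, so $|\mathrm{Inn}(\S)|=p^{2n-i-2}$. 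Reading off the orders $p^{i},p^{n-i-1},p^{n-i-1}$ of the images of $\A,\B,\C$ in $\S/Z(\S)$ together with the conjugation action of $\C$ gives the isomorphism~\eqref{gcomp2} and the identification $\mathrm{Inn}(\S)=\langle L,M,N\rangle$.

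Next I would verify the displayed relations, a direct computation from the definitions of $U,\dots,Z$ and the presentation~\eqref{sime}; the delicate cases are ${}^UZ$ and the branches at $i=1$. I then let $P$ be the abstract group on $M,N,U,V,W,X,Y,Z$ subject to those relations, with $L:=U^{p}$, and filter it. From its own relations the subgroup $\langle L,M,N\rangle$ has order at most $p^{2n-i-2}$, and the conjugation relations make it normal in $P$. In the quotient, the commutation relations exhibit $\langle\bar V,\bar W,\bar X,\bar Y\rangle$ as abelian of order at most $p^{i+3}$, show that $\bar U$ (with $\bar U^{p}=1$) normalizes it, and that $\bar Z$ (of order at most $p-1$) normalizes the resulting $p$-group; hence $|P|\le p^{2n-i-2}\cdot p^{i+4}(p-1)=p^{2n+2}(p-1)$.

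For the matching lower bound I would invoke $\Gamma$, legitimate because $[\S,\S]\subseteq\S^{p}$ by Propositions~\ref{v} and~\ref{potp}, so $\S/\S^{p}$ is elementary abelian of rank $3$ and $\mathrm{Aut}(\S/\S^{p})\cong\GL_3(\Z/p\Z)$. Evaluating the generators on $\S/\S^{p}$ shows $L,M,N,V,W,Y$ lie in $\ker\Gamma$, while $\Gamma(U)$ and $\Gamma(Z)$ are explicit matrices generating $\mathrm{Im}\Gamma$ of order $p(p-1)$ (when $i>1$); this identifies $\ker\Gamma=\Phi$. Combining $|\mathrm{Inn}(\S)|=p^{2n-i-2}$ with the independence of the images of $V,W,X,Y$ in $\mathrm{Out}(\S)$, of orders $p^{i},p,p,p$ --- proved by evaluating $V^{a}W^{b}X^{c}Y^{d}$ on $\A,\B,\C$ and comparing $\A$-, $\B$- and $\D$-components against a general inner automorphism, as in the disjointness argument of Theorem~\ref{mainresult} --- yields $|\mathrm{Aut}(\S)|\ge p^{2n+2}(p-1)$. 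Equality forces the relations to be defining and delivers \eqref{gcomp2}--\eqref{gcomp}, the index $[\mathrm{Aut}(\S):G]=p$ (since $U^{p}\in G$ but $U\notin G$), and the Sylow picture: $\Pi=\Phi\langle U\rangle$ is the $\Gamma$-preimage of the Sylow $p$-subgroup $\langle\Gamma(U)\rangle$ of $\mathrm{Im}\Gamma$, hence normal, while $\langle Z\rangle$ maps isomorphically onto $\mathrm{Im}\Gamma/\langle\Gamma(U)\rangle\cong\Z/(p-1)\Z$, so $\mathrm{Aut}(\S)=\Pi\rtimes\langle Z\rangle$. For $z(\Pi)\ge 7$ I argue as in Theorem~\ref{mainresult}: the defining relations of $\Pi$ place every commutator and $p$-th power of a generator into $\Pi^{p}$, yielding an epimorphism of $\Pi$ onto a $7$-dimensional $\Z/p\Z$-space. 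Finally, $G$ is not normal because $U$ fails to normalize it: the relation ${}^UZ=ZN^{p^{n-i-2}(h_0-1)/2}U^{h_0-1}$ would put $U^{h_0-1}\in G$ if $UZU^{-1}\in G$, yet $U^{k}\in G$ only when $p\mid k$, and $p\nmid h_0-1$ since $h_0$ has order $p-1>1$ modulo $p$.

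The hardest part will be the completeness of the relation list, i.e.\ showing the upper bound $|P|\le p^{2n+2}(p-1)$ is sharp so that no relation has been omitted; this rests on the independence of the four unipotent generators $V,W,X,Y$ in $\mathrm{Out}(\S)$, which is the technical crux and is exactly where the hypothesis $n\neq 2i+1$ keeps their images from collapsing. The exceptional value $n=2i+1$ (for which the displayed list suppresses some relations) and the case $i=1$ (where $X$ acts nontrivially on $\S/\S^{p}$, so both the generating set of $\ker\Gamma$ and the Frattini-quotient count need to be re-examined) each demand separate, careful treatment.
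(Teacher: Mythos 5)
Your plan coincides with the paper's proof in its essentials: the same routine verification of the relations; the same two-step filtration of the abstract group $P$ (normality of $\langle L,M,N\rangle$ of order at most $p^{2n-i-2}$, then the abelian block generated by the images of $V,W,X,Y$, normalized by $\bar U$ and $\bar Z$) giving the upper bound $p^{2n+2}(p-1)$; the same injectivity (``disjointness'') argument pitting $V^aW^bX^cY^dZ^f$ against inner automorphisms, which is exactly how the paper proves (\ref{gcomp3}); and the same arguments for the $7$-generator bound and for the non-normality of $G$ via the relation for ${}^UZ$. The one genuine divergence is where the exact order comes from. The paper never routes the count through $\Gamma$: it gets $|G|=p^{2n+1}(p-1)$ directly from (\ref{gcomp2}) and (\ref{gcomp3}), then $[\mathrm{Aut}(\S):G]=[\langle U\rangle:G\cap\langle U\rangle]=p$ from $U\notin G$, $U^p=L\in G$, and $\mathrm{Aut}(\S)=G\langle U\rangle$; and it gets normality of $\Pi$ from Sylow counting ($n_p\mid p-1$ and $n_p\equiv 1\bmod p$ force $n_p=1$). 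That route is uniform in $i$ and $n$, so it covers $i=1$ and $n=2i+1$, for which the order, structure and Sylow claims are still asserted. Your lower bound and your Sylow argument instead pass through $\Gamma$ and $\GL_3(\Z/p\Z)$, which is precisely why you are forced to set $i=1$ aside; switching to the paper's $G$-based count would remove both of your caveats for everything except the defining-relations statement, which the theorem restricts to $n\neq 2i+1$ anyway.

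Your hesitation at $i=1$ is, however, well founded in a stronger sense: it exposes a defect in the statement itself that the paper's proof glosses over. For $i=1$ one has $X(\B)=\A\B$ with $\A\notin\S^p$ (Proposition \ref{potp} and the normal form), so $\Gamma(X)\neq 1$ and $\Phi=\langle L,M,N,V,W,X,Y\rangle\not\subseteq\ker\Gamma$; the claim $\Phi=\ker\Gamma$, which the paper's proof supports only by saying $\Phi$ is ``clearly included'' in the kernel, is false in that case (indeed $\Gamma(U),\Gamma(X),\Gamma(Z)$ then generate a subgroup of order at least $p^2(p-1)$, so $|\ker\Gamma|\le p^{2n}<p^{2n+1}=|\Phi|$). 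So on this point your plan is more careful than the paper; a complete write-up should either restrict that clause to $i>1$ or adjust the generating set of $\ker\Gamma$ when $i=1$, while taking the order and Sylow statements from the $G$-based count as above.
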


\begin{proof} The verification of the stated relations is a routine calculation that will be omitted.
We do mention, in connection with the relations involving conjugation by $U$, that $U$ satisfies
$$
\A\mapsto\A,\; \B \A^{-ep^{n-i-2}}\C^{-p^{n-i-2}}\mapsto\B,\; \A^e \C\mapsto\C,
$$
so $U^{-1}$ is given by
$$
\A\mapsto\A,\; \B\mapsto\B \A^{-ep^{n-i-2}}\C^{-p^{n-i-2}},\; \C\mapsto\A^e \C.
$$
It should be noted that if $n>2i+1$ then $U^{-1}$ satisfies $\B\mapsto\B\C^{-p^{n-i-2}}$.

We claim that $G$ has index $p$ in $\mathrm{Aut}(\S)$. Indeed, the definition of $U$ implies $U\notin G$.
Thus, our relations $U^p=L$ and $L^{p^i}=1$ imply that $\langle U\rangle$ is $p$-group satisfying $U\notin G$
but $U^p\in G$, which yields $G\cap\langle U\rangle=\langle U^p\rangle$. On the other hand, we have
$\mathrm{Aut}(\S)=G\langle U\rangle$ by Theorem~\ref{main2}, and therefore
$[\mathrm{Aut}(\S):G]=[\langle U \rangle:G\cap\langle U\rangle]=[\langle U \rangle:\langle U^p\rangle]=p$. This
proves the claim. Thus $|\mathrm{Aut}(\S)|=p^{2n+2}(p-1)$ is equivalent to $|G|=p^{2n+1}(p-1)$, which follows
immediately from (\ref{gcomp2}) and (\ref{gcomp3}). Now (\ref{gcomp2}) is a consequence
of Lemma \ref{centro2i}, and we proceed to prove (\ref{gcomp3}). Our relations yield an epimorphism
$$
\Z/p^i\Z\times [ (\Z/p\Z\times \Z/p\Z\times\Z/p\Z)\rtimes \Z/(p-1)\Z]\to G/\langle L,M,N\rangle\cong
\langle \overline{V},\overline{W},\overline{X},\overline{Y},\overline{Z}\rangle,
$$
and we need to see that this is injective. This is equivalent to the following: if $v\in\langle V\rangle$,
$w\in\langle W\rangle$, $x\in\langle X\rangle$, $y\in\langle Y\rangle$, $z\in\langle Z\rangle$, and
$u=vwxyz\in\mathrm{Inn}(\S)$ then $v=w=x=y=z=1$. Suppose then that such $u$ is an inner automorphism of $\S$,
associated to an element $t$ of $\S$. Successively applying $u$ to $\C$, $\B$, and $\A$, we see that
$v=1$, $x=1=z$, and $w=1$. Thus $u=y=Y^l$ for some $l$. Hence $t=\A^i\B^j\C^k$ must commute with $\B$
and $\C$. As $t$ commutes with $\C$, we infer $v_p(j)\geq n-i-1$. But $\B^{p^{n-i-1}}\in Z(\S)$ by Lemma \ref{centro2i},
and we may assume that $t=\A^i\C^k$. Since $t$ commutes with~$\B$, we deduce $v_p(k)\geq n-i-1$.
But $\C^{p^{n-i-1}}=\D\in Z(\S)$,
so we may assume that $t=\A^i$. Thus $u(\A)=\A$ and $u(\A)=\A\C^{l p^{n-2}}$, whence $v_p(l)\geq 1$
and therefore $y=1$, as required. This proves (\ref{gcomp3}). Note that if $i>1$ our relations show that
$$
\langle V,W,X,Y,Z\rangle=\langle V\rangle\times[(\langle W\rangle\times \langle X\rangle\times \langle Y\rangle)\rtimes
\langle Z\rangle],
$$
and the above argument then proves that $\langle V,W,X,Y,Z\rangle\cap \langle L,M,N\rangle$ is trivial, whence (\ref{gcomp}) holds.

By (\ref{gcomp2}) and (\ref{gcomp3}), $\Phi$ has order $p^{2n+1}$, so it is normal in a Sylow $p$-subgroup of
$\mathrm{Aut}(\S)$ (having index $p$ there). But $\Phi$ is also normalized by $Z$, so $\Phi$ is normal in $\mathrm{Aut}(\S)$.
It is in fact the kernel of $\Gamma$, being clearly included there, and using the fact that the index
$[\mathrm{Aut}(\S):\ker\Gamma]$ is divisible by $p$ and $p-1$ (look at $U$ and $Z$). On the other hand,
Sylow's theorem implies that $\Pi$ is normal in $\mathrm{Aut}(\S)$ (the only positive integer that is a factor of $p-1$ and is
congruent to 1 modulo $p$ is 1).

Suppose next that $n\neq 2i+1$. An abstract group generated by elements $L,M,N,U,V,W$,
$X,Y,Z$ satisfying
the stated relations is easily seen to have order $\leq p^{2n+2}(p-1)$. This implies that the stated relations are
defining relations for $\mathrm{Aut}(\S)$. A similar argument applies to $\Pi$.

The given relations allow us to define an epimorphism $\Pi\to B$, where $B$
is a 7-dimensional vector space over $\Z/p\Z$, so $\Pi$ cannot be generated by fewer than 7 elements.

The last stated relation implies $UZU^{-1}\notin G$, as $Z,N\in G$ but $U\notin G$ and
$h_0\not\equiv 1\mod p$.
\end{proof}

\section{Appendix: The automorphism group of $\U$}\label{ahil}

\begin{lemma}\label{cuno} If $s\in\N$, $r\in\Z$, where $r\neq 0$ and $v_p(r)\geq 1$, then
$$
v_p\left(\left( 1+r\right)^{s}-1\right)=v_p(r)+v_p(s).
$$
\end{lemma}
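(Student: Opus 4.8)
The plan is to read this off directly from Lemma \ref{uno}, which is precisely tailored to this computation. First I would record the $p$-adic data of the hypotheses: set $a=v_p(r)$, so that $a\geq 1$ and $r=\ell p^{a}$ with $p\nmid \ell$, and set $b=v_p(s)$, so that $s=c p^{b}$ with $p\nmid c$. The goal is then to prove $v_p\bigl((1+r)^{s}-1\bigr)=a+b$.

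Next I would feed these parameters into Lemma \ref{uno}, which yields
$$
(1+r)^{s}=(1+\ell p^{a})^{c p^{b}}\equiv 1+c\ell\,p^{a+b}\mod p^{2a+b},
$$
and hence
$$
(1+r)^{s}-1\equiv c\ell\,p^{a+b}\mod p^{2a+b}.
$$
The remaining work is purely a comparison of valuations. Since $a\geq 1$, the modulus exponent satisfies $2a+b>a+b$, so the error term absorbed into the congruence has $p$-valuation strictly larger than $a+b$. The displayed leading term $c\ell\,p^{a+b}$, on the other hand, has valuation exactly $a+b$ because $c$ and $\ell$ are both coprime to $p$. A term of valuation exactly $a+b$ plus a term of valuation strictly larger than $a+b$ has valuation exactly $a+b$, so $v_p\bigl((1+r)^{s}-1\bigr)=a+b=v_p(r)+v_p(s)$, as claimed.

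I do not expect a genuine obstacle here; the one point deserving care is the role of the hypothesis $v_p(r)\geq 1$. It is exactly this assumption that forces the strict inequality $2a+b>a+b$ and thereby guarantees that the leading term $c\ell\,p^{a+b}$ is not cancelled or overtaken by the error term. Were $a=0$ allowed, the error term could match the main term and the valuation identity would fail, so I would make sure to flag that the odd-prime and $v_p(r)\geq 1$ hypotheses already baked into Lemma \ref{uno} are precisely what make the argument go through.
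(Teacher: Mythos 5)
Your proof is correct and follows essentially the same route as the paper: both decompose $r=\ell p^a$, $s=cp^b$ and apply Lemma \ref{uno} with these parameters, then conclude from $p\nmid c\ell$ that the valuation is exactly $a+b$ (the paper factors $(1+r)^s-1=p^{a+b}(c\ell+kp^a)$, which is just your valuation comparison written multiplicatively). Nothing further is needed.
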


\begin{proof} We have $r=\ell p^a$ and $s=cp^b$, where $a,b,c,\ell\in\Z$, $a,c\geq 1$, $b\geq 0$, $p\nmid c$ and $p\nmid \ell$.
According to Lemma \ref{uno}, there is some $k\in\Z$ such that
$$
(1+r)^s-1=c\ell p^{a+b}+kp^{2a+b}=p^{a+b}(c\ell+kp^{a}).
$$
Since $c\ell$ is relatively prime to $p$, we infer
$$
v_p((1+r)^s-1)=a+b=v_p(r)+v_p(s).
$$
\end{proof}

\begin{lemma}
\label{dos} Let $a,b\geq 1$. Then the order of the element $z=x^ay^b$ of $\U$ is $p^s$, where
$$
s=\max\{n-v_p(a), n-i-v_p(b), 0\}.
$$
In particular, the order of $z$ is $p^n$ if and only if $p\nmid a$.
\end{lemma}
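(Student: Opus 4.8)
The plan is to exploit the semidirect product structure of $\U$, in which every element has a unique normal form $x^c y^m$ with $0\le c<p^n$, $0\le m<p^{n-i}$, the generators multiplying via $y^m x^c y^{-m}=x^{c(1+p^i)^m}$. First I would record a closed form for the powers of $z=x^ay^b$. Writing $w=(1+p^i)^b$ and applying the conjugation rule, a one-line induction yields
$$
z^k=x^{\,a(1+w+w^2+\cdots+w^{k-1})}\,y^{kb},\qquad k\ge 1.
$$
Because $\U$ is a finite $p$-group, the order of $z$ is a power of $p$, so it suffices to find the least $s\ge 0$ with $z^{p^s}=1$. By uniqueness of the normal form and the facts that $x$ has order $p^n$ and $y$ has order $p^{n-i}$, the equation $z^{p^s}=1$ is equivalent to the pair of divisibility conditions $p^{n-i}\mid p^s b$ and $p^n\mid a\,(1+w+\cdots+w^{p^s-1})$.

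The crux is the $p$-valuation of the geometric sum $1+w+\cdots+w^{p^s-1}=(w^{p^s}-1)/(w-1)$, and here Lemma \ref{cuno} does all the work. Applied with $r=p^i$ and exponent $b$ it gives $v_p(w-1)=i+v_p(b)$; applied again with $r=w-1$ and exponent $p^s$ (legitimate, since $v_p(w-1)=i+v_p(b)\ge1$) it gives $v_p(w^{p^s}-1)=i+v_p(b)+s$. Subtracting, the geometric sum has valuation exactly $s$, and this remains correct at $s=0$, where the sum is $1$. Consequently the two divisibility conditions become the inequalities $s\ge n-i-v_p(b)$ and $s\ge n-v_p(a)$.

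Both inequalities are monotone in $s$, so the least $s\ge 0$ meeting them simultaneously is $\max\{\,n-v_p(a),\,n-i-v_p(b),\,0\,\}$, which is the claimed exponent. For the final clause, since $i\ge 1$ one always has $n-i-v_p(b)\le n-i<n$, so the maximum equals $n$ if and only if $n-v_p(a)=n$, that is $v_p(a)=0$, which is precisely $p\nmid a$.

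The step I expect to be the main obstacle is the valuation of the geometric sum: the temptation is to expand $1+w+\cdots+w^{p^s-1}$ term by term, which is messy, whereas routing through the factorization $(w^{p^s}-1)/(w-1)$ lets Lemma \ref{cuno} apply cleanly to numerator and denominator and collapses the computation to a single subtraction. The remaining work is routine bookkeeping with the normal form.
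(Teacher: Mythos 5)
Your proof is correct and follows essentially the same route as the paper: both compute $z^k$ in the normal form $x^{a(w^k-1)/(w-1)}y^{kb}$ and apply Lemma \ref{cuno} twice, to $w^{p^s}-1$ and to $w-1$, to see that the geometric sum has $p$-valuation exactly $s$. The only cosmetic difference is that you restrict to $p$-power exponents from the outset (since $\U$ is a $p$-group), whereas the paper finds the least exponent $m$ killing each component; the content is identical.
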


\begin{proof} Given $m\geq 1$, we have $z^m=x^k y^\ell$, where
$$
k=a\frac{(1+p^i)^{bm}-1}{(1+p^i)^b-1}\text{ and }\ell=bm.
$$
By Lemma \ref{cuno}, we have
$$
v_p\left(\frac{(1+p^i)^{bm}-1}{(1+p^i)^b-1}\right)=v_p(m),
$$
so the smallest $m$ such that $x^k=1$ is $m=1$ if $v_p(a)\geq n$ and $m=p^{n-v_p(a)}$ otherwise. On the other hand, the smallest $m$ such that
$y^\ell=1$ is $m=1$ if $v_p(b)\geq n-i$ and $m=p^{n-i-v_p(b)}$  otherwise.
\end{proof}

\begin{lemma}\label{tres} Let $a,b\geq 1$, where $p\nmid a$, and set $z=x^ay^b\in \U$. Then $\langle z\rangle$ is a normal subgroup of $\U$ if and only if $x^{p^i}\in \langle z\rangle$.
Equivalently, $\langle z\rangle$ is normal if and only if $v_p(b)\geq n-2i$.
\end{lemma}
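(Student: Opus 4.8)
The plan is to reduce normality of $\langle z\rangle$ to the single membership condition $x^{p^i}\in\langle z\rangle$, and then to translate that condition into the numerical criterion $v_p(b)\ge n-2i$ by intersecting $\langle z\rangle$ with $\langle x\rangle$.

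First I would record two conjugation identities. Writing $z=x^ay^b$ and using only the defining relation $yxy^{-1}=x^{1+p^i}$ (together with its consequence $y^bxy^{-b}=x^{(1+p^i)^b}$), a direct computation yields
$$(yzy^{-1})z^{-1}=x^{ap^i},\qquad (xzx^{-1})z^{-1}=x^{1-(1+p^i)^b}.$$
Since $x,y$ generate $\U$, normality of $\langle z\rangle$ is equivalent to $yzy^{-1}\in\langle z\rangle$ and $xzx^{-1}\in\langle z\rangle$ simultaneously, that is, to $x^{ap^i}\in\langle z\rangle$ and $x^{1-(1+p^i)^b}\in\langle z\rangle$.

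Next I would establish that normality is equivalent to $x^{p^i}\in\langle z\rangle$. For the forward direction, $yzy^{-1}\in\langle z\rangle$ forces $x^{ap^i}\in\langle z\rangle$; because $p\nmid a$ we have $\langle x^{ap^i}\rangle=\langle x^{p^i}\rangle$, so $x^{p^i}\in\langle z\rangle$. For the converse, if $x^{p^i}\in\langle z\rangle$ then $\langle x^{p^i}\rangle\subseteq\langle z\rangle$; now $x^{ap^i}\in\langle x^{p^i}\rangle$, and by Lemma \ref{cuno} we have $v_p(1-(1+p^i)^b)=i+v_p(b)\ge i$, so $x^{1-(1+p^i)^b}\in\langle x^{p^i}\rangle$ as well. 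Hence both conjugation identities return elements of $\langle z\rangle$, and $\langle z\rangle$ is normal.

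Finally I would convert the membership condition into the valuation criterion. Since $p\nmid a$, Lemma \ref{dos} gives that $z$ has order $p^n$, so $|\langle z\rangle|=p^n$. In the generic case $v_p(b)<n-i$, projecting modulo $\langle x\rangle$ sends $z\mapsto y^b$, an element of order $p^{n-i-v_p(b)}$ in $\U/\langle x\rangle\cong C_{p^{n-i}}$; comparing orders shows that $\langle z\rangle\cap\langle x\rangle$ has order $p^{i+v_p(b)}$, hence equals the unique subgroup $\langle x^{p^{n-i-v_p(b)}}\rangle$ of the cyclic group $\langle x\rangle$. Then $x^{p^i}\in\langle z\rangle$ iff $x^{p^i}\in\langle x^{p^{n-i-v_p(b)}}\rangle$, which holds iff $n-i-v_p(b)\le i$, i.e. iff $v_p(b)\ge n-2i$. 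I expect the step demanding the most care to be the bookkeeping in the two conjugation identities (tracking the twisted action and the exponent arithmetic), together with the degenerate case $v_p(b)\ge n-i$, where $y^b=1$ and $\langle z\rangle=\langle x\rangle$ is automatically normal; there one checks the criterion remains consistent, since $v_p(b)\ge n-i>n-2i$ as $i\ge1$.
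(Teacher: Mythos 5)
Your proof is correct, but both halves run along routes genuinely different from the paper's. For the equivalence with $x^{p^i}\in\langle z\rangle$, the paper observes that since $p\nmid a$ the pair $y,z$ already generates $\U$, so normality of $\langle z\rangle$ reduces to the single condition $yzy^{-1}\in\langle z\rangle$, and the identity $yzy^{-1}=x^{ap^i}z$ finishes it in one line; you instead check closure under conjugation by both generators $x$ and $y$, which costs the extra identity $xzx^{-1}=x^{1-(1+p^i)^b}z$ and an application of Lemma \ref{cuno} to see that $x^{1-(1+p^i)^b}\in\langle x^{p^i}\rangle$ in the converse direction --- equally valid, just slightly longer. For the translation into the valuation criterion, the paper writes $x^{p^i}=z^m$ and reuses the explicit formula $z^m=x^ky^{\ell}$ (with $k=a\frac{(1+p^i)^{bm}-1}{(1+p^i)^b-1}$, $\ell=bm$) from inside the proof of Lemma \ref{dos}, reading off $v_p(m)=i$ via Lemma \ref{cuno} and then $v_p(b)\geq n-2i$ from $y^{\ell}=1$, with the converse obtained by taking $m=p^i$; you instead identify $\langle z\rangle\cap\langle x\rangle$ as the unique subgroup of order $p^{i+v_p(b)}$ of the cyclic group $\langle x\rangle$, by comparing $|\langle z\rangle|=p^n$ (the statement of Lemma \ref{dos}) with the order of the image of $z$ in $\U/\langle x\rangle$. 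Your intersection argument is more structural and only invokes Lemma \ref{dos} as a black box rather than its internal computation, at the price of the separate degenerate case $v_p(b)\geq n-i$, which you handle correctly; the paper's computation is uniform and marginally shorter.
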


\begin{proof} Since $y$ and $z$ generate $\U$, it follows that $\langle z\rangle$ is normal if and only if
$yzy^{-1}\in \langle z\rangle$, which translates as $x^{a(1+p^i)}y^b\in \langle z\rangle$ or, alternatively,
$x^{ap^i}z\in \langle z\rangle$, that is, $x^{ap^i}\in \langle z\rangle$, where $x^{p^i}$ is a power of $x^{ap^i}$.

Suppose next $\langle z\rangle$ is normal. Then, by above, $x^{p^i}=z^m$ for some $m\geq 1$. Now $z^{m}=x^k y^\ell$,
where $k,\ell$ are as in the proof of Lemma \ref{dos}. Since $v_p(p^i)=i$, it follows that $v_p(m)=i$. As $y^\ell=1$,
we infer that $v_p(b)\geq n-2i$.

Suppose, conversely, that $v_p(b)\geq n-2i$. Set $m=p^i$. Then $z^m=x^k y^\ell$,
where $k,\ell$ are as in the proof of Lemma \ref{dos}, so that $v_p(k)=i$. Since $y^{p^{n-i}}=1$ and $v_p(b)\geq n-2i$, we infer $y^\ell=1$.
All in all, $z^m=x^{cp^i}$, where $p\nmid c$. Since $x^{p^i}\in\langle x^{cp^i}\rangle$, we deduce
$x^{p^i}\in \langle z\rangle$.
\end{proof}

\begin{lemma}\label{cinco} The assignment
\begin{equation}\label{aluno}
y\mapsto y,\; x\mapsto\begin{cases} xy^{p^{n-2i}} & \text{ if } 2i\leq n,\\ xy & \text{ if } 2i\geq n,
\end{cases}
\end{equation}
extends to an automorphism, say $\alpha$, of $\U$ having order $p^i$ if $2i\leq n$ and $p^{n-i}$ if $2i\geq n$.
\end{lemma}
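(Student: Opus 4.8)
The plan is to treat the two cases $2i\le n$ and $2i\ge n$ uniformly by writing $\alpha(x)=xy^{t}$ and $\alpha(y)=y$, where $t=p^{n-2i}$ when $2i\le n$ and $t=1$ when $2i\ge n$. The workhorse for everything is a single collection formula. From the defining relation $yxy^{-1}=x^{1+p^i}$ one gets $y^{k}xy^{-k}=x^{(1+p^i)^{k}}$ by induction, hence $y^{t}xy^{-t}=x^{\mu}$ with $\mu=(1+p^i)^{t}$. Moving all the $y^{t}$ factors to the right then yields, again by an easy induction, $(xy^{t})^{m}=x^{1+\mu+\cdots+\mu^{m-1}}\,y^{mt}$ for every $m\ge 1$. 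I would record this identity first and use it throughout.

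To show $\alpha$ extends to an endomorphism I would check the three defining relations of $\U$ in (\ref{preshi}) on the images. The relation $y^{p^{n-i}}=1$ is immediate since $\alpha(y)=y$. For $x^{p^n}=1$ and for the twisting relation $yxy^{-1}=x^{1+p^i}$ I apply the collection formula with $m=p^{n}$ and $m=1+p^{i}$ respectively, then reduce the resulting $x$- and $y$-exponents modulo $p^{n}$ and $p^{n-i}$. Here Lemma~\ref{uno} is essential: it gives $\mu\equiv 1+p^{n-i}\pmod{p^{n}}$ when $2i\le n$ (and $\mu=1+p^{i}$ outright when $t=1$), so that $\mu^{k}\equiv 1+kp^{c}\pmod{p^{n}}$, where $c=n-i$ if $2i\le n$ and $c=i$ if $2i\ge n$, the quadratic binomial term dying precisely because $2c\ge n$ in each case.

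The verification of the twisting relation is the main obstacle, because $\alpha(x)$ is not a power of $x$ alone, so raising it to the $(1+p^i)$-th power mixes $x$ and $y$. Summing the congruences above, the $x$-exponent of $\alpha(x)^{1+p^i}$ becomes $(1+p^i)+p^{c}\tfrac{p^{i}(p^{i}+1)}{2}$; since $p$ is odd, $\tfrac{p^{i}+1}{2}$ is an integer, so the correction term equals $p^{c+i}\tfrac{p^{i}+1}{2}$ and vanishes mod $p^{n}$ because $c+i\ge n$, leaving exactly $1+p^i$. Simultaneously the $y$-exponent $(1+p^i)t$ reduces back to $t$ via $y^{p^{n-i}}=1$ (using $n-i\le i$ when $2i\ge n$, and $(1+p^i)t\equiv t$ when $2i\le n$). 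A shorter parallel computation, with the triangular-number sum again killed by the oddness of $p$, settles $x^{p^n}=1$. This is the one place where both the case hypothesis and the parity of $p$ are genuinely used.

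Once $\alpha$ is an endomorphism, surjectivity is immediate: $y=\alpha(y)$ and $x=\alpha(x)\,\alpha(y)^{-t}=\alpha(xy^{-t})$ both lie in the image, so $\alpha$ is onto and hence, $\U$ being finite, an automorphism. Finally, to compute the order I would prove by induction that $\alpha^{m}(x)=xy^{mt}$ (the step $\alpha^{m+1}(x)=\alpha(xy^{mt})=\alpha(x)\alpha(y)^{mt}=xy^{(m+1)t}$ uses only $\alpha(y)=y$). Since $\alpha(y)=y$ always, $\alpha^{m}=\mathrm{id}$ holds iff $y^{mt}=1$ iff $p^{n-i}\mid mt$. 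With $t=p^{n-2i}$ this gives least exponent $m=p^{i}$ when $2i\le n$, and with $t=1$ it gives $m=p^{n-i}$ when $2i\ge n$, which is exactly the claimed order of $\alpha$.
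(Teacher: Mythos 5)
Your proposal is correct and takes essentially the same route as the paper: in both, the heart of the argument is verifying the twisting relation $y(xy^{b})y^{-1}=(xy^{b})^{1+p^i}$ by collecting the $x$-exponent as a geometric sum, reducing it with Lemma~\ref{uno}, and killing the triangular-number correction term using the oddness of $p$ together with the inequality $c+i\geq n$ (the paper's $i+j\geq n$). The only difference is one of bookkeeping: the paper cites Lemma~\ref{dos} to see that $xy^{b}$ has order $p^{n}$, so the relation $x^{p^{n}}=1$ needs no separate check, whereas you verify it by a second application of your collection formula and also write out explicitly the surjectivity and order computations that the paper leaves implicit.
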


\begin{proof} Set $m=1+p^i$  as well as $b=p^{n-2i}$ if $2i\leq n$, and $b=1$ if $2i\geq n$. Taking into account Lemma \ref{dos},
it suffices to verify that
$$
y(xy^{b})y^{-1}=(xy^b)^m.
$$
On the one hand, we have $y(xy^b)y^{-1}=x^m y^b$, and on the other  $(xy^b)^m=x^k y^\ell$, where
$$
k=1+m^b+m^{2b}+\cdots+m^{p^i b}\text{ and }\ell=bm.
$$
Since $y^{p^{n-i}}=1$, it follows that $y^\ell=y^b$. Set $j=n-i$ if $2i\leq n$ and $j=i$ if $2i\geq n$.
Then Lemma~\ref{uno} yields
$$
k\equiv 1+p^i+p^{j}(1+2+\cdots+p^i)\equiv 1+p^i+p^{j}p^i(p^i+1)/2\mod p^n.
$$
Since $i+j\geq n$ and $p$ is odd, it follows that $k\equiv 1+p^i\mod p^n$, as required.
\end{proof}

Since $\Aut(C_{p^n})$ is abelian, it is clear that $\U$ is a normal subgroup of
the holomorph of $C_{p^n}$, namely the semidirect product
$\H(C_{p^n})=C_{p^n}\rtimes\mathrm{Aut}(C_{p^n})$ of $C_{p^n}$ by its full automorphism group. Thus,
for any integer $r$ relatively prime to $p$ there is an automorphism, say
$\Omega_r$, of $\U$ defined by
\begin{equation}
\label{omegar}
x\mapsto x^r,\; y\mapsto  y,
\end{equation}
namely the restriction to $\U$ of a suitable inner automorphism of $C_{p^n}\rtimes\mathrm{Aut}(C_{p^n})$.

We proceed to select integers $g,h,t,d$ and $e$ for further use within this section.
It is well known that the group of units $(\Z/p^n\Z)^\times$ is cyclic and we fix throughout
\begin{equation}
\label{orderg}
\text{an integer }g\text{ that has order }p^{n-1}(p-1)\text{ modulo }p^n.
\end{equation}
We accordingly set
\begin{equation}\label{aldos}
\beta=\Omega_g,
\end{equation}
and let $h$ be an odd positive integer satisfying
\begin{equation}
\label{eqh}
gh\equiv 1\mod p^n.
\end{equation}
By Lemma \ref{uno}, $1+p^i$ has order $p^{n-i}$ modulo $p^n$. Since $g^{p^{i-1}(p-1)}$ also has order $p^{n-i}$ modulo $p^n$,
there is an integer $t$, relatively prime to $p$, such that
\begin{equation}
\label{eqt}
g^{p^{i-1}(p-1)t}\equiv 1+p^i\mod p^n.
\end{equation}
Thus
\begin{equation}
\label{releq3}
\beta^{p^{i-1}(p-1)t}=\delta_y.
\end{equation}
Now $g^{(p-1)t}$ and $h^{(p-1)t}$ have order $p^{n-1}$ modulo $p^n$, so there exist integers $d$ and $e$ satisfying
\begin{equation}\label{nixp}
g^{(p-1)t}=1+dp, h^{(p-1)t}=1+ ep
\end{equation}
as well as (\ref{eqrd}) and (\ref{eqrd2}).

\begin{prop}\label{gen} We have
$$
\Aut(\U)=\mathrm{Inn}(\U)\langle \alpha, \beta\rangle.
$$
\end{prop}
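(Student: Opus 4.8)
The plan is to take an arbitrary $\Omega\in\Aut(\U)$ and reduce it to the identity by composing on the left with inner automorphisms and with powers of $\alpha$ and $\beta$; since $\mathrm{Inn}(\U)$ is normal in $\Aut(\U)$, the product $\mathrm{Inn}(\U)\langle\alpha,\beta\rangle$ is a subgroup, so exhibiting a word $W\in\langle\mathrm{Inn}(\U),\alpha,\beta\rangle$ with $W\Omega=1$ will give $\Omega=W^{-1}\in\mathrm{Inn}(\U)\langle\alpha,\beta\rangle$. First I would write $\Omega(x)=x^ay^b$ and $\Omega(y)=x^cy^e$ in the normal form afforded by $\U=\langle x\rangle\rtimes\langle y\rangle$. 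Since $\Omega(x)$ has order $p^n$, Lemma \ref{dos} forces $p\nmid a$, so $a$ is a unit modulo $p^n$. Because $\beta=\Omega_g$ multiplies the $x$-exponent by $g$ while fixing $y$, and $g$ generates $(\Z/p^n\Z)^\times$, replacing $\Omega$ by $\beta^s\Omega$ for a suitable $s$ achieves $a\equiv 1\bmod p^n$, i.e. $\Omega(x)=xy^b$.

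Next I would pin down $b$. The commutator subgroup $[\U,\U]=\langle x^{p^i}\rangle$ is characteristic, and $\U/[\U,\U]\cong\Z/p^i\Z\times\Z/p^{n-i}\Z$ with $\overline{x},\overline{y}$ of orders $p^i$ and $p^{n-i}$. The induced automorphism of the abelianization preserves element orders, so $\overline{x}\,\overline{y}^{\,b}$ must have order $p^i$, which forces $p^{\max(n-2i,0)}\mid b$. As $\alpha$ fixes $y$ and sends $x\mapsto xy^c$ with $c=p^{\max(n-2i,0)}$, an easy induction gives $\alpha^m(xy^b)=xy^{b+mc}$ without disturbing the $x$-exponent, so a suitable power of $\alpha$ makes $b=0$; thus $\Omega(x)=x$.

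With $\Omega(x)=x$ fixed, I would exploit the defining relation: applying $\Omega$ to $yxy^{-1}=x^{1+p^i}$ gives $\Omega(y)\,x\,\Omega(y)^{-1}=x^{(1+p^i)^e}$, which must equal $x^{1+p^i}$. Since $1+p^i$ has order $p^{n-i}$ modulo $p^n$ (Lemmas \ref{uno} and \ref{cuno}), this yields $e\equiv 1\bmod p^{n-i}$, so $\Omega(y)=x^cy$. The order of $\Omega(y)$ being $p^{n-i}$ then forces $v_p(c)\ge i$ by Lemma \ref{dos}. Finally, the inner automorphism $\delta_{x^k}$ fixes $x$ and sends $y\mapsto x^{-kp^i}y$ (computed from $xyx^{-1}=x^{-p^i}y$), hence it alters $c$ by multiples of $p^i$; since $p^i\mid c$, a suitable $\delta_{x^k}$ produces $\Omega(y)=y$, so $\Omega$ is now the identity, and unwinding the reductions proves the claim.

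I expect the main obstacle to be matching the available degrees of freedom to the free parameters of a general $\Omega$: one must verify precisely that the order constraints (via Lemma \ref{dos}) together with the abelianization constraint cut $b$ down to the lattice $p^{\max(n-2i,0)}\Z$ reached by $\alpha$, and $c$ down to $p^i\Z$ reached by $\delta_{x^k}$, while the defining relation pins $e\equiv 1$. The case distinction $2i\le n$ versus $2i\ge n$ enters only through the value of $c$ in the definition of $\alpha$, which I would absorb uniformly by writing $c=p^{\max(n-2i,0)}$ so that the same reduction covers both regimes.
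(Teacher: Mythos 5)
Your proof is correct and follows the same reduction strategy as the paper's: left-compose with a power of $\beta$ to normalize the $x$-exponent of $\Omega(x)$ (using that $g$ generates $(\Z/p^n\Z)^\times$), then with a power of $\alpha$ to kill the $y$-part, then use preservation of the relation $yxy^{-1}=x^{1+p^i}$ together with Lemma \ref{dos} to force $\Omega(y)=x^c y$ with $v_p(c)\geq i$, and finish with a suitable $\delta_x^t$. The only point where you genuinely diverge is the justification that $b$ lies in the reachable lattice: the paper observes that $\langle xy^b\rangle$ is normal in $\U$, being the image under an automorphism of the normal subgroup $\langle x\rangle$, and then invokes Lemma \ref{tres} to conclude $v_p(b)\geq n-2i$; you instead pass to the abelianization $\U/[\U,\U]\cong \Z/p^i\Z\times\Z/p^{n-i}\Z$ (using that $[\U,\U]=\langle x^{p^i}\rangle$ is characteristic) and argue that the induced automorphism preserves the order $p^i$ of $\overline{x}$. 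Both arguments produce exactly the constraint $p^{\max(n-2i,0)}\mid b$ needed for the $\alpha$-step, so the substitution is harmless; yours avoids Lemma \ref{tres} entirely at the small cost of identifying the derived subgroup, while the paper's route reuses a lemma it needs anyway. Your explicit computations ($\alpha^m(xy^b)=xy^{b+mc}$, $\delta_{x^k}(y)=x^{-kp^i}y$, and the order of $1+p^i$ modulo $p^n$) are all accurate.
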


\begin{proof} Let $\gamma\in \Aut(\U)$. By Lemma \ref{dos}, we have $\gamma(x)=x^a y^b$, where $p\nmid a$.
Thus $\beta^r\gamma(x)=xy^b$ for some $r$. As $\langle x\rangle$ is normal in $\U$,
it follows from Lemmas \ref{tres} and \ref{cinco} that $\alpha^s\beta^r\gamma(x)=x$
for some $s$. Since $\alpha^s\beta^r\gamma$ must preserve the relation $yxy^{-1}=x^{1+p^i}$, we see that
$\alpha^s\beta^r\gamma(y)=x^c y$ for some $c$. Here Lemma \ref{dos} implies that $v_p(c)\geq i$, so there is an integer $t$ such that
$\delta_x^t\alpha^s\beta^r\gamma$ fixes $x$ and $y$. Since $\mathrm{Inn}(\U)$ is normal in $\mathrm{Aut}(\U)$,
we infer that $\gamma\in\mathrm{Inn}(\U)\langle \alpha, \beta\rangle$.
\end{proof}

\begin{lemma}\label{relacion} Let $r$ be any integer relatively prime to $p$, and let $s$ be any odd positive integer
satisfying
$$
rs\equiv 1\mod p^n,
$$
whose existence is guaranteed by the fact that $p^n$ is odd.
Let $\gamma=\Omega_r\in\mathrm{Aut}(\U)$, as defined in~(\ref{omegar}), and set
$$
f=\begin{cases} p^{n-2i}(s-1)/2 & \text{ if }2i\leq n,\\
(s-1)/2  & \text{ if } 2i \geq n.
\end{cases}
$$
Then
$$
\gamma\alpha\gamma^{-1}=\delta_y^f \alpha^s.
$$
\end{lemma}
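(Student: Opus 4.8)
The plan is to use the fact that both sides of the claimed identity $\gamma\alpha\gamma^{-1}=\delta_y^f\alpha^s$ are automorphisms of $\U$, so it suffices to check that they agree on the two generators $x$ and $y$. On $y$ this is immediate: $\gamma$ and $\alpha$ both fix $y$, so the left-hand side fixes $y$, while $\alpha^s$ fixes $y$ and $\delta_y^f(y)=y$, so the right-hand side fixes $y$ as well. This reduces the whole lemma to a single computation at $x$. Throughout I write $m=1+p^i$ and set $b=p^{n-2i}$ when $2i\leq n$ and $b=1$ when $2i\geq n$, so that Lemma \ref{cinco} reads $\alpha(x)=xy^b$ and the exponent in the statement becomes $f=b(s-1)/2$, which is an integer precisely because $s$ is odd.

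Next I would evaluate each side at $x$. Since $\gamma=\Omega_r$ with $rs\equiv 1\mod p^n$, its inverse is $\Omega_s$, so $\gamma^{-1}(x)=x^s$. Using the relation $y^bx=x^{m^b}y^b$ repeatedly, a telescoping computation gives
$$
(xy^b)^s=x^k y^{sb},\qquad k=\sum_{j=0}^{s-1}m^{jb},
$$
whence the left-hand side equals $\gamma(\alpha(x^s))=\gamma(x^k y^{sb})=x^{rk}y^{sb}$. For the right-hand side, because $\alpha$ fixes $y$ one has $\alpha^s(x)=xy^{sb}$, and since $\delta_y(x)=x^m$ one has $\delta_y^f(x)=x^{m^f}$; hence the right-hand side equals $x^{m^f}y^{sb}$. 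As $x$ has order $p^n$, the lemma collapses to the single congruence
$$
rk\equiv m^f\mod p^n.
$$

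The core of the argument, and the step I expect to be the main obstacle, is to evaluate both sides of this congruence with Lemma \ref{uno}. That lemma gives, uniformly in both cases, $m^{jb}\equiv 1+jb p^i\mod p^n$: when $2i\leq n$ it yields this to modulus $p^{2i+(n-2i)}=p^n$ with $bp^i=p^{n-i}$, and when $2i\geq n$ it yields $(1+p^i)^j\equiv 1+jp^i$ to modulus $p^{2i}\supseteq p^n$ with $bp^i=p^i$. Summing the geometric series then gives
$$
k\equiv s+bp^i\,\frac{s(s-1)}{2}\mod p^n,
$$
so that $rk\equiv rs+bp^i\tfrac{s-1}{2}(rs)$; here $rs\equiv 1\mod p^n$ forces the first term to be $1$ and, since the factor $bp^i$ absorbs the discrepancy $rs-1$ modulo $p^n$, the second to be $bp^i(s-1)/2$, giving $rk\equiv 1+bp^i(s-1)/2$. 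On the other side, Lemma \ref{uno} applied to $m^f=(1+p^i)^{b(s-1)/2}$ produces exactly $m^f\equiv 1+bp^i(s-1)/2\mod p^n$, matching $rk$ and finishing the proof. The delicate points are the $p$-adic bookkeeping in the geometric sum, the replacement of $rs$ by $1$ inside the term carrying the factor $bp^i$ (for which the full congruence $rs\equiv 1\mod p^n$ is more than enough), and the use of $p$ odd together with $s$ odd to keep $(s-1)/2$ integral; the two cases $2i\leq n$ and $2i\geq n$ can be carried out in parallel by keeping $b$ symbolic.
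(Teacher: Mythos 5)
Your proposal is correct and follows essentially the same route as the paper: evaluate both sides on the generators (the paper leaves the trivial $y$-check implicit), reduce to the congruence $rk\equiv(1+p^i)^f\bmod p^n$, and settle it by applying Lemma \ref{uno} to each term of the geometric sum $k=\sum_{j=0}^{s-1}m^{jb}$ and to $m^f$, using $rs\equiv 1\bmod p^n$ to absorb the factor $rs$. The paper's proof handles the two cases $2i\leq n$ and $2i\geq n$ with the same symbolic device (its $\ell$ is your $b$), so there is no substantive difference.
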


\begin{proof} Set $$
j=\begin{cases} i & \text{ if }2i\leq n,\\
 n-i & \text{ if } 2i\geq n,
\end{cases}\;
\ell=\begin{cases} p^{n-2i} & \text{ if }2i\leq n,\\
1 & \text{ if } 2i \geq n.
\end{cases}
$$
We then have
$$
\gamma\alpha \gamma^{-1}(x)=\gamma\alpha(x^s)=\gamma(xy^\ell)^s=x^{rk} y^{\ell s},
$$
where
$$
k=1+(1+p^i)^\ell+\cdots+(1+p^i)^{\ell(s-1)}.
$$
Here Lemma \ref{uno} yields
$$
rk\equiv r[s+p^{n-j}(1+2+\cdots+(s-1))]\equiv 1+p^{n-j}r s(s-1)/2\equiv 1+p^{n-j}(s-1)/2\mod p^n.
$$
On the other hand, we have
$$
\delta_y^f \alpha^s(x)=\delta_y^f(xy^{\ell s})=\delta_y^f(x)y^{\ell s}=x^{(1+p^i)^f}y^{\ell s},
$$
where, according to Lemma \ref{uno},
$$
(1+p^i)^f\equiv 1+p^{n-j}(s-1)/2\mod p^n.
$$
\end{proof}

Recalling the meanings of $\beta$ and $h$ from (\ref{aldos}) and (\ref{eqh}), respectively,
Lemma \ref{relacion} ensures that if $2i\leq n$, then
\begin{equation}
\label{releq1}
\beta\alpha\beta^{-1}=\delta_y^{p^{n-2i}(h-1)/2}\alpha^h,
\end{equation}
while if $2i\geq n$ then
\begin{equation}
\label{releq2}
\beta\alpha\beta^{-1}=\delta_y^{(h-1)/2}\alpha^h.
\end{equation}

\begin{lemma}\label{order} The order of $\overline{\beta}\in \mathrm{Out}(\U)$ is $p^{i-1}(p-1)$.
\end{lemma}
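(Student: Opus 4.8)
The plan is to compute directly the least positive integer $k$ for which $\beta^k$ becomes an inner automorphism, since this integer is by definition the order of $\overline{\beta}$ in $\mathrm{Out}(\U)$. Recall from (\ref{omegar}) and (\ref{aldos}) that $\beta=\Omega_g$, so $\beta^k=\Omega_{g^k}$ acts by $x\mapsto x^{g^k}$ and $y\mapsto y$. I would therefore need to decide for which $k$ there is an element $w\in\U$ with $\delta_w=\Omega_{g^k}$.

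First I would write every $w\in\U$ as $w=x^a y^b$ and compute $\delta_w$ explicitly, using only the relation $yxy^{-1}=x^{1+p^i}$ (equivalently $y^b x^m y^{-b}=x^{m(1+p^i)^b}$). A short calculation gives
$$
\delta_w(x)=x^{(1+p^i)^b},\qquad \delta_w(y)=x^{-ap^i}y.
$$
Comparing with $\Omega_{g^k}$, the requirement on the image of $y$ reads $x^{-ap^i}=1$, i.e.\ $a\equiv 0\bmod p^{n-i}$, which is solvable independently of $k$ (for instance $a=0$); so the $y$-component imposes no constraint on $k$. The requirement on the image of $x$ reads $g^k\equiv (1+p^i)^b\bmod p^n$ for some $b$. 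Hence $\beta^k\in\mathrm{Inn}(\U)$ precisely when $g^k$ lies in the cyclic subgroup $\langle 1+p^i\rangle$ of $(\Z/p^n\Z)^\times$.

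It then remains to translate this into a divisibility condition on $k$. By Lemma \ref{uno}, $1+p^i$ has order $p^{n-i}$ modulo $p^n$, so $\langle 1+p^i\rangle$ is the unique subgroup of that order in the cyclic group $(\Z/p^n\Z)^\times$ of order $p^{n-1}(p-1)$. Since $g$ is a generator of this group by (\ref{orderg}), that subgroup equals $\langle g^{p^{i-1}(p-1)}\rangle$, and $g^k$ belongs to it if and only if $p^{i-1}(p-1)\mid k$. The least positive such $k$ is $p^{i-1}(p-1)$, which is therefore the order of $\overline{\beta}$.

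I expect the only delicate point to be the commutator bookkeeping in the computation of $\delta_w$ and, relatedly, the observation that the action on $y$ can always be neutralised by a suitable choice of $a$, so that the order of $\overline{\beta}$ is governed entirely by the $x$-component. Relation (\ref{releq3}) already shows that $\beta^{p^{i-1}(p-1)t}$ is inner, and the argument above pins down the exact order.
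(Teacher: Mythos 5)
Your proposal is correct and takes essentially the same route as the paper: the paper's one-line proof identifies the order of $\overline{\beta}$ with the order of the coset $g\langle 1+p^i\rangle$ in $(\Z/p^{n}\Z)^\times/\langle 1+p^i\rangle$, which is exactly the reduction you establish. Your explicit computation of $\delta_w$ for $w=x^ay^b$ (showing $\beta^k$ is inner precisely when $g^k\in\langle 1+p^i\rangle$ modulo $p^n$) is just the justification the paper leaves implicit, followed by the same counting in the cyclic group $(\Z/p^n\Z)^\times$.
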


\begin{proof} This is the order of $g\langle 1+p^i\rangle\in (\Z/p^{n}\Z)^\times/\langle 1+p^i\rangle$.
\end{proof}

\begin{prop}\label{presout} If $2i\leq n$ then $\mathrm{Out}(\U)$ has order $p^{2i-1}(p-1)$ and presentation
$$
\mathrm{Out}(\U)\cong \langle a,b\,|\, a^{p^i}=1, b^{p^{i-1}(p-1)}=1, bab^{-1}=a^h\rangle\cong \mathrm{Hol}(\Z/p^{i}\Z),
$$
while if $2i\geq n$ then $\mathrm{Out}(\U)$ has order $p^{n-1}(p-1)$, presentation
$$
\mathrm{Out}(\U)\cong \langle a,b\,|\, a^{p^{n-i}}=1, b^{p^{i-1}(p-1)}=1, bab^{-1}=a^h\rangle,
$$
and is isomorphic to
$$
\mathrm{Hol}(\Z/p^{i}\Z)/(p^{n-i}\Z/p^{i}\Z)\cong (\Z/p^{n-i}\Z)\rtimes (\Z/p^{i}\Z)^\times.
$$
\end{prop}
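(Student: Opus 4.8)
The plan is to use Proposition \ref{gen}, which gives $\mathrm{Aut}(\U)=\mathrm{Inn}(\U)\langle\alpha,\beta\rangle$ and hence $\mathrm{Out}(\U)=\langle\overline{\alpha},\overline{\beta}\rangle$. The three relations of the asserted presentation are then checked directly in $\mathrm{Out}(\U)$: the relation $b^{p^{i-1}(p-1)}=1$ is exactly Lemma \ref{order}; the relation $bab^{-1}=a^h$ is obtained by reducing (\ref{releq1}) modulo $\mathrm{Inn}(\U)$, the inner factor $\delta_y^{p^{n-2i}(h-1)/2}$ being killed in $\mathrm{Out}(\U)$; and $a^{p^i}=1$ holds because $\alpha$ has order $p^i$ by Lemma \ref{cinco}, so $\overline{\alpha}^{p^i}=1$. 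This produces an epimorphism $\pi\colon\Gamma\to\mathrm{Out}(\U)$, where $\Gamma=\langle a,b\mid a^{p^i}=1,\,b^{p^{i-1}(p-1)}=1,\,bab^{-1}=a^h\rangle$, and the relations show at once that $|\Gamma|\le p^{2i-1}(p-1)$.

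The remaining and main task is to prove that $\pi$ is injective, equivalently that $|\mathrm{Out}(\U)|\ge p^{2i-1}(p-1)$; this is where the real content lies, since a priori some power of $\alpha$, or a mixed word in $\alpha,\beta$, could become inner. My tool is the action of $\mathrm{Out}(\U)$ on the abelianization. Since inner automorphisms act trivially on $\U/[\U,\U]$, the natural map $\mathrm{Aut}(\U)\to\mathrm{Aut}(\U/[\U,\U])$ factors through a homomorphism $\psi\colon\mathrm{Out}(\U)\to\mathrm{Aut}(\U/[\U,\U])$. A short computation gives $[\U,\U]=\langle x^{p^i}\rangle$, so $\U/[\U,\U]\cong\Z/p^i\Z\times\Z/p^{n-i}\Z$ on the images $\bar x,\bar y$ of $x,y$. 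On this group $\psi(\overline{\alpha})$ is $\bar x\mapsto\bar x\bar y^{p^{n-2i}},\ \bar y\mapsto\bar y$, which has order $p^i$, and $\psi(\overline{\beta})$ is $\bar x\mapsto\bar x^{g},\ \bar y\mapsto\bar y$ (as $\beta=\Omega_g$ by (\ref{aldos})), which has order $p^{i-1}(p-1)$ because the primitive root $g$ modulo $p^n$ of (\ref{orderg}) reduces to a primitive root modulo $p^i$. Hence $\overline{\alpha}$ and $\overline{\beta}$ already have orders $p^i$ and $p^{i-1}(p-1)$, and $\psi$ is faithful on each of $\langle\overline{\alpha}\rangle,\langle\overline{\beta}\rangle$.

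Next I would show $\psi(\langle\overline{\alpha}\rangle)\cap\psi(\langle\overline{\beta}\rangle)=1$: an element common to both fixes $\bar y$, has trivial $\bar y$-contribution in the image of $\bar x$ (forced by the $\overline{\beta}$-side) and no change in the $\bar x$-exponent (forced by the $\overline{\alpha}$-side), so it is the identity; using the splitting $\U/[\U,\U]=\langle\bar x\rangle\times\langle\bar y\rangle$ this is immediate. Faithfulness of $\psi$ on $\langle\overline{\alpha}\rangle$ then upgrades this to $\langle\overline{\alpha}\rangle\cap\langle\overline{\beta}\rangle=1$ in $\mathrm{Out}(\U)$. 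Since $\langle\overline{\alpha}\rangle$ is normal by the conjugation relation, $\mathrm{Out}(\U)=\langle\overline{\alpha}\rangle\langle\overline{\beta}\rangle$ has order $p^i\cdot p^{i-1}(p-1)=p^{2i-1}(p-1)=|\Gamma|$, so $\pi$ is an isomorphism and the presentation is established. To identify the isomorphism type I would note that, because $h\equiv g^{-1}$ modulo $p^n$ by (\ref{eqh}) and $g$ is a primitive root, $h$ generates $(\Z/p^i\Z)^\times$; thus $b$ acts on $\langle a\rangle\cong\Z/p^i\Z$ as a generator of its automorphism group, and the presentation is precisely that of $\mathrm{Hol}(\Z/p^i\Z)=(\Z/p^i\Z)\rtimes(\Z/p^i\Z)^\times$.

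The case $2i\ge n$ runs in complete parallel, the only changes being that $\alpha$ has order $p^{n-i}$ (Lemma \ref{cinco}), that one reduces (\ref{releq2}) in place of (\ref{releq1}), and that the same $\psi$-argument (now $\psi(\overline{\alpha})\colon\bar x\mapsto\bar x\bar y$) yields $|\overline{\alpha}|=p^{n-i}$ and $|\mathrm{Out}(\U)|=p^{n-1}(p-1)$. For the structural description I would observe that the presented group is $\langle a\rangle\rtimes\langle b\rangle$ with $|a|=p^{n-i}$ and $b$ of order $p^{i-1}(p-1)$ acting through $h\bmod p^{n-i}$; identifying $\langle b\rangle$ with $(\Z/p^i\Z)^\times$ and noting that $p^{n-i}\Z/p^i\Z$ is characteristic in $\Z/p^i\Z$ (hence normal in the holomorph) with quotient $\Z/p^{n-i}\Z$, this is exactly $\mathrm{Hol}(\Z/p^i\Z)/(p^{n-i}\Z/p^i\Z)\cong(\Z/p^{n-i}\Z)\rtimes(\Z/p^i\Z)^\times$. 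I expect the main obstacle throughout to be the lower bound on $|\mathrm{Out}(\U)|$, that is, ruling out unexpected coincidences among powers of $\alpha$, $\beta$ and inner automorphisms; the abelianization action $\psi$ is what makes this clean and sidesteps delicate normal-form computations in $\U$ itself.
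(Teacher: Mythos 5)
Your proof is correct, and it shares the paper's skeleton: surjectivity of $a\mapsto\overline{\alpha}$, $b\mapsto\overline{\beta}$ from Proposition \ref{gen}, verification of the three relations from Lemma \ref{cinco}, Lemma \ref{order} and (\ref{releq1})--(\ref{releq2}), an upper bound on the order of the presented group, and then a matching lower bound for $|\mathrm{Out}(\U)|$ obtained from $|\overline{\alpha}|=|\alpha|$ together with $\langle\overline{\alpha}\rangle\cap\langle\overline{\beta}\rangle=1$. The genuine difference lies in how that last step is established. The paper works directly inside $\mathrm{Aut}(\U)$: from the explicit definitions (\ref{aluno}), (\ref{omegar}) and (\ref{aldos}) it observes that any identity $\beta^k=\alpha^\ell\delta_u$ forces $\alpha^\ell=1$ (comparing $y$-parts in the semidirect product $\U=\langle x\rangle\rtimes\langle y\rangle$), which yields both the order claim and the trivial intersection at once. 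You instead factor everything through the abelianization: since $\mathrm{Inn}(\U)$ acts trivially on $\U/[\U,\U]=\langle\bar{x}\rangle\times\langle\bar{y}\rangle\cong\Z/p^i\Z\times\Z/p^{n-i}\Z$ (using $[\U,\U]=\langle x^{p^i}\rangle$), you obtain $\psi:\mathrm{Out}(\U)\to\mathrm{Aut}(\U/[\U,\U])$, and the order and intersection computations become componentwise statements in an abelian group, handled uniformly in the two cases $2i\le n$ and $2i\ge n$. Your route requires the small extra inputs $[\U,\U]=\langle x^{p^i}\rangle$ and the fact that the primitive root $g$ of (\ref{orderg}) remains a primitive root modulo $p^i$, but it buys a mechanism that kills inner automorphisms for free and avoids any computation with $\delta_u$; the paper's one-line computation is shorter but more ad hoc. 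A further minor difference: you spell out the identifications with $\mathrm{Hol}(\Z/p^{i}\Z)$ and with its quotient $(\Z/p^{n-i}\Z)\rtimes(\Z/p^{i}\Z)^\times$ (via $h\equiv g^{-1}$ being a primitive root modulo $p^i$), which the paper's proof leaves implicit.
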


\begin{proof} Let $F$ be the free group on $\{a,b\}$ and consider the homomorphism $\Delta: F\to \mathrm{Out}(\U)$
$$
a\mapsto \overline{\alpha},\; b\mapsto \overline{\beta}.
$$
This is surjective by Proposition \ref{gen}. Set $j=i$ if $2i\leq n$ and $j=n-i$ if $2i\geq n$. By Lemma \ref{cinco},
Lemma \ref{order}, and equations (\ref{releq1})-(\ref{releq2}) we have
$Y=\{a^{p^{j}},b^{p^{i-1}(p-1)},bab^{-1}a^{-h}\}\subset\ker\Delta$.
Let $N$ be the normal closure of $Y$ in $F$ and let $\Gamma:F/N\to \mathrm{Out}(\U)$ be epimorphism associated to $\Delta$.
It is clear that $|F/N|\leq p^{i+j-1}(p-1)$. On the other hand, the definitions (\ref{aluno}), (\ref{omegar}), and
(\ref{aldos}) of $\alpha$ and $\beta$ yield that  $\beta^k=\alpha^\ell\delta_u$ implies $\alpha^\ell=1$. In particular,
$\alpha$ and $\overline{\alpha}$ have the same order, and
$\langle\overline{\alpha}\rangle\cap \langle\overline{\beta}\rangle$ is  trivial.
Thus, Lemmas \ref{cinco} and \ref{order} give
$|\mathrm{Out}(\U)|=p^{i+j-1}(p-1)$, whence $\Gamma$ is an isomorphism.
\end{proof}

\begin{lemma}\label{nueve} We have $Z(\U)=\langle x^{p^{n-i}}\rangle$, so that
$\mathrm{Inn}(\U)$ has order $p^{2(n-i)}$ and
presentation
$$
\mathrm{Inn}(\U)=\langle u,v\,|\, u^{p^{n-i}}=1, v^{p^{n-i}}=1, vuv^{-1}=u^{1+p^i}\rangle\cong C_{p^{n-i}}\rtimes C_{p^{n-i}}.
$$
This is an abelian group if and only if $2i\geq n$.
\end{lemma}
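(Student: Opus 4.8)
The plan is to compute $Z(\U)$ directly from the semidirect-product normal form, to deduce $|\mathrm{Inn}(\U)|$ by Lagrange, and then to pin down the presentation by an order count. Since $\U=C_{p^n}\rtimes C_{p^{n-i}}$ is split metacyclic, I would first recall that every element is written uniquely as $x^ay^b$ with $0\le a<p^n$ and $0\le b<p^{n-i}$, and that $z=x^ay^b$ lies in $Z(\U)$ iff it commutes with both generators $x$ and~$y$. Using $yx^ay^{-1}=x^{a(1+p^i)}$, commutation with $y$ amounts to $ap^i\equiv 0\bmod p^n$, i.e. $p^{n-i}\mid a$. Commutation with $x$ gives $x^{a+1}y^b=x^{a+(1+p^i)^b}y^b$, hence $(1+p^i)^b\equiv 1\bmod p^n$; since Lemma~\ref{uno} (equivalently Lemma~\ref{cuno}) shows $1+p^i$ has order $p^{n-i}$ modulo $p^n$, and $0\le b<p^{n-i}$, this forces $b=0$. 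I would then conclude $Z(\U)=\{x^a:p^{n-i}\mid a\}=\langle x^{p^{n-i}}\rangle$, as claimed.

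Next I would observe that $x$ has order $p^n$, so $x^{p^{n-i}}$ has order $p^i$, giving $|Z(\U)|=p^i$; as $|\U|=p^{2n-i}$, Lagrange yields $|\mathrm{Inn}(\U)|=|\U|/|Z(\U)|=p^{2(n-i)}$. For the presentation I would set $u=\delta_x$ and $v=\delta_y$, which generate $\mathrm{Inn}(\U)\cong\U/Z(\U)$. Because $\delta$ is a homomorphism, $v^{p^{n-i}}=\delta_{y^{p^{n-i}}}=1$, while $u^{p^{n-i}}=\delta_{x^{p^{n-i}}}=1$ precisely because $x^{p^{n-i}}\in Z(\U)$, and $vuv^{-1}=\delta_{yxy^{-1}}=\delta_{x^{1+p^i}}=u^{1+p^i}$. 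Thus all three stated relations hold in $\mathrm{Inn}(\U)$.

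To see these relations are \emph{defining}, I would introduce the abstract group $P=\langle u,v\mid u^{p^{n-i}}=1,\ v^{p^{n-i}}=1,\ vuv^{-1}=u^{1+p^i}\rangle$. The conjugation relation lets one move every $v$ past every $u$, so each element of $P$ has the form $u^av^b$ with $0\le a,b<p^{n-i}$, whence $|P|\le p^{2(n-i)}$. Since the three relations hold in $\mathrm{Inn}(\U)$ there is an epimorphism $P\to\mathrm{Inn}(\U)$, and the two orders coincide, so it is an isomorphism and $\mathrm{Inn}(\U)\cong C_{p^{n-i}}\rtimes C_{p^{n-i}}$. Finally, $\mathrm{Inn}(\U)$ is abelian iff the defining action is trivial, i.e. $u^{1+p^i}=u$ in $P$, i.e. $p^i\equiv 0\bmod p^{n-i}$, i.e. $2i\ge n$.

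I expect the only genuinely delicate point to be the order bound $|P|\le p^{2(n-i)}$: one must check that the conjugation relation is consistent with $v^{p^{n-i}}=1$, that is, that $(1+p^i)^{p^{n-i}}\equiv 1\bmod p^{n-i}$. This again follows from Lemma~\ref{uno}, which shows that $1+p^i$ has order $p^{\max\{n-2i,\,0\}}$ modulo $p^{n-i}$, a divisor of $p^{n-i}$; once this is in hand, the remainder is routine bookkeeping with the normal form.
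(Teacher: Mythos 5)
Your proof is correct and follows essentially the same route as the paper: identify $Z(\U)=\langle x^{p^{n-i}}\rangle$ from the semidirect-product structure, then obtain $|\mathrm{Inn}(\U)|$ and the presentation by an order count. The paper compresses the center computation into a single sentence (since $\langle y\rangle$ acts faithfully on the abelian group $\langle x\rangle$, the central elements are exactly the elements of $\langle x\rangle$ fixed by $y$) and leaves the presentation bookkeeping implicit; your normal-form computation of the center and your epimorphism-plus-order-count argument for the defining relations are just the unpacked versions of the same idea.
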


\begin{proof} Since $\langle y\rangle$ acts faithfully on $\langle x\rangle$, the central elements of $\U=\langle x\rangle\rtimes
\langle y\rangle$ are the elements of $\langle x\rangle$ that are fixed by all elements of $\langle y\rangle$, which are readily
seen to be the powers of $x^{p^{n-i}}$.
\end{proof}

\begin{lemma}\label{gcom} Let $G=\langle a,b,c\rangle$ be a group, where $c$ normalizes $\langle a,b\rangle$,
$a$ and $b$ commute with $[a,b]$, and $[a,b]\in \langle c\rangle$. Then $G=\langle a\rangle\langle b\rangle \langle c\rangle$.
\end{lemma}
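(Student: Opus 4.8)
The plan is to reduce everything to a normal-form statement inside the two-generator subgroup $H=\langle a,b\rangle$. First I would note that since $c$ normalizes $H$, so does $\langle c\rangle$ (conjugating $cHc^{-1}=H$ by $c^{-1}$ gives $c^{-1}Hc=H$), and therefore $H\langle c\rangle=\langle c\rangle H$ is a subgroup. As it contains $a,b,c$, it equals $G$, so $G=H\langle c\rangle$. Thus every element of $G$ has the shape $h\,c^m$ with $h\in H$ and $m\in\Z$, and it remains to pin down the form of $h$.

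The heart of the argument is a collection (normal-form) computation in $H$. Writing $z=[a,b]=aba^{-1}b^{-1}$, the hypothesis that $a$ and $b$ each commute with $z$ says exactly that $z$ is central in $H$, so $H$ is nilpotent of class at most $2$. From $z=aba^{-1}b^{-1}$ one gets $ab=z\,ba$, hence $ba=z^{-1}ab$, and more generally $b^{j}a^{i}=z^{-ij}a^{i}b^{j}$ using centrality of $z$. I would then check that the set $\{a^{i}b^{j}z^{k}\mid i,j,k\in\Z\}$ is closed under multiplication and inversion: a product $(a^{i_1}b^{j_1}z^{k_1})(a^{i_2}b^{j_2}z^{k_2})$ is rearranged, by pushing $b^{j_1}$ past $a^{i_2}$, into $a^{i_1+i_2}b^{j_1+j_2}z^{k_1+k_2-j_1 i_2}$, which is again of the prescribed form. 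Since this set contains $a$ and $b$, it coincides with $H$; that is, $H=\langle a\rangle\langle b\rangle\langle z\rangle$.

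Finally I would absorb the commutator into $\langle c\rangle$. By hypothesis $z=[a,b]\in\langle c\rangle$, so every power $z^{k}$ is a power of $c$, giving $H=\langle a\rangle\langle b\rangle\langle z\rangle\subseteq\langle a\rangle\langle b\rangle\langle c\rangle$. Combining this with $G=H\langle c\rangle$, an arbitrary element of $G$ is $a^{i}b^{j}c^{k}\cdot c^{m}=a^{i}b^{j}c^{k+m}$, since consecutive powers of $c$ multiply within $\langle c\rangle$. Therefore $G=\langle a\rangle\langle b\rangle\langle c\rangle$, as claimed.

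There is no serious obstacle here: the only genuine computation is the class-$2$ collection yielding the normal form $a^{i}b^{j}z^{k}$ in $H$, and this is entirely routine once centrality of $z$ is in hand; the remaining steps are bookkeeping, so I would state them briefly rather than expand them.
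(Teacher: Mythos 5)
Your proof is correct and follows essentially the same route as the paper's: the paper likewise decomposes $G=\langle a,b\rangle\langle c\rangle$ using the normalizing hypothesis, invokes the class-$2$ normal form $\langle a,b\rangle=\langle a\rangle\langle b\rangle\langle [a,b]\rangle$, and then absorbs $[a,b]$ into $\langle c\rangle$. The only difference is that you write out the collection computation ($b^{j}a^{i}=z^{-ij}a^{i}b^{j}$, closure of $\{a^{i}b^{j}z^{k}\}$) which the paper states without proof.
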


\begin{proof} By hypothesis, $G=\langle a,b\rangle\langle c\rangle$ and
$\langle a,b\rangle=\langle a\rangle\langle b\rangle\langle [a,b]\rangle$.  As $[a,b]\in \langle c\rangle$, the result follows.
\end{proof}

\begin{theorem}\label{ocho} The order of $\mathrm{Aut}(\U)$ is $p^{2n-1}(p-1)$ if $2i\leq n$,
and $p^{3n-2i-1}(p-1)$ if $2i\geq n$. Moreover, if
\begin{equation}
\label{eqmu}
\mu=\delta_x^{\ell}\beta\delta_x^{-\ell},
\end{equation}
then $\mathrm{Aut}(\U)$ is generated by $\alpha,\delta_x,\mu$ for any $\ell\in\Z$, and the choice
\begin{equation}
\label{eqell}
2\ell\equiv 1\mod p^n,
\end{equation}
yields the following defining relations, depending on whether $2i\leq n$ or $2i\geq n$:
\begin{equation}
\label{defrel}
\alpha^{p^i}=1, \delta_x^{p^{n-i}}=1,\mu^{p^{n-1}(p-1)}=1,
\alpha\delta_x\alpha^{-1}= \delta_x \mu^{p^{n-i-1}(p-1)t}, \mu \delta_x \mu^{-1}=\delta_x^g,
\mu\alpha \mu^{-1}=\alpha^h,
\end{equation}
\begin{equation}
\label{defrelmayor}
\alpha^{p^{n-i}}=1, \delta_x^{p^{n-i}}=1, \mu^{p^{n-1}(p-1)}=1,
\alpha\delta_x\alpha^{-1}= \delta_x \mu^{p^{i-1}(p-1)t}, \mu \delta_x \mu^{-1}=\delta_x^g,
\mu\alpha \mu^{-1}=\alpha^h.
\end{equation}
Moreover, if $\sigma\in\mathrm{Aut}(\U)$, then $\sigma=ABC$ for unique elements $A\in\langle\alpha\rangle$,
$B\in\langle\delta_x\rangle$, and $C\in\langle\mu\rangle$.
\end{theorem}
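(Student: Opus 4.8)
The plan is to deduce everything from the order of $\mathrm{Aut}(\U)$, the generation result of Proposition~\ref{gen}, and a normal-form argument in the abstract group presented by the listed relations. First, since $|\mathrm{Aut}(\U)|=|\mathrm{Inn}(\U)|\cdot|\mathrm{Out}(\U)|$, the stated orders are immediate: Lemma~\ref{nueve} gives $|\mathrm{Inn}(\U)|=p^{2(n-i)}$, and Proposition~\ref{presout} gives $|\mathrm{Out}(\U)|=p^{2i-1}(p-1)$ when $2i\leq n$ and $p^{n-1}(p-1)$ when $2i\geq n$, whose products with $p^{2(n-i)}$ are $p^{2n-1}(p-1)$ and $p^{3n-2i-1}(p-1)$. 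For generation, note that $\mu=\delta_x^{\ell}\beta\delta_x^{-\ell}$ yields $\beta=\delta_x^{-\ell}\mu\delta_x^{\ell}\in\langle\delta_x,\mu\rangle$, so $\langle\alpha,\delta_x,\mu\rangle=\langle\alpha,\delta_x,\beta\rangle$ for every $\ell$. By (\ref{releq3}) we have $\delta_y=\beta^{p^{i-1}(p-1)t}\in\langle\beta\rangle$, and Lemma~\ref{nueve} gives $\mathrm{Inn}(\U)=\langle\delta_x,\delta_y\rangle$; combining this with Proposition~\ref{gen} shows $\mathrm{Aut}(\U)=\mathrm{Inn}(\U)\langle\alpha,\beta\rangle=\langle\alpha,\delta_x,\mu\rangle$.

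Next I verify the listed relations. The orders $\alpha^{p^i}=1$ (resp.\ $\alpha^{p^{n-i}}=1$), $\delta_x^{p^{n-i}}=1$, and $\mu^{p^{n-1}(p-1)}=1$ come from Lemma~\ref{cinco}, from Lemma~\ref{nueve} ($x$ has order $p^{n-i}$ modulo $Z(\U)$), and from the fact that $\mu$ is conjugate to $\beta=\Omega_g$, whose order equals that of $g$ modulo $p^n$. The relation $\mu\delta_x\mu^{-1}=\delta_x^g$ reduces to $\beta\delta_x\beta^{-1}=\delta_{\beta(x)}=\delta_x^g$ after conjugating by the commuting powers of $\delta_x$, and $\alpha\delta_x\alpha^{-1}=\delta_x\mu^{c}$ (with $c=p^{n-i-1}(p-1)t$, resp.\ $p^{i-1}(p-1)t$) follows from (\ref{releq3}) together with the observation, via Lemma~\ref{uno}, that $\delta_y^{p^{n-2i}}$ (resp.\ $\delta_y$) is central in $\mathrm{Inn}(\U)$ and hence in $\mathrm{Aut}(\U)$, so that $\mu^{c}=\beta^{c}=\delta_y^{p^{n-2i}}$ (resp.\ $\delta_y$). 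The only relation requiring the precise choice $2\ell\equiv1\bmod p^n$ is $\mu\alpha\mu^{-1}=\alpha^h$: writing $\mu=\delta_{x^{\ell(1-g)}}\beta$ and feeding in (\ref{releq1}) (resp.\ (\ref{releq2})), a direct computation collapses the inner defect $\delta_y^{p^{n-2i}(h-1)/2}$, since $1-g$ is a unit modulo $p^n$ and the congruence $\ell(1-g)\equiv(1-g)/2$ that must hold is solved by $2\ell\equiv1$.

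The heart of the matter, and the step I expect to be the main obstacle, is showing these are \emph{defining} relations, i.e.\ that the abstract group $P=\langle a,d,m\rangle$ on the listed relations satisfies $|P|\leq|\mathrm{Aut}(\U)|$. The key point is that the commutator $[a,d]$ is a central power of $m$: from $ada^{-1}=dm^{c}$ we get $[a,d]=m^{c}$, and I claim $m^{c}\in Z(P)$. Indeed $m^{c}dm^{-c}=d^{g^{c}}$ and $m^{c}am^{-c}=a^{h^{c}}$, and the arithmetic $g^{c}\equiv1\bmod p^{n-i}$ and $h^{c}\equiv1\bmod p^{\min(i,\,n-i)}$ — which is exactly where the dichotomy $2i\leq n$ versus $2i\geq n$ enters, through the value of $c$ and the order of $a$ — forces both conjugations to be trivial. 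Granting this, the relations $mam^{-1}=a^{h}$ and $mdm^{-1}=d^{g}$ allow one to move every $m^{\pm1}$ to the right of a word, reducing it to $W(a,d)\,m^{*}$ with $W$ a word in $a,d$ only; and the rearrangement $da=adm^{-c}$ of $ada^{-1}=dm^{c}$, with $m^{c}$ central, lets one sort $W(a,d)$ into the form $a^{*}d^{*}m^{*c}$. Hence every element of $P$ has the form $a^{*}d^{*}m^{*}$, so $|P|\leq|\langle a\rangle|\,|\langle d\rangle|\,|\langle m\rangle|\leq|\mathrm{Aut}(\U)|$. Together with the surjection $P\twoheadrightarrow\mathrm{Aut}(\U)$ furnished by the previous two paragraphs, this yields equality and proves the relations are defining.

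Finally, transporting the normal form $a^{*}d^{*}m^{*}$ to $\mathrm{Aut}(\U)$ shows $\mathrm{Aut}(\U)=\langle\alpha\rangle\langle\delta_x\rangle\langle\mu\rangle$ as a set product. Since $|\langle\alpha\rangle|\,|\langle\delta_x\rangle|\,|\langle\mu\rangle|$ equals $|\mathrm{Aut}(\U)|$ in both cases (by Lemmas~\ref{cinco} and~\ref{nueve} and the order of $\mu$), the multiplication map $\langle\alpha\rangle\times\langle\delta_x\rangle\times\langle\mu\rangle\to\mathrm{Aut}(\U)$ is a surjection between finite sets of equal cardinality, hence a bijection. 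This is precisely the asserted existence and uniqueness of the factorization $\sigma=ABC$, completing the proof.
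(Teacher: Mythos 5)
Your proposal is correct and its skeleton matches the paper's proof: the order from $|\mathrm{Inn}(\U)|\cdot|\mathrm{Out}(\U)|$ (Lemma \ref{nueve}, Proposition \ref{presout}); generation from Proposition \ref{gen} and (\ref{releq3}); verification of the relations via centrality of $\delta_y^{k}$ (with $k=p^{n-2i}$ or $k=1$) and the choice $2\ell\equiv 1\bmod p^n$; then an upper bound on the order of the abstractly presented group. Where you genuinely differ is in that last step and in how the normal form is obtained. The paper bounds the abstract group by exhibiting $\langle\delta_x,\delta_y\rangle$, with $\delta_y=\mu^{p^{i-1}(p-1)t}$, as a normal subgroup of order at most $p^{2(n-i)}$ whose quotient has order at most $p^{i-1}(p-1)$ times the order bound on $\alpha$, and then gets the factorization $\sigma=ABC$ from a separate sorting lemma (Lemma \ref{gcom}) applied to $\mathrm{Aut}(\U)$ itself. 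You instead prove that $m^{c}=[a,d]$ is central in the abstract group $P$ --- your congruences $g^{c}\equiv 1\bmod p^{n-i}$ and $h^{c}\equiv 1\bmod p^{\min(i,\,n-i)}$ are exactly right, and correctly locate where the dichotomy $2i\leq n$ versus $2i\geq n$ enters --- and then run the sorting argument inside $P$, which produces the order bound and the normal form simultaneously; this is a slightly more economical packaging of the same ideas. One caveat: the inference that $\delta_y^{k}$ is ``central in $\mathrm{Inn}(\U)$ and hence in $\mathrm{Aut}(\U)$'' is not valid as stated, since centrality in a normal subgroup does not pass to the ambient group, and your computation of $\mu\alpha\mu^{-1}$ really does use that $\delta_y^{k}$ commutes with $\alpha$, not only with $\delta_x$. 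The claim itself is true and is exactly what the paper verifies at (\ref{jata}): $\delta_y^{k}$ commutes with $\delta_x$ by Lemma \ref{uno} and $Z(\U)=\langle x^{p^{n-i}}\rangle$, with $\beta$ because it is a power of $\beta$, and with $\alpha$ because $\alpha(y)=y$; replace the ``hence'' by these one-line checks and your argument is complete.
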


\begin{proof} Set
$$
j=\begin{cases} i & \text{ if }2i\leq n,\\
n-i & \text{ if }2i\geq n,\quad
\end{cases}
k=\begin{cases} p^{n-2i} & \text{ if }2i\leq n,\\
1 & \text{ if }2i\geq n.
\end{cases}
$$
By Proposition \ref{presout} and Lemma \ref{nueve}, we have
$$
|\mathrm{Out}(\U)|=p^{i-1+j}(p-1),\; |\mathrm{Inn}(\U)|=p^{2(n-i)},
$$
so
$$
|\mathrm{Aut}(\U)|=p^{2(n-i)+i-1+j}(p-1),
$$
as claimed.

We have $\delta_y=\beta^{p^{i-1}(p-1)t}$ by (\ref{releq3}), so $\alpha,\delta_x,\beta$
generate $\mathrm{Aut}(\U)$ by Proposition \ref{gen}. We next verify the analogs of (\ref{defrel})-(\ref{defrelmayor})
for these generators. By Lemma \ref{cinco}, we have $\alpha^{p^j}=1$. Since $g$ has order $p^{n-1}(p-1)$ modulo $p^n$, it follows that
$\beta^{p^{n-1}(p-1)}=1$. On the other hand, Lemma \ref{nueve} implies $\delta_x^{p^{n-i}}=1$. Moreover, according to
Lemma \ref{cinco},
\begin{equation}
\label{conh}
\alpha\delta_x\alpha^{-1}=\delta_{\alpha(x)}=\delta_x\delta_y^{k}=\delta_x\beta^{k p^{i-1}(p-1)t},
\end{equation}
while the very definition of $\beta$ yields
$$\beta \delta_x \beta^{-1}=\delta_{\beta(x)}=\delta_x^g.$$
Furthermore, by (\ref{releq1}) and
(\ref{releq2}), we have
\begin{equation}
\label{ab}
\beta\alpha\beta^{-1}=\delta_y^{k(h-1)/2}\alpha^h.
\end{equation}
This completes the required verification.

It is clear that for any integer $\ell$, the elements $\alpha,\mu,\delta_x$ still generate $\mathrm{Aut}(\U)$.
We next show that (\ref{defrel})-(\ref{defrelmayor}) hold for a suitable choice of $\ell$.
It is evident that
$$
\mu^{p^{n-1}(p-1)}=1,\; \mu \delta_x \mu^{-1}=\delta_x^g,
$$
Observe next that
\begin{equation}
\label{jata}
\delta_y^{k}\in Z(\mathrm{Aut}(\U)).
\end{equation}
Indeed, we have
$$
\delta_y^{k}\delta_x\delta_y^{-k}=\delta_{u},
$$
where, by Lemma \ref{uno},
$$
u=x^{(1+p^i)^{k}}=x^{1+p^{n-j}}.
$$
Since $x^{p^{n-j}}\in Z(\U)$, it follows that $\delta_y^{k}$ commutes with $\delta_x$. As $\delta_y$
is a power of $\beta$, they commute. Since $\alpha(y)=y$, it follows that $\alpha$ and $\delta_y$ also commute.
This proves (\ref{jata}).

Since $\delta_x^\ell$ commutes with  $\delta_y^{k}=\beta^{k p^{i-1}(p-1)t}$, it follows
that
\begin{equation}
\label{upardo}
\mu^{k p^{i-1}(p-1)t}=\beta^{k p^{i-1}(p-1)t},
\end{equation}
whence (\ref{conh}) gives
\begin{equation}
\label{upa}
\alpha\delta_x\alpha^{-1}= \delta_x \mu^{k p^{i-1}(p-1)t}.
\end{equation}

Suppose next that $2\ell\equiv 1\mod p^n$. We claim that
$$
\mu\alpha \mu^{-1}=\alpha^h.
$$
Indeed, conjugating each side of (\ref{ab}) by $\delta_x^\ell$ and using that $\delta_y^{k}$ is central yields
\begin{equation}
\label{urra}
\mu(\delta_x^\ell\alpha\delta_x^{-\ell})\mu^{-1}=\delta_y^{k (h-1)/2}(\delta_x^\ell\alpha\delta_x^{-\ell})^h.
\end{equation}
To compute with (\ref{urra}), note that $[\delta_x^{-1},\alpha]=\delta_y^k$ by (\ref{conh}). As $\delta_y^{k}$ is central,
$[\delta_x,\alpha]^{-1}=\delta_y^k$, so
$$
\delta_x\alpha\delta_x^{-1}=\delta_y^{-k}\alpha,
$$
and therefore
\begin{equation}
\label{jara}
\delta_x^r\alpha\delta_x^{-r}=\delta_y^{-r k}\alpha,\quad r\in\Z.
\end{equation}
Successively using (\ref{jata}), (\ref{jara}), (\ref{urra}), (\ref{jara}), and (\ref{jata}), we infer that
$$
\delta_y^{-\ell k}\mu\alpha\mu^{-1} =\mu\delta_y^{-\ell k}\alpha\mu^{-1}=\mu(\delta_x^\ell\alpha\delta_x^{-\ell})\mu^{-1}=
\delta_y^{k (h-1)/2}(\delta_x^\ell\alpha\delta_x^{-\ell})^h=\delta_y^{k (h-1)/2}\delta_y^{-\ell h k}\alpha^h.
$$
Since $\ell(h-1)\equiv (h-1)/2\mod p^n$, the claim follows.

Thus, the generators $\alpha,\mu,\delta_x$ satisfy all of the relations (\ref{defrel}) if $2i\leq n$ and (\ref{defrelmayor}) if
$2i\geq n$.

Let $G$ any group generated by elements $\alpha,\mu,\delta_x$ that satisfy (\ref{defrel}) if $2i\leq n$ and (\ref{defrelmayor}) if $2i\geq n$. Set $\delta_y=\mu^{p^{i-1}(p-1)t}$. Then (\ref{defrel}) and (\ref{defrelmayor}) imply that
$\langle \delta_x,\delta_y\rangle$
is a normal subgroup of $G$ of order $\leq p^{2(n-i)}$, with
$|G/\langle \delta_x,\delta_y\rangle|\leq p^{i-1+j}(p-1)$, so that $|G|\leq p^{2(n-i)+i-1+j}(p-1)$. This shows that (\ref{defrel}) and
(\ref{defrelmayor})
are defining relations for $\mathrm{Aut}(\U)$.

It follows from Lemma \ref{gcom} that $\mathrm{Aut}(\U)=\langle \alpha\rangle\langle \beta\rangle\langle \mu\rangle$.
As $|\mathrm{Aut}(\U)|=|\langle \alpha\rangle||\langle \beta\rangle||\langle \mu\rangle|$, we obtain the stated
normal form for the elements of $\mathrm{Aut}(\U)$.
\end{proof}

Let $\mu$ be defined as in (\ref{eqmu}) with $\ell$ chosen so that (\ref{eqell}) holds. Recalling the meaning $t$ from
(\ref{eqt}), we set
\begin{equation}\label{eqnul}
\nu=\mu^{(p-1)t}.
\end{equation}


\begin{prop}\label{Syl} Let $\So$ be the subgroup of $\mathrm{Aut}(\U)$ generated by $\alpha,\delta_x,\nu$.
Then $\So$ is a normal Sylow $p$-subgroup of $\mathrm{Aut}(\U)$ with the following defining relations,
depending on whether $2i\leq n$ or $2i\geq n$:
\begin{equation}
\label{defrel2}
\alpha^{p^i}=1, \nu^{p^{n-1}}=1, \delta_x^{p^{n-i}}=1,
\alpha\delta_x\alpha^{-1}= \delta_x \nu^{p^{n-i-1}}, \nu \delta_x \nu^{-1}=\delta_x^{1+d p},
\nu\alpha \nu^{-1}=\alpha^{1+e p},
\end{equation}
\begin{equation}
\label{defrelmayor2}
\alpha^{p^{n-i}}=1, \nu^{p^{n-1}}=1, \delta_x^{p^{n-i}}=1,
\alpha\delta_x\alpha^{-1}= \delta_x \nu^{p^{i-1}}, \nu \delta_x \nu^{-1}=\delta_x^{1+d p},
\nu\alpha \nu^{-1}=\alpha^{1+e p}.
\end{equation}
In particular, $\So$ is isomorphic to $\S$.
\end{prop}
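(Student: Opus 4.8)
The plan is to reduce everything to Theorem \ref{ocho}: its presentation of $\mathrm{Aut}(\U)$ in terms of $\alpha,\delta_x,\mu$, together with its normal form, will do almost all the work once the substitution $\nu=\mu^{(p-1)t}$ from (\ref{eqnul}) is made. First I would verify that $\alpha,\delta_x,\nu$ satisfy (\ref{defrel2}) when $2i\leq n$ (and (\ref{defrelmayor2}) when $2i\geq n$). Each relation is an immediate consequence of the corresponding one in (\ref{defrel}) (resp. (\ref{defrelmayor})): since $p\nmid t$, the relation $\mu^{p^{n-1}(p-1)}=1$ gives $\nu^{p^{n-1}}=1$; the identity $\nu^{p^{n-i-1}}=\mu^{p^{n-i-1}(p-1)t}$ converts $\alpha\delta_x\alpha^{-1}=\delta_x\mu^{p^{n-i-1}(p-1)t}$ directly into $\alpha\delta_x\alpha^{-1}=\delta_x\nu^{p^{n-i-1}}$; and raising the conjugation relations to the power $(p-1)t$ and invoking (\ref{nixp}), namely $g^{(p-1)t}=1+dp$ and $h^{(p-1)t}=1+ep$, yields $\nu\delta_x\nu^{-1}=\delta_x^{1+dp}$ and $\nu\alpha\nu^{-1}=\alpha^{1+ep}$. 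The remaining relations $\alpha^{p^i}=1$ and $\delta_x^{p^{n-i}}=1$ already appear in (\ref{defrel}), and the $2i\geq n$ case is identical.

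The crucial observation is that (\ref{defrel2}) is, verbatim, the presentation (\ref{sime}) of $\S$ under the assignment $\A\mapsto\alpha$, $\B\mapsto\delta_x$, $\C\mapsto\nu$ (and likewise (\ref{defrelmayor2}) is (\ref{sima})). The only point needing care is that $\alpha\delta_x\alpha^{-1}=\delta_x\nu^{p^{n-i-1}}$ should read as the commutator relation $[\alpha,\delta_x]=\nu^{p^{n-i-1}}$, which requires that $\nu^{p^{n-i-1}}$ commute with $\delta_x$; this follows from Lemma \ref{uno} and $\delta_x^{p^{n-i}}=1$, since $(1+dp)^{p^{n-i-1}}\equiv 1+dp^{n-i}\mod p^{n-i+1}$. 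Consequently there is an epimorphism $\phi:\S\to\So$ sending $\A,\B,\C$ to $\alpha,\delta_x,\nu$, and Proposition \ref{heipres2} bounds $|\So|\leq|\S|$, which equals $p^{2n-1}$ if $2i\leq n$ and $p^{3n-2i-1}$ if $2i\geq n$.

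For the matching lower bound I would exhibit that many distinct elements inside $\So$. From the normal form of Theorem \ref{ocho} one has $|\langle\mu\rangle|=p^{n-1}(p-1)$, so $\nu=\mu^{(p-1)t}$ has order exactly $p^{n-1}$, as $\gcd((p-1)t,\,p^{n-1}(p-1))=p-1$. The uniqueness of the normal form $ABC$ then shows that the products $\alpha^a\delta_x^b\nu^c$, with $a,b,c$ ranging over residues modulo $|\alpha|$, $p^{n-i}$, and $p^{n-1}$, are pairwise distinct, since each is already written in normal form. Using Lemma \ref{cinco} (so $|\alpha|=p^i$ if $2i\leq n$ and $p^{n-i}$ if $2i\geq n$) and Lemma \ref{nueve} (so $|\delta_x|=p^{n-i}$), their number is exactly $p^{2n-1}$ (resp. $p^{3n-2i-1}$), meeting the upper bound. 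Hence $|\So|=|\S|$, the epimorphism $\phi$ is an isomorphism, the listed relations are defining, and $\So\cong\S$.

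Finally, $|\So|$ equals the full $p$-part of $|\mathrm{Aut}(\U)|$ as computed in Theorem \ref{ocho}, so $\So$ is a Sylow $p$-subgroup of index $p-1$; its normality follows from Sylow's theorem, since the only positive divisor of $p-1$ congruent to $1$ modulo $p$ is $1$. The relation-checking is mechanical once $\nu=\mu^{(p-1)t}$ is substituted, so the real obstacle is the bookkeeping: pinning down the order of $\nu$ as precisely $p^{n-1}$ and confirming that the count of distinct normal forms $\alpha^a\delta_x^b\nu^c$ lands exactly on the $p$-part of $|\mathrm{Aut}(\U)|$, which is what forces $\phi$ to be an isomorphism rather than merely a surjection.
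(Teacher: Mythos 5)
Your proof is correct, but it is organized differently from the paper's. The paper establishes normality and the Sylow property first, directly from Theorem \ref{ocho}: conjugation by $\mu$ preserves $\So$, and $\mathrm{Aut}(\U)/\So\cong\langle\mu\rangle/(\So\cap\langle\mu\rangle)\cong\langle\mu\rangle/\langle\nu\rangle\cong C_{p-1}$; it then proves the relations are defining by bounding the order of the abstract group on these relations from above by the $p$-part of $|\mathrm{Aut}(\U)|$, using that $\langle \delta_x,\nu^{p^{i-1}}\rangle$ is normal of order at most $p^{2(n-i)}$ with quotient of order at most $p^{i-1+j}$ (in the paper's notation), the isomorphism with $\S$ falling out because the presentations coincide. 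You instead make the identification with the presentation (\ref{sime})/(\ref{sima}) of $\S$ the engine of the whole argument: von Dyck's theorem yields an epimorphism $\S\to\So$, so $|\So|\leq|\S|$ by Proposition \ref{heipres2}, while uniqueness of the normal form in Theorem \ref{ocho}, together with the exact orders $|\alpha|$ (Lemma \ref{cinco}), $|\delta_x|=p^{n-i}$ (Lemma \ref{nueve}), and $|\nu|=p^{n-1}$ (your gcd computation is correct), gives the matching lower bound; this delivers the defining relations, $\So\cong\S$, and the Sylow property in one stroke, and normality then follows from Sylow counting --- an argument the paper itself uses for $\Pi$ in Theorem \ref{main3}. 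Your route buys some economy, since it reuses the order of $\S$ already computed in Proposition \ref{heipres2} rather than re-deriving an order bound for the presented group, and it makes explicit two points the paper leaves implicit: why $\So\cap\langle\mu\rangle=\langle\nu\rangle$, and why the conjugation relation $\alpha\delta_x\alpha^{-1}=\delta_x\nu^{p^{n-i-1}}$ is equivalent to the commutator relation $[\alpha,\delta_x]=\nu^{p^{n-i-1}}$ (your appeal to Lemma \ref{uno} and $\delta_x^{p^{n-i}}=1$ is exactly what is needed here, and is the step that legitimizes invoking von Dyck). The paper's route, in exchange, obtains normality structurally without Sylow's counting theorem and keeps the order bound self-contained. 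Both proofs rest on Theorem \ref{ocho}, and your verification that the relations descend via $\nu=\mu^{(p-1)t}$ and (\ref{nixp}) coincides with the paper's.
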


\begin{proof} Theorem \ref{ocho} implies that $\So$ is a normal subgroup of $\mathrm{Aut}(\U)$
and that
$$
\mathrm{Aut}(\U)/\So\cong \So\langle\mu\rangle/\So\cong
\langle\mu\rangle/(\So\cap\langle\mu\rangle)\cong \langle\mu\rangle/\langle\nu\rangle\cong C_{p-1},
$$
so $\So$ is a Sylow $p$-subgroup of $\mathrm{Aut}(\U)$.

The relations (\ref{defrel2})-(\ref{defrelmayor2})
follow easily from the relations (\ref{defrel})-(\ref{defrelmayor}). Set $j=i$ if $2i\leq n$ and $j=n-i$ if $2i\geq n$.
That (\ref{defrel2})-(\ref{defrelmayor2}) are defining relations follows from
the fact that any group $G$  generated by elements $\alpha,\nu,\delta_x$ satisfying these relations has order $\leq p^{2(n-i)+i-1+j}$,
since $\langle \delta_x,\nu^{p^{i-1}}\rangle$ is normal in $G$ of order $\leq p^{2(n-i)}$ with
$|G/\langle \delta_x,\nu^{p^{i-1}}\rangle|\leq p^{i-1+j}$.
\end{proof}



\noindent{\bf Acknowledgment.} We are very indebted to the referee for a careful reading of the paper and
valuable feedback.


\end{document}